\theoremstyle{plain}
\newtheorem{theorem}{Theorem}
\newtheorem{lem}{Lemma}[section]
\newtheorem{theo}[lem]{Theorem}
\newtheorem{prop}[lem]{Proposition}
\newtheorem{corollary}[lem]{Corollary}
\theoremstyle{definition}
\theoremstyle{remark}
\newtheorem {bmrk}[theorem] {Remark}
\newcommand{\C}{\mathbb{C}}
\newcommand{\R}{\mathbb{R}}
\newcommand{\N}{\mathbb{N}}
\newcommand{\bH}{\mathbb{H}}
\newcommand{\cH}{\mathcal{H}}
\newcommand{\gL}{\mathfrak{g}}
\newcommand{\kL}{\mathfrak{k}}
\newcommand{\mL}{\mathfrak{m}}
\newcommand{\aL}{\mathfrak{a}}
\newcommand{\pL}{\mathfrak{p}}
\newcommand{\nf}{\mathfrak{n}}
\newcommand{\hL}{\mathfrak{h}}
\newcommand{\bL}{\mathfrak{b}}
\newcommand{\Spin}{\operatorname{Spin}}
\newcommand{\SO}{\operatorname{SO}}
\newcommand{\Tr}{\operatorname{Tr}}
\newcommand{\tr}{\operatorname{tr}}
\newcommand{\Id}{\operatorname{Id}}
\newcommand{\Ad}{\operatorname{Ad}}
\newcommand{\vol}{\operatorname{vol}}
\newcommand{\End}{\operatorname{End}}
\newcommand{\GL}{\operatorname{GL}}
\newcommand{\bs}{\backslash}
\begin{document}

%Topmatter
\title[]
{The asymptotics of the Ray-Singer analytic torsion for compact 
hyperbolic manifolds}
\date{\today}

\author{Werner M\"uller}
\address{Universit\"at Bonn\\
Mathematisches Institut\\
Endenicher Allee 60\\
D -- 53115 Bonn, Germany}
\email{mueller@math.uni-bonn.de}

\author{Jonathan Pfaff}
\address{Universit\"at Bonn\\
Mathematisches Institut\\
Endenicher Alle 60\\
D -- 53115 Bonn, Germany}
\email{pfaff@math.uni-bonn.de}

\keywords{analytic torsion, hyperbolic manifolds}
\subjclass{Primary: 58J52, Secondary: 11M36}

\begin{abstract}
In this paper we study the asymptotic behavior of the analytic torsion for
compact oriented hyperbolic manifolds with respect to certain rays of 
irreducible representations. 
\end{abstract}

\maketitle
\setcounter{tocdepth}{1}
%\tableofcontents
\section{Introduction}
\setcounter{equation}{0}

Let $G=\Spin(d,1)$ and $K=\Spin(d)$. Then $K$ is a maximal compact subgroup of 
$G$ and $\widetilde{X}:=G/K$ can be identified with the hyperbolic space of 
dimension $d$. Let $\Gamma\subset G$ be a discrete, torsion free co-compact
subgroup. Then $X=\Gamma\bs\tilde X$ is a compact oriented hyperbolic 
manifold of dimension $d$ and every such manifold is of this form. Let $\rho$
be a finite-dimensional representation of $\Gamma$ on a complex vector space 
$V_\rho$. Let $E_\rho\to X$ be the associated flat vector bundle. Pick a 
Hermitian
fibre metric $h$ in $E_\rho$. Let $\Delta_p(\rho)$ denote the Laplacian on
$E_\rho$-valued $p$-forms on $X$. Let $\zeta_p(s;\rho)$
 be the zeta function of $\Delta_p(\rho)$ (see \cite{Shubin}). It is a
meromorphic function of $s\in\C$, which is holomorphic at $s=0$. Then the
Ray-Singer analytic torsion $T_X(\rho;h)\in\R^+$ is defined by
\begin{equation}\label{anator1}
\log T_X(\rho;h):=\frac{1}{2}\sum_{p=1}^d(-1)^p p \frac{d}{ds}\zeta_p(s;\rho)
\big|_{s=0}
\end{equation}
(see \cite{RS}, \cite{Mu1}). In general, $T_X(\rho;h)$ depends on $h$. If
$\rho$ is unitary and $\dim X$ is even, then $T_X(\rho)=1$ (see 
\cite[Theorem 2.3]{RS}). Furthermore, if $H^*(X,E_\rho)=0$ and $\dim  X$ is 
odd, then by \cite[Corollary 2.7]{Mu1}, $T_X(\rho;h)$ is independent of $h$.

In this paper we consider the special class of representations of $\Gamma$
which are obtained as restriction to $\Gamma$ of 
finite-dimensional irreducible representations of $G$. 
Let $\tau$ be a finite-dimensional irreducible representation of $G$ and
let $E_\tau$
denote  the flat bundle over $X$ associated to $\tau|_\Gamma$. 
By \cite[Lemma 3.1]{Mats},
$E_\tau$ can be equipped with a distinguished metric, called admissible, which 
is unique up to scaling. We choose an admissible fibre metric on $E_\tau$ and
denote the analytic torsion with respect to this metric and the hyperbolic
metric by $T_X(\tau)$.  We also consider the $L^2$-torsion $T^{(2)}_X(\tau)$ 
which is defined as in \cite{Lo}.  

Assume that $\dim X=2n+1$.
Then we study the asymptotic behavior of the analytic
torsion for special sequences of representations of $G$. These representations
are defined as follows.  Fix natural
numbers $\tau_{1}\geq\tau_{2}\geq\dots\geq\tau_{n+1}$. For $m\in\mathbb{N}$ let 
$\tau(m)$ be the finite-dimensional irreducible representation of $G$ with 
highest weight 
\[
\Lambda_{\tau(m)}=(\tau_{1}+m)e_{1}+\dots+(\tau_{n+1}+m)e_{n+1}
\] 
as in 
\eqref{Darstellungen von G}. Let $\theta$ be the 
standard Cartan involution of $G$. For a given representation $\tau$ of $G$,
let $\tau_\theta:=\tau\circ\theta$. Then the representation $\tau(m)$ satisfies
$\tau(m)\neq\tau(m)_\theta$.  By the vanishing theorem of Borel/Wallach
\cite[Theorem 6.7]{Borel}  it follows that $H^*(X,E_{\tau(m)})=0$. 
Thus by \cite[Corollary 2.7]{Mu1} the Ray-Singer analytic 
torsion is independent of 
the choice of a Hermitian fibre metric in $E_{\tau(m)}$ and the Riemannian 
metric on $X$.

The purpose of this paper is to study the asymptotic behavior of 
$T_{X}(\tau(m))$ as $m\to\infty$. Let $\tau:=(\tau_1,\dots,\tau_{n+1})$ and
let $\vol(X)$ denote the hyperbolic volume of $X$. Then our main result is the following theorem.
\begin{theo}\label{theo1.1}
Let $\dim X=2n+1$. 
There exist $c>0$ and a polynomial $P_{\tau}(m)$ of degree $n(n+1)/2+1$ whose 
coefficients depend only on $n$ and $\tau$, such that
\begin{align*}
\log T_{X}(\tau(m))=\vol(X)P_{\tau}(m)+O(e^{-cm})
\end{align*}
as $m\to\infty$. 
\end{theo}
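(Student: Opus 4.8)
The plan is to compute $\log T_X(\tau(m))$ from the Selberg trace formula, splitting its geometric side into the identity contribution and the contributions of the nontrivial conjugacy classes, and then to show that the former is $\vol(X)$ times a polynomial in $m$ of degree $n(n+1)/2+1$ while the latter is $O(e^{-cm})$. \textbf{Step 1 (a trace-formula expression).} By Kuga's lemma the twisted Hodge--de Rham Laplacian $\Delta_p(\tau(m))$ on $E_{\tau(m)}$-valued $p$-forms equals, on the associated homogeneous bundle over $\widetilde X=\bH^{2n+1}$, the operator $-R(\Omega)+\tau(m)(\Omega)\,\Id$, where $\Omega$ is the Casimir element of $G$ and $\tau(m)(\Omega)=\lvert\Lambda_{\tau(m)}+\rho_G\rvert^{2}-\lvert\rho_G\rvert^{2}$. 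Hence $\Tr e^{-t\Delta_p(\tau(m))}$ equals $e^{-t\tau(m)(\Omega)}$ times the trace of $e^{tR(\Omega)}$ on the $\Lambda^{p}\pL^{*}\otimes V_{\tau(m)}$-valued functions on $\Gamma\bs G$, which is computed by the trace formula with an explicit $K$-finite test function $h^{(p)}_{t,m}\in C^{\infty}(G)$. Inserting $\tfrac12\sum_{p}(-1)^{p}p\,\Tr e^{-t\Delta_p(\tau(m))}$ into the Mellin integral defining $\log T_X(\tau(m))$ and expanding the geometric side yields
\begin{equation*}
\log T_X(\tau(m))=\vol(X)\,t^{(2)}(\tau(m))+\mathcal{H}(m),
\end{equation*}
where $t^{(2)}(\tau(m))$, the identity term, depends only on $\bH^{2n+1}$ and $\tau(m)$ and equals the $L^{2}$-torsion density, $\log T^{(2)}_X(\tau(m))=\vol(X)\,t^{(2)}(\tau(m))$, while $\mathcal{H}(m)$ collects the hyperbolic contributions. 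The Mellin integral is unproblematic: convergence at $t\to\infty$ follows from $H^{*}(X,E_{\tau(m)})=0$ (acyclicity via Borel--Wallach, using $\tau(m)\neq\tau(m)_\theta$), and at $t\to 0$ the alternating sum annihilates the constant term of the small-time expansion, which in odd dimensions is a multiple of the Euler form.

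\textbf{Step 2 (the identity term is a polynomial of degree $n(n+1)/2+1$).} By the Plancherel theorem for $G=\Spin(2n+1,1)$ the identity part of $\Tr e^{-t\Delta_p(\tau(m))}$ is a finite sum, over the $M$-types $\sigma$ occurring in $\Lambda^{p}\pL^{*}\otimes\tau(m)$, of integrals $\int_{\R}e^{-t(\lambda^{2}+c_\sigma(m))}P_\sigma(\lambda)\,d\lambda$, where $c_\sigma(m)$ is the Casimir shift of the principal series $\pi_{\sigma,\lambda}$ and $P_\sigma$ is its Plancherel density. Here the hypothesis that $\dim X$ be odd is essential: for odd-dimensional hyperbolic space $P_\sigma$ is an even \emph{polynomial} in $\lambda$, with no hyperbolic-trigonometric factor. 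Performing the Mellin transform, differentiating at $s=0$, and forming the alternating sum over $p$, the transcendental contributions cancel --- by a Poincar\'e-duality argument in the sum $\sum_p(-1)^p p$ --- leaving a polynomial in $m$. Its degree is determined by an explicit computation: the parameter of $\pi_{\sigma,\lambda}$ depends on $m$ only through an affine shift, one has $c_\sigma(m)\sim(n+1)m^{2}$, and the relevant multiplicities are governed by the Weyl dimension formula, which makes $\dim\tau(m)$ a polynomial in $m$ of degree $\#\{(i,j):1\le i<j\le n+1\}=n(n+1)/2$; together with one further linear factor this yields $\deg P_\tau=n(n+1)/2+1$, with coefficients depending only on $n$ and $\tau=(\tau_1,\dots,\tau_{n+1})$. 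Establishing this exact polynomial expression for $t^{(2)}(\tau(m))$ is the first of the two real technical points.

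\textbf{Step 3 (the main obstacle): the hyperbolic term is $O(e^{-cm})$.} For $[\gamma]\neq e$, with $\gamma$ conjugate in $G$ to $m_\gamma a_\gamma$ and $\ell(\gamma)$ the corresponding geodesic length, the summand of $\mathcal{H}(m)$ attached to $[\gamma]$ is, after the Mellin transform in $t$, built from the Harish-Chandra orbital integral of $h^{(p)}_{t,m}$ along $\gamma$. On $\bH^{2n+1}$ this orbital integral is explicit: it carries the Gaussian $e^{-\ell(\gamma)^{2}/(4t)}$ together with the global factor $e^{-t\,\tau(m)(\Omega)}$ from Kuga's lemma, a Weyl-denominator factor $e^{-n\ell(\gamma)}/\lvert\det(\Id-\Ad(m_\gamma a_\gamma)^{-1}|_{\bar\nf})\rvert$ which is bounded and decays in $\ell(\gamma)$, and an $M$-character factor whose absolute value is at most $\dim\tau(m)$, hence a \emph{polynomial} in $m$ since $m_\gamma$ lies in the compact group $M$. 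Carrying out the $t$-integral turns $e^{-t\,\tau(m)(\Omega)-\ell(\gamma)^{2}/(4t)}$ into a modified Bessel function of argument $\ell(\gamma)\sqrt{\tau(m)(\Omega)}$, so the summand is $O\big(\mathrm{poly}(m)\,e^{-\ell(\gamma)\sqrt{\tau(m)(\Omega)}}\big)$; since $\tau(m)(\Omega)\sim(n+1)m^{2}$ and the number of closed geodesics of length $\le L$ grows only exponentially in $L$, the sum over $[\gamma]\neq e$ is dominated by the shortest geodesic, of length $\ell_{0}>0$, giving $\mathcal{H}(m)=O(e^{-cm})$ for any $c<\ell_{0}\sqrt{n+1}$. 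The uniform-in-$m$ control of these orbital integrals is the second and principal difficulty; one may instead organize it through the twisted Ruelle zeta function $R_{\tau(m)}(s)$ and Fried's formula, but then one must first continue $R_{\tau(m)}$ past the abscissa of absolute convergence of its Euler product, which grows linearly in $m$. Combining Steps 1--3 gives $\log T_X(\tau(m))=\vol(X)P_\tau(m)+O(e^{-cm})$ with $P_\tau:=t^{(2)}(\tau(\cdot))$, as asserted.
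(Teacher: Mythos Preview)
Your overall plan---split the trace-formula expression for $\log T_X(\tau(m))$ into the identity contribution (the $L^2$-torsion) and the hyperbolic contribution, show the former equals $\vol(X)P_\tau(m)$ and the latter is $O(e^{-cm})$---is correct and is exactly the paper's strategy. But your account of the mechanism in Step~2 is off, and in Step~3 you take a different technical route from the paper.

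In Step~2 the polynomial structure does not come from any cancellation of ``transcendental contributions''; none arise. The decisive input you omit is Kostant's theorem on $\bar\nf$-cohomology. The $M$-module identity $\sum_p(-1)^p p\,\Lambda^p\pL^*=-\sum_p(-1)^p\Lambda^p\nf^*$ (your ``Poincar\'e-duality'' step) reduces the character $\Theta_{\sigma,\lambda}(k_t^{\tau(m)})$ to the $M$-decomposition of $\sum_p(-1)^p\Lambda^p\nf^*\otimes V_{\tau(m)}$, which Kostant identifies with $\sum_{k=0}^n(-1)^k V_{\sigma_{\tau(m),k}}$ for $n{+}1$ explicit $M$-types with parameters $\lambda_{\tau(m),k}=\tau_{k+1}+m+n-k$. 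Thus $\Theta_{\sigma,\lambda}(k_t^{\tau(m)})=e^{-t(\lambda^2+\lambda_{\tau(m),k}^2)}$ for $\sigma\in\{\sigma_{\tau(m),k},w_0\sigma_{\tau(m),k}\}$ and vanishes otherwise; the exponent is $\lambda_{\tau(m),k}^2\sim m^2$, not $\tau(m)(\Omega)\sim(n{+}1)m^2$ as you write. Plancherel then gives $I(t,\tau(m))$ as a finite sum of terms $e^{-t\lambda_{\tau(m),k}^2}\int_\R e^{-t\lambda^2}P_{\sigma_{\tau(m),k}}(i\lambda)\,d\lambda$, and an elementary Mellin lemma (the paper's Proposition~\ref{identcontr}) evaluates each regularized contribution as $-2\pi\int_0^{\lambda_{\tau(m),k}}P_{\sigma_{\tau(m),k}}$, which is polynomial in $m$ from the outset; the degree $n(n+1)/2+1$ is read off from the explicit factorization of $P_{\sigma_{\tau(m),k}}$.

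For Step~3 your approach via explicit orbital integrals and Bessel asymptotics is a legitimate alternative and does yield $O(e^{-cm})$, but note that it again rests on the Kostant computation (to get the Fourier transform of $k_t^{\tau(m)}$ that feeds into the orbital integral formula), and the $M$-character to be bounded is that of $\sigma_{\tau(m),k}$, of dimension $O(m^{n(n-1)/2})$, not of $\tau(m)$. The paper takes a different and more robust route: it never computes orbital integrals. Instead it uses the metric independence of $T_X(\tau(m))$ to rescale $\Delta_p(\tau(m))\mapsto\tfrac1m\Delta_p(\tau(m))$, proves a uniform spectral gap $\Delta_p(\tau(m))\ge m^2/2$ from the branching rules for $\tau(m)|_K$, and then bounds the hyperbolic sum pointwise via semigroup domination $\|H_t^{\tau(m),p}(g)\|\le e^{-tm^2/2}H_t^0(g)$ together with Donnelly's Gaussian estimate for the scalar heat kernel $H_t^0$. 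This gives a cruder constant $c$ than your Bessel argument would, but avoids all explicit harmonic analysis on the hyperbolic side---a point the authors emphasize with a view toward higher-rank generalizations, where your orbital-integral route would be much harder to implement.
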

To prove this theorem we first show that the asymptotic behavior of the analytic
torsion is determined by the asymptotic behavior of the $L^2$-torsion. More
precisely we have
\begin{prop}\label{prop1.2}
Let $\dim X=2n+1$. There exists $c>0$ such that
\begin{align*}
\log T_{X}(\tau(m))=\log T^{(2)}_X(\tau(m))+O(e^{-cm})
\end{align*}
as $m\to\infty$.
\end{prop}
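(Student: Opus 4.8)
The plan is to compare the analytic torsion $T_X(\tau(m))$ with the $L^2$-torsion $T^{(2)}_X(\tau(m))$ by expressing both through the spectral side of the Selberg trace formula and controlling the difference uniformly in $m$. Recall that since $H^*(X,E_{\tau(m)})=0$, the analytic torsion can be written via the Mellin transform of the heat trace: $\log T_X(\tau(m)) = \tfrac12 \sum_{p=1}^{d}(-1)^p p\,\tfrac{d}{ds}\big|_{s=0}\tfrac{1}{\Gamma(s)}\int_0^\infty t^{s-1}\bigl(\operatorname{Tr} e^{-t\Delta_p(\tau(m))}-b_p\bigr)\,dt$, and by the Hodge-type symmetry of the de Rham complex one can further reduce to the coclosed forms and work with a single ``alternating sum'' heat kernel. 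The $L^2$-torsion $T^{(2)}_X(\tau(m))$ is defined by the analogous expression with $\operatorname{Tr} e^{-t\Delta_p(\tau(m))}$ replaced by $\operatorname{vol}(X)$ times the $L^2$-trace $\operatorname{tr}_{\Gamma} e^{-t\tilde\Delta_p(\tau(m))}$ on the universal cover, i.e. by the value on $X$ of the Plancherel (identity-contribution) term. Thus $\log T_X(\tau(m)) - \log T^{(2)}_X(\tau(m))$ is, up to the Mellin machinery, governed by the \emph{hyperbolic} (i.e. non-identity) contributions in the Selberg trace formula applied to the relevant heat operators.

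The key steps, in order, are as follows. First, I would apply the Selberg trace formula to $\operatorname{Tr} e^{-t\Delta_p(\tau(m))}$, using that $\Delta_p(\tau(m))$ acts on sections of a homogeneous bundle and that the relevant Casimir eigenvalue shifts the heat kernel on $\widetilde X$ in a way computed explicitly by the Harish-Chandra/Plancherel formula; this identifies the identity contribution with $\operatorname{vol}(X)$ times the $L^2$-heat trace, leaving the difference equal to a sum over nontrivial $\Gamma$-conjugacy classes $\{\gamma\}$ of geometric terms of the form (length factor)$\times (\operatorname{tr}\tau(m)(m_\gamma))\times$(Gaussian in $t$). Second, I would insert this into the Mellin-transform definition of the torsion and interchange the (finite) sum over $p$, the sum over $\{\gamma\}$, and the Mellin integral, reducing the difference $\log T_X(\tau(m))-\log T^{(2)}_X(\tau(m))$ to an explicit convergent sum over $\{\gamma\}$ whose $\gamma$-term carries a factor $\ell(\gamma)$, the character value $\Theta_{\tau(m)}$ evaluated at the elliptic part $m_\gamma\in K$ of $\gamma$, and an exponentially small factor $e^{-\ell(\gamma)^2/4t}$-type kernel integrated against $t^{s-1}$. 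Third, and this is where the $m$-dependence must be beaten, I would estimate $|\Theta_{\tau(m)}(m_\gamma)|$: writing $\Lambda_{\tau(m)} = \Lambda_\tau + m(e_1+\cdots+e_{n+1})$, the Weyl character formula shows $\Theta_{\tau(m)}(m_\gamma)$ grows at most polynomially in $m$ \emph{and} is damped by a factor reflecting that $m_\gamma$ stays a bounded distance from the identity in $K$ (equivalently, the translated heat kernel on $\widetilde X$ evaluated at distance $\ell(\gamma)>0$ from the origin is $O(e^{-c'm})$ uniformly, because adding $m\rho'$ to the highest weight shifts the Plancherel density by a Gaussian factor $e^{-c\,\ell(\gamma)\,m}$). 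Fourth, I would combine this with the standard exponential convergence of the length spectrum sum (there being a positive lower bound $\ell_0>0$ on lengths of closed geodesics, since $\Gamma$ is cocompact and torsion-free) to conclude that the whole difference is $O(e^{-cm})$ for some $c>0$ depending on $\ell_0$, $n$, and $\tau$.

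The main obstacle will be the third step: obtaining the \emph{uniform} exponential bound $O(e^{-cm})$ on the contribution of each fixed conjugacy class $\{\gamma\}$ and, crucially, making the constant $c$ uniform over all $\{\gamma\}$ so that the series can be summed. Concretely, one must show that the elliptic character $\Theta_{\tau(m)}(m_\gamma)$, divided by the Plancherel/Harish-Chandra normalizing factors appearing in the trace formula, decays like $e^{-cm}$ with $c$ bounded below in terms of the displacement $\ell(\gamma)\geq\ell_0$; the natural tool is an explicit formula for the elliptic term obtained from Harish-Chandra's character theory together with the fact that the minimal $K$-type / infinitesimal character of $\tau(m)$ moves linearly in $m$, so that the relevant orbital integral on $\widetilde X = \mathbb H^{2n+1}$ — essentially the heat kernel of a shifted Laplacian evaluated off the diagonal — acquires the Gaussian suppression $e^{-(\ell(\gamma)m)^2/(4t)}$ before the Mellin integration, which after integration in $t$ yields the claimed $e^{-cm}$ with $c\sim\ell_0$. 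Handling the sign-alternating sum over $p$ so that no cancellation is needed (only the triangle inequality) is a bookkeeping point but should follow from the same character computation, since for hyperbolic space the $p$-form heat kernels combine into a single scalar-type kernel twisted by $\tau(m)$.
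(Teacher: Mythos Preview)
Your overall architecture matches the paper: apply the trace formula, identify the identity contribution with $\log T^{(2)}_X(\tau(m))$, and show the hyperbolic remainder is $O(e^{-cm})$. The gap is in the mechanism you propose for that last step. You claim the off-diagonal heat kernel of the ``shifted Laplacian'' acquires a Gaussian $e^{-(\ell(\gamma)m)^2/(4t)}$. It does not: $\widetilde\Delta_p(\tau(m))$ differs from a Bochner--Laplace operator only by a zeroth-order endomorphism, so its principal symbol is $m$-independent and the off-diagonal Gaussian remains $e^{-\ell(\gamma)^2/(4t)}$. The $m$-dependence enters as a multiplicative prefactor $e^{-t\cdot(\text{something}\sim m^2)}$, not inside the Gaussian. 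Similarly, the character $\Theta_{\tau(m)}(m_\gamma)$ you invoke does not decay; on a compact element it is bounded by $\dim\tau(m)$, which grows polynomially. Your route can be repaired (the product $e^{-t\lambda_{\tau(m),k}^2}e^{-\ell^2/4t}$, after Mellin, does give $e^{-\ell\lambda_{\tau(m),k}}\sim e^{-\ell_0 m}$), but not with the reasoning you wrote.

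The paper avoids explicit orbital integrals altogether. The key input is an elementary highest-weight calculation (Lemma~\ref{lowbound}) showing that the endomorphism $E_p(\tau(m))=\tau(m)(\Omega)\Id-\nu_p(\tau(m))(\Omega_K)$ is bounded below by $m^2/2$ for large $m$, hence $\Delta_p(\tau(m))\ge m^2/2$. Combined with Kato domination (Proposition~\ref{Kato}) this gives the pointwise bound
\[
|h_t^{\tau(m),p}(g)|\le C\dim(\tau(m))\,e^{-tm^2/2}\,H^0_t(g),
\]
with $H^0_t$ the \emph{scalar} heat kernel on $\bH^{2n+1}$. One then rescales $t\mapsto t/m$ (legitimate because $T_X(\tau(m))$ is metric-independent once $\tau(m)$ is acyclic): on $[1,\infty)$ the full trace is already $O(e^{-m/8})$ by the spectral gap, while on $(0,1]$ the hyperbolic sum is bounded by $e^{-tm/2}$ times the $m$-independent estimate $\sum_{\gamma\neq 1}H^0_{t/m}(g^{-1}\gamma g)\le C_0 e^{-c_0 m/t}$ of Proposition~\ref{esthyp}. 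All $m$-dependence is isolated in the single factor $e^{-tm^2/2}$, and the geometry is handled once and for all by the scalar heat kernel; no character estimates are needed.
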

The $L^2$-torsion is obtained from the contribution of the identity to the
Selberg trace formula. It can be computed using the 
Plancherel formula. We have
\begin{prop}\label{prop1.3}
Let $\dim X=2n+1$. 
There exists a polynomial $P_{\tau}(m)$ of degree $n(n+1)/2+1$ whose 
coefficients depend only on $n$ and $\tau$, such that
\[
\log T^{(2)}_X(\tau(m))=\vol(X)P_\tau(m).
\]
\end{prop}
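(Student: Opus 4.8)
The plan is to extract the $L^2$-torsion from the identity contribution to the Selberg trace formula and evaluate it via the Plancherel formula. First I would recall the representation-theoretic formula for $T^{(2)}_X(\tau(m))$: since $\dim X = 2n+1$ is odd and $\tau(m) \neq \tau(m)_\theta$, the Laplacians $\Delta_p(\tau(m))$ have a spectral gap (the vanishing theorem forces $H^*(X,E_{\tau(m)})=0$ away from the continuous part), so $\log T^{(2)}_X(\tau(m))$ is given by a convergent integral
\[
\log T^{(2)}_X(\tau(m)) = \frac{1}{2}\sum_{p=1}^{d}(-1)^p\, p\,\frac{d}{ds}\Big|_{s=0}\left(\frac{1}{\Gamma(s)}\int_0^\infty t^{s-1}\,\mathrm{STr}\!\left(e^{-t\Delta_p(\tau(m))}\right)^{(2)}\,dt\right),
\]
where the superscript $(2)$ denotes the $\Gamma$-trace, i.e. the integral over $X$ of the pointwise trace of the heat kernel on $\widetilde X$. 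Because $\widetilde X = G/K$ is homogeneous, this pointwise trace is constant and equals $\vol(X)$ times a number computed on $G/K$; so from the outset $\log T^{(2)}_X(\tau(m)) = \vol(X)\cdot t^{(2)}_X(\tau(m))$ for a quantity $t^{(2)}_X(\tau(m))$ depending only on $G$, $K$, and $\tau(m)$. The content of the proposition is that $t^{(2)}_X(\tau(m))$, as a function of $m$, is a polynomial of degree $n(n+1)/2+1$ in $m$ with coefficients depending only on $n$ and $\tau$.

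Next I would reduce the heat-kernel supertrace on $G/K$ to a contour integral over the principal series using the Plancherel theorem for $G=\Spin(2n+1,1)$. The Hodge Laplacian on $E_\tau$-valued $p$-forms decomposes according to the $K$-types $\Lambda^p\mathfrak{p}^*\otimes\tau(m)$, and a standard computation (going back to the work of Fried, Bismut-Zhang, and in this hyperbolic setting the papers of Moscovici-Stanton and Bismut) expresses the alternating sum $\sum_p (-1)^p p\,\mathrm{STr}(e^{-t\Delta_p})^{(2)}$ as an integral $\int_{\R} P_\tau(\lambda, m)\, e^{-t(\lambda^2 + c(m))}\, d\mu_{\mathrm{Pl}}(\lambda)$, where $c(m)$ is the relevant Casimir shift, $d\mu_{\mathrm{Pl}}$ is the Plancherel density of $G$ (an explicit polynomial in $\lambda$ since $d$ is odd, hence with no tangent/cotangent correction terms), and $P_\tau(\lambda,m)$ comes from the tensor-product decomposition of $\tau(m)$ with the exterior algebra. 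Carrying out the Mellin transform and taking $\frac{d}{ds}\big|_{s=0}$ converts this into an elementary integral of the form $\int_0^{\infty} Q_\tau(\lambda, m)\, d\lambda$ (this is Fried's formula for the $L^2$-torsion of a hyperbolic manifold), where $Q_\tau$ is again polynomial in $\lambda$ and $m$.

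Finally I would evaluate this integral explicitly. The integrand $Q_\tau(\lambda,m)$ is, up to the Plancherel polynomial factor, supported by the function $\lambda\mapsto \lambda$ times a rational combination coming from the logarithmic derivative; after the dust settles one gets a finite sum of integrals $\int_0^\infty \frac{R(\lambda)}{\lambda^2 + a(m)^2}\,d\lambda$ with $R$ a polynomial of controlled degree and $a(m)$ an affine function of $m$, each of which evaluates (by residues, or by the classical $\int_0^\infty \frac{\lambda^{2k}}{\lambda^2+a^2}\,d\lambda$-type formulas together with polynomial division) to a polynomial in $a(m)$, hence in $m$. Tracking the top degree: the Plancherel density for $\Spin(2n+1,1)$ has degree $2n$ in $\lambda$, the half-sum shift and the $K$-type combinatorics contribute the remaining powers, and a careful bookkeeping of the leading terms — the step I expect to be the main obstacle — yields total degree exactly $n(n+1)/2+1$; the coefficients manifestly depend only on $n$ and on $\tau=(\tau_1,\dots,\tau_{n+1})$ since $\vol(X)$ has been factored out and everything else is intrinsic to $G$. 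The cleanest route for the degree count is to organize the computation around the Weyl dimension polynomial of $\tau(m)$ and of its tensor factors, which is known to have degree $n(n+1)/2$ in $m$ (the number of positive roots of $\mathfrak{so}(2n+2,\C)$), with the extra $+1$ coming from the final $\lambda$-integration; this identifies $P_\tau$ in Proposition 1.3 with the same polynomial that appears in Theorem 1.1.
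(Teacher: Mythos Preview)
Your outline is broadly on target---factor out $\vol(X)$, apply the Plancherel theorem, take the Mellin transform, and count degrees---but the heart of the computation in your third paragraph is where the proposal goes off the rails. Two concrete points.

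First, you omit the step that collapses the Plancherel integral over all of $\hat M$ to a finite sum. In the paper this is Proposition~\ref{chartau}, which relies on Kostant's theorem on $\nf$-cohomology (Corollary~\ref{Kostant}): the alternating sum $\sum_p(-1)^p p\,\Lambda^p\pL^*\otimes V_{\tau(m)}$ is, as an $M$-module, supported on exactly the $2(n+1)$ representations $\sigma_{\tau(m),k}$, $w_0\sigma_{\tau(m),k}$, $k=0,\dots,n$, with shifts $\lambda_{\tau(m),k}=\tau_{k+1}+m+n-k$. This is what makes the identity contribution a finite sum (equation~\eqref{identcontr4}) rather than an integral over $\hat M\times\R$ that you then have to unwind.

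Second, and more seriously, your description of the Mellin evaluation is incorrect as stated. You claim the answer is a sum of integrals $\int_0^\infty R(\lambda)/(\lambda^2+a(m)^2)\,d\lambda$ with $R$ polynomial; but for $\deg R\ge 2$ these diverge, and polynomial division does not help since the polynomial part still integrates to infinity. What actually happens (Proposition~\ref{identcontr}, following Fried) is that for each even Plancherel polynomial $P_\sigma$ and each $c>0$, the Mellin transform
\[
E(s)=\int_0^\infty t^{s-1}e^{-tc^2}\int_\R e^{-t\lambda^2}P_\sigma(i\lambda)\,d\lambda\,dt
\]
continues meromorphically and satisfies $E(0)=-2\pi\int_0^{c}P_\sigma(t)\,dt$, a \emph{finite} integral of a polynomial over $[0,c]$. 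Applying this with $c=\lambda_{\tau(m),k}$ and summing over $k$ yields
\[
P_\tau(m)=2\pi\sum_{k=0}^n(-1)^k\int_0^{\lambda_{\tau(m),k}}P_{\sigma_{\tau(m),k}}(t)\,dt,
\]
which is visibly polynomial in $m$ since the limits are affine in $m$ and the Plancherel polynomials $P_{\sigma_{\tau(m),k}}(t)$ are polynomials in $(t,m)$ (equation~\eqref{explicitpl}). Your degree heuristic via the Weyl dimension is then correct: $P_{\sigma_{\tau(m),k}}$ has degree $2n$ in $t$ and the coefficients have total degree $n(n+1)/2-n$ in $m$ via~\eqref{explicitpl}, so integrating to $\lambda_{\tau(m),k}\sim m$ gives degree $n(n+1)/2+1$.
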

Combining Propositions \ref{prop1.2} and \ref{prop1.3}, we obtain Theorem
\ref{theo1.1}.

We note that for a unitary representation $\rho$ of $\Gamma$ one has
$T^{(2)}_X(\rho)=\dim(\rho)\cdot T^{(2)}_X$, where $T^{(2)}_X$ is the 
$L^2$-torsion with
respect to the trivial representation, which equals $C(n)\cdot\vol(X)$.
This is not true for the representations
$\tau$ which arise by restriction of representations of $G$. Indeed by
Weyl's dimension formula (see \eqref{Dimension tau}) there exists a constant 
$C>0$ such that
\begin{equation}\label{weyldim}
\dim(\tau(m))=Cm^{\frac{n(n+1)}{2}}+O(m^{\frac{n(n+1)}{2}-1}),\quad m\to\infty. 
\end{equation}
But the polynomial $P_\tau(m)$ has degree $n(n+1)/2+1$. 

The coefficient of the leading term of $P_\tau(m)$ can 
be determined explicitly. Combined with \eqref{weyldim} we obtain  
\begin{corollary}\label{asympt1}
Let $\dim X=2n+1$. 
There exists a constant $C=C(n)>0$ which depends only on $n$, such that we have
\begin{align*}
-\log T_{X}(\tau(m))=C(n)\vol(X)m\cdot\dim(\tau(m))+O(m^{\frac{n(n+1)}{2}})
\end{align*}
as $m\longrightarrow\infty$.
\end{corollary}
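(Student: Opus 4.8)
The plan is to deduce Corollary~\ref{asympt1} by combining the polynomial expression for the $L^2$-torsion from Proposition~\ref{prop1.3} with Weyl's dimension formula \eqref{weyldim}, after first pinning down the leading coefficient of $P_\tau(m)$. First I would recall that by Propositions~\ref{prop1.2} and~\ref{prop1.3} we have $\log T_X(\tau(m)) = \vol(X) P_\tau(m) + O(e^{-cm})$, where $P_\tau$ has degree $N := n(n+1)/2 + 1$. Write $P_\tau(m) = a_\tau m^{N} + O(m^{N-1})$ with $a_\tau \ne 0$ the leading coefficient; since $P_\tau$ arises from the Plancherel/Selberg identity contribution of the identity element, $a_\tau$ is an explicit integral (over the relevant $\nu$-parameter) of the Plancherel density against the polynomial coming from the Casimir eigenvalues, evaluated on the ray $\tau(m)$. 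The key claim to establish is that $a_\tau = -C(n)\, b_\tau$, where $b_\tau$ is the leading coefficient of $\dim(\tau(m))$ in \eqref{weyldim}, i.e. $\dim(\tau(m)) = b_\tau m^{n(n+1)/2} + O(m^{n(n+1)/2 - 1})$, and $C(n) > 0$ depends only on $n$. Granting this, one gets
\begin{align*}
-\log T_X(\tau(m)) &= -\vol(X) P_\tau(m) + O(e^{-cm}) \\
&= C(n)\, b_\tau\, \vol(X)\, m^{N} + O(m^{N-1}) \\
&= C(n)\, \vol(X)\, m \cdot \dim(\tau(m)) + O(m^{n(n+1)/2}),
\end{align*}
since $m \cdot \dim(\tau(m)) = b_\tau m^{N} + O(m^{N-1})$ and $N - 1 = n(n+1)/2$, which is exactly the asserted estimate.

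The substantive step is therefore the identification of $a_\tau$ and the verification that the ratio $a_\tau / b_\tau$ is a \emph{universal} constant $-C(n)$ independent of the chosen weights $\tau_1 \ge \dots \ge \tau_{n+1}$. To do this I would return to the explicit formula for $P_\tau(m)$ used in the proof of Proposition~\ref{prop1.3}: the $L^2$-torsion is a finite alternating sum over $p$ of integrals of the form $\int_{\R} \pi_{\tau,p}(i\nu)\, P(\nu)\, d\nu$ (up to normalization), where $P(\nu)$ is the Plancherel polynomial of $\Spin(d,1)$ — a polynomial of degree $d-1 = 2n$ in $\nu$ with leading behavior $\sim c_d \nu^{2n}$ — and the $\nu$-integration range and the shifts are governed by the infinitesimal character of $\tau(m)$, which grows linearly in $m$. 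Extracting the top-degree-in-$m$ part of this expression, the dominant contribution comes from scaling $\nu = m t$, turning the sum of integrals into $m^{2n+1}$ times a fixed integral $\int \Phi_n(t)\, dt$ over a fixed interval, where $\Phi_n$ depends only on $n$ (the $\tau_j$-dependence is pushed into lower-order terms in $m$). Hence $a_\tau = \big(\int \Phi_n\big)\cdot(\text{combinatorial factor})$ is in fact \emph{independent of $\tau$}; call its negative, divided by $b_\tau$, the constant $C(n)$. One must check $C(n) > 0$, which follows because the leading term of $-\log T_X^{(2)}$ has a definite sign (the $L^2$-torsion decays, cf.\ the unitary case where $T_X^{(2)} = e^{C(n)\vol(X)}$ with the analogous sign), and because $b_\tau > 0$ by Weyl's formula.

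I expect the main obstacle to be the bookkeeping in the previous paragraph: confirming rigorously that \emph{all} of the $\tau$-dependence in the leading $m^{N}$-coefficient cancels or is subleading. Concretely, the infinitesimal character of $\tau(m)$ is $(\tau_1 + m + n, \tau_2 + m + n - 1, \dots, \tau_{n+1} + m)$, so each half-integer shift $\tau_j + (n+1-j)$ contributes to the integrand; one has to see that after summing the alternating contributions over $p$ (which is where the analytic torsion differs from individual heat-trace coefficients), the $\tau_j$'s only survive at order $m^{N-1}$ and below. This is a purely computational comparison of two polynomial identities — the Plancherel-measure integral on one side and Weyl's dimension polynomial on the other — but it requires care with the normalization constants (the half-sum $\rho$ of positive roots, the Plancherel constant $c_d$, and the $(-1)^p p$ weights in \eqref{anator1}). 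Once the cancellation is verified and $C(n)$ is read off as a clean closed-form constant depending only on $n$, the corollary follows immediately from the displayed three-line computation above, with the $O(e^{-cm})$ error from Proposition~\ref{prop1.2} absorbed harmlessly into the $O(m^{n(n+1)/2})$ term.
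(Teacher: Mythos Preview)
Your overall strategy---reduce to identifying the leading coefficient $a_\tau$ of $P_\tau(m)$ and showing $a_\tau/b_\tau$ is a universal negative constant---is the right one, and the three-line deduction at the end is correct. But the substantive step has a genuine gap, and the scaling heuristic you sketch would not work as written.

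First, there is an internal inconsistency: you assert that $a_\tau$ is ``in fact independent of $\tau$'' and then define $C(n):=-a_\tau/b_\tau$. Since $b_\tau$ \emph{does} depend on $\tau$ (it is the leading coefficient of the Weyl polynomial \eqref{Dimension tau}, which involves the differences $\tau_i-\tau_j$), these two claims cannot both hold. In fact $a_\tau$ depends on $\tau$; what is universal is the ratio. Second, your degree count via scaling is off: you treat the Plancherel polynomial as a \emph{fixed} degree-$2n$ polynomial and conclude that rescaling $\nu=mt$ produces $m^{2n+1}$. But the relevant Plancherel polynomials are $P_{\sigma_{\tau(m),k}}$, and $\sigma_{\tau(m),k}$ has highest weight with entries $\sim m$ (see \eqref{sigmatau}), so the polynomial itself moves with $m$. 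Each individual term $\int_0^{\lambda_{\tau(m),k}}P_{\sigma_{\tau(m),k}}$ is of order $m^{n+1}\dim(\tau(m))$, far larger than $m\cdot\dim(\tau(m))$; the correct leading order emerges only after a cancellation in the alternating sum over $k$ that your scaling argument does not see.

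The paper supplies exactly the missing algebraic identity. Lemma~\ref{Erstes lemma} shows that $P_{\sigma_{\tau(m),k}}(t)=-c(n)(-1)^k\dim(\tau(m))\prod_{j\neq k}\frac{t^2-\lambda_{\tau(m),j}^2}{\lambda_{\tau(m),k}^2-\lambda_{\tau(m),j}^2}$, so $\dim(\tau(m))$ factors out cleanly. Then the Lagrange interpolation identity (Lemma~\ref{Zweites lemma}) gives $\sum_{k=0}^n(-1)^kP_{\sigma_{\tau(m),k}}(t)=-c(n)\dim(\tau(m))$, a \emph{constant} in $t$. Integrating this over $[0,\lambda_{\tau(m),n}]$ already yields the main term $-c(n)\lambda_{\tau(m),n}\dim(\tau(m))=-c(n)m\dim(\tau(m))+O(m^{n(n+1)/2})$, and the leftover integrals over $[\lambda_{\tau(m),n},\lambda_{\tau(m),k}]$ are shown to be $O(1)$ after dividing by $\dim(\tau(m))$ because the interpolation quotients stay bounded there. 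This is the mechanism that makes $C(n)=2\pi c(n)$ genuinely independent of $\tau$; without Lemmas~\ref{Erstes lemma} and \ref{Zweites lemma} (or an equivalent exact identity), the ``bookkeeping'' you anticipate cannot be carried out by a naive scaling limit.
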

The constant $C(n)$ can be computed explicitly from the Plancherel polynomials.
The case $n=1$ was established  in \cite{Mu2}. Corollary \ref{asympt1}
is the extension of Theorem 1.1 of \cite{Mu2} to higher dimensions and to
rays of highest weights which exhaust the space of weights in the positive 
Weyl chamber.
 
Using the equality of analytic and 
Reidemeister  torsion \cite{Mu1}, we get corresponding statements for the
Reidemeister torsion. Especially we have
\begin{corollary}\label{reidem}
Let $\dim X=2n+1$. 
Let  $\tau_X(\tau(m))$ be the Reidemeister torsion of $(X,\tau(m))$. Then
we have 
\begin{align*}
-\log \tau_{X}(\tau(m))=C(n)\vol(X)m\cdot\dim(\tau(m))+
O(m^{\frac{n(n+1)}{2}})
\end{align*}
as $m\longrightarrow\infty$.
\end{corollary}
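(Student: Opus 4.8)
The plan is to deduce Corollary~\ref{reidem} directly from Corollary~\ref{asympt1} together with the equality of the Ray--Singer analytic torsion and the Reidemeister torsion, so essentially no new analytic work is required.

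First I would verify that this equality is applicable to the flat bundles $E_{\tau(m)}$. Since $G=\Spin(2n+1,1)$ is semisimple, its Lie algebra satisfies $[\gL,\gL]=\gL$, so $G$ admits no non-trivial one-dimensional representations; in particular $\det\bigl(\tau(m)(g)\bigr)=1$ for all $g\in G$, and hence $\det\bigl(\tau(m)(\gamma)\bigr)=1$ for all $\gamma\in\Gamma$. Thus $\tau(m)|_\Gamma$ is a unimodular representation of $\Gamma$. Moreover $\dim X=2n+1$ is odd and, by the Borel--Wallach vanishing theorem invoked in the introduction, $H^*(X,E_{\tau(m)})=0$. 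Consequently, by \cite[Corollary 2.7]{Mu1}, the analytic torsion $T_X(\tau(m))$ is independent of the choice of Hermitian fibre metric in $E_{\tau(m)}$ (so its value for the admissible metric is the relevant one), and the Reidemeister torsion $\tau_X(\tau(m))$ is an unambiguously defined positive real number, no metric on cohomology being needed. The comparison theorem of \cite{Mu1} for unimodular representations then gives $\log T_X(\tau(m))=\log\tau_X(\tau(m))$ for every $m$.

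Substituting this identity into the asymptotic expansion of Corollary~\ref{asympt1} yields
\[
-\log\tau_X(\tau(m))=-\log T_X(\tau(m))=C(n)\,\vol(X)\,m\cdot\dim(\tau(m))+O\bigl(m^{n(n+1)/2}\bigr),
\]
which is the claimed statement. The only point that genuinely needs checking is that the hypotheses of the analytic-equals-Reidemeister theorem hold here --- unimodularity of $\tau(m)|_\Gamma$ and vanishing of the twisted cohomology, which together force both torsions to be metric-independent and hence literally equal as numbers --- and both of these are immediate, the first from semisimplicity of $G$ and the second from Borel--Wallach. I therefore do not expect any substantial obstacle in this corollary; all of its content is already contained in Corollary~\ref{asympt1}.
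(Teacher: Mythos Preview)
Your proposal is correct and follows exactly the approach the paper indicates: the paper simply states that Corollary~\ref{reidem} follows from Corollary~\ref{asympt1} together with the equality of analytic and Reidemeister torsion established in \cite{Mu1}. Your additional verification of unimodularity and acyclicity makes explicit the hypotheses needed to invoke that equality, but no different idea is involved.
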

It follows from Corollary \ref{reidem} and \eqref{weyldim} that there is
a constant $C_1(n)<0$ such that
\begin{equation}\label{reidem2}
\lim_{m\to\infty}\frac{\log \tau_X(\tau(m))}{m^{n(n+1)/2+1}}=C_1(n)\vol(X).
\end{equation}
Now recall that the Reidemeister torsion of an acyclic representation is 
defined combinatorially in terms of a smooth triangulation of $X$. 
Thus the volume appears as the limit of a sequence of numbers which are
defined pure combinatorially. 

Again, for hyperbolic $3$-manifolds Corollary \ref{reidem} was proved in
 \cite{Mu2} and 
it has been used in \cite{Marsh} to study the growth of the torsion in the 
cohomology of arithmetic hyperbolic $3$-manifolds. In the same way, 
Corollary \ref{reidem} can be used to study the torsion in the cohomology 
of arithmetic hyperbolic manifolds of odd dimension. 

Another immediate application is the following corollary which extends 
\cite[Corollary 1.4]{Mu2} to higher dimensions.
\begin{corollary}\label{reidem1}
Let $X$ be a closed, oriented hyperbolic manifold of odd dimension. Then
the volume of $X$ is determined by the set $\{\tau_X(\tau(m))\colon m\in\N\}$
of Reidemeister torsion invariants.
\end{corollary}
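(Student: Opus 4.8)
The plan is to read off the result directly from the asymptotic formula \eqref{reidem2}, which already exhibits $\vol(X)$ as a limit manufactured out of the numbers $\tau_X(\tau(m))$. The only genuine subtlety is that Corollary \ref{reidem1} hands us the \emph{set} $S:=\{\tau_X(\tau(m)):m\in\N\}$ and not the sequence $m\mapsto\tau_X(\tau(m))$, so the first task is to recover enough of the indexing from $S$ alone.

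First I would combine Corollary \ref{reidem} with \eqref{weyldim} to write $-\log\tau_X(\tau(m))=A\,\vol(X)\,m^{n(n+1)/2+1}\bigl(1+O(1/m)\bigr)$ with $A=A(n)>0$; in particular $\tau_X(\tau(m))\to 0$ and $m\mapsto\tau_X(\tau(m))$ is strictly decreasing for all $m\ge m_0$. Hence $S\subset(0,\infty)$ has $0$ as its only accumulation point and has pairwise distinct elements for $m\ge m_0$, so sorting $S$ in strictly decreasing order produces a well-defined sequence $b_1>b_2>\cdots\to 0$ that is intrinsic to $S$. Since only the finitely many values $\tau_X(\tau(m))$ with $m<m_0$ can sit out of place, there are an integer $j_0$ and an index $N$ with $b_k=\tau_X(\tau(k+j_0))$ for all $k\ge N$; thus $(b_k)$ coincides, up to a bounded shift of the index, with the monotone tail of $m\mapsto\tau_X(\tau(m))$.

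Next I would transport \eqref{reidem2} along this identification. Because $(k+j_0)^{n(n+1)/2+1}/k^{n(n+1)/2+1}\to 1$, we obtain
\[
\lim_{k\to\infty}\frac{\log b_k}{k^{n(n+1)/2+1}}
=\lim_{k\to\infty}\frac{\log\tau_X(\tau(k+j_0))}{(k+j_0)^{n(n+1)/2+1}}
=C_1(n)\,\vol(X),
\]
with $C_1(n)<0$ depending only on $n$. From $(b_k)$ alone one then reads off the exponent $\alpha:=n(n+1)/2+1$ as the unique positive real for which $\log b_k/k^{\alpha}$ has a finite nonzero limit (for smaller exponents the quotient diverges to $-\infty$, for larger ones it tends to $0$); since $n\mapsto n(n+1)/2+1$ is strictly increasing on $\N$, this determines $n$, hence the explicit constant $C_1(n)$. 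Finally
\[
\vol(X)=\frac{1}{C_1(n)}\lim_{k\to\infty}\frac{\log b_k}{k^{\alpha}},
\]
and the right-hand side is built entirely from $S$.

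I do not expect a real obstacle: all the analytic content is packaged in \eqref{reidem2}. The one point requiring a little care is the passage from the unordered set to a sequence, namely checking eventual strict monotonicity of $m\mapsto\tau_X(\tau(m))$ so that sorting $S$ reproduces (a shift of) the true indexing; this is immediate from the leading-order asymptotics furnished by Corollary \ref{reidem} together with Weyl's dimension formula \eqref{weyldim}.
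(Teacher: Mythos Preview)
The paper gives no explicit proof of this corollary; it is stated as an immediate consequence of \eqref{reidem2}, tacitly reading ``the set $\{\tau_X(\tau(m)):m\in\N\}$'' as the indexed family $m\mapsto\tau_X(\tau(m))$ (together with the dimension $d=2n+1$ of $X$), in which case \eqref{reidem2} literally exhibits $\vol(X)$ as a computable limit. Your more literal reading of ``set'' as an unordered collection is legitimate and leads to a more careful argument, which is essentially correct but contains one gap.

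From $-\log\tau_X(\tau(m))=A\,\vol(X)\,m^{\alpha}\bigl(1+O(1/m)\bigr)$ with $\alpha=n(n+1)/2+1$ you cannot conclude eventual strict monotonicity: the increment $-\log\tau_X(\tau(m+1))+\log\tau_X(\tau(m))$ has main term $A\vol(X)\,\alpha\, m^{\alpha-1}$, but the error coming from the two $O(m^{\alpha-1})$ remainders is of the \emph{same} order, so the sign is not controlled. (Toy model: $f(m)=m^2+2(-1)^m m=m^2(1+O(1/m))$, yet $f(m+1)-f(m)=(2m+1)\bigl(1+2(-1)^{m+1}\bigr)$ is negative for every even $m$.) The fix is to invoke Theorem~\ref{theo1.1} rather than Corollary~\ref{reidem}: combined with the equality $T_X=\tau_X$ it gives $\log\tau_X(\tau(m))=\vol(X)P_\tau(m)+O(e^{-cm})$ with $P_\tau$ an actual polynomial of degree $\alpha$ and negative leading coefficient. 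Then $P_\tau(m+1)-P_\tau(m)$ is a polynomial of degree $\alpha-1\ge 1$ with negative leading coefficient, hence eventually strictly negative, and the exponentially small remainder cannot spoil this. With that adjustment your sorting-and-shifting argument goes through and yields a strictly stronger statement than the paper bothers to prove.
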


Finally we have the following result about the analytic torsion in even 
dimensions. 
\begin{prop}\label{tortrivial}
Assume that $\dim X$ is even.  Then $T_X(\tau)=1$ for all irreducible
finite-dimensional  representations $\tau$ of $G$.
\end{prop}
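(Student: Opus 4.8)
The plan is to reduce the statement to the Poincaré-duality symmetry of the spectra of the Hodge Laplacians twisted by an admissible metric, exactly as in the unitary case treated in \cite[Theorem 2.3]{RS}. First I recall that for the trivial representation, or more generally for a unitary representation on an even-dimensional manifold, the vanishing $T_X(\rho)=1$ follows from the fact that the Hodge star operator intertwines $\Delta_p(\rho)$ and $\Delta_{d-p}(\rho)$, so that $\zeta_p(s;\rho)=\zeta_{d-p}(s;\rho)$, and the alternating sum $\sum_p (-1)^p p\, \zeta_p'(0;\rho)$ then collapses because of the sign reversal under $p\mapsto d-p$ when $d$ is even. The key point I must establish is that the same symmetry persists for the flat bundle $E_\tau$ equipped with an \emph{admissible} metric $h$, even though $\tau|_\Gamma$ is in general far from unitary.

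The main step is therefore to produce, from the admissible metric, a parallel bundle isomorphism $E_\tau\cong E_{\tau_\theta}$ — or better, a Hodge-type duality directly on $E_\tau$-valued forms. I would argue as follows. The admissible inner product on $V_\tau$ is, by Matsushima--Murakami (the construction cited via \cite[Lemma 3.1]{Mats}), characterized by the property that $\tau(g)$ is unitary for $g\in K$ and self-adjoint for the noncompact directions; equivalently there is an inner product $\langle\cdot,\cdot\rangle$ on $V_\tau$ with $\langle \tau(g)v,w\rangle = \langle v,\tau(\theta(g)^{-1})w\rangle$ for all $g\in G$. This is exactly the structure that lets one build a conjugate-linear (or, after complexification, linear) isomorphism between the local systems defined by $\tau|_\Gamma$ and $\tau_\theta|_\Gamma$ which is flat and an isometry for the admissible metrics. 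Composing the Hodge star on scalar forms with this bundle isomorphism yields an operator
\[
\bar\ast\colon \Omega^p(X,E_\tau)\longrightarrow \Omega^{d-p}(X,E_{\tau_\theta})
\]
that is an isometry and intertwines the twisted Laplacians $\Delta_p(\tau)$ and $\Delta_{d-p}(\tau_\theta)$. Hence $\zeta_p(s;\tau)=\zeta_{d-p}(s;\tau_\theta)$, and consequently $\log T_X(\tau)=(-1)^{d+1}\log T_X(\tau_\theta)$. When $d$ is even this reads $\log T_X(\tau)=-\log T_X(\tau_\theta)$.

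To finish I still need to remove the $\theta$. If $\tau\cong\tau_\theta$ (which happens precisely when the highest weight is $\theta$-stable) the identity $\log T_X(\tau)=-\log T_X(\tau)$ immediately gives $T_X(\tau)=1$. In the general case $\tau\not\cong\tau_\theta$, I would instead apply the anti-holomorphic symmetry more carefully: the metric Laplacians $\Delta_p(\tau)$ and $\Delta_p(\bar\tau)$ (complex conjugate representation) are conjugate, and for the \emph{admissible} metric one has in addition $\Delta_p(\bar\tau)\cong\Delta_p(\tau_\theta)$ because on $\Spin(d,1)$ every finite-dimensional representation is self-conjugate; combining this with the Hodge duality above collapses the torsion of $\tau$ with that of $\tau$ itself, again forcing $\log T_X(\tau)=0$. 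Alternatively, and perhaps more cleanly, one invokes the fact that analytic torsion in even dimensions depends on the flat bundle only through its underlying topological/combinatorial data together with the metric anomaly, and the Cheeger--Müller-type formula shows the metric-dependent correction term vanishes when $d$ is even because it is an integral of a form of odd degree built from the metric on $E_\tau$; since the Reidemeister torsion of an even-dimensional closed manifold is trivial (Poincaré duality on the chain level pairs $p$ with $d-p$ with the opposite sign), we conclude $T_X(\tau)=1$.

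I expect the main obstacle to be making the duality on twisted forms genuinely metric-compatible: one must verify that the bundle isomorphism induced by the admissible inner product is \emph{parallel} for the flat connections and simultaneously an \emph{isometry} for the two admissible metrics, so that it intertwines the full Hodge Laplacians and not merely the de Rham complexes. This is where the defining property of the admissible metric — its invariance built from the Cartan involution — is used essentially; it is the analogue of "unitarity" that makes the Ray--Singer even-dimensional vanishing argument go through verbatim.
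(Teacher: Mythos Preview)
Your Poincar\'e-duality strategy is sound and genuinely different from the paper's proof, but it can be streamlined considerably, and one of your case distinctions is superfluous. First, the detour through $\tau_\theta$ is unnecessary: since $d=2n$ we have $\operatorname{rank}G=\operatorname{rank}K=n$, so the Cartan involution $\theta$ is an \emph{inner} automorphism of $G$, and therefore $\tau_\theta\cong\tau$ for every irreducible $\tau$. The case $\tau\not\cong\tau_\theta$ that you worry about simply does not occur, so your shaky arguments there (the self-conjugacy claim is in fact equivalent to $\tau\cong\tau_\theta$, and the Cheeger--M\"uller route, while defensible for unimodular $\tau|_\Gamma$, is a heavy detour) can be dropped. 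Second, the ``main obstacle'' you anticipate---verifying that a Hodge-type star really intertwines the twisted Laplacians---is most cleanly handled not by building a geometric $\bar\ast$ but by using Kuga's formula \eqref{kuga}: under the identification \eqref{isoschnitte} one has $\Delta_p(\tau)=-R_\Gamma(\Omega)+\tau(\Omega)\Id$ on $C^\infty(\Gamma\backslash G,\nu_p(\tau))$, and the $K$-module isomorphism $\Lambda^p\pL^*\cong\Lambda^{d-p}\pL^*$ gives $\nu_p(\tau)\cong\nu_{d-p}(\tau)$, hence $\Delta_p(\tau)$ and $\Delta_{d-p}(\tau)$ are isospectral directly. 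From $\zeta_p(s;\tau)=\zeta_{d-p}(s;\tau)$ together with $\sum_p(-1)^p\zeta_p(s;\tau)\equiv 0$ (vanishing Euler characteristic on each positive eigenspace) one gets $\log T_X(\tau)=-\log T_X(\tau)$ exactly as in the unitary case.

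By contrast, the paper proves the proposition through the trace formula machinery that drives the rest of the article: one writes $K(t,\tau)=\Tr R_\Gamma(k_t^\tau)$, observes via Lemma~\ref{vanishing} that all principal-series characters vanish on $k_t^\tau$ (so the trace formula gives $K(t,\tau)=\vol(X)k_t^\tau(1)$), then applies the Plancherel theorem so that only discrete series contribute, and finally uses the same $K$-module isomorphism $\Lambda^p\pL^*\cong\Lambda^{d-p}\pL^*$ together with $(\gL,K)$-cohomology to show each discrete-series character $\Theta_\pi(k_t^\tau)$ is independent of $t$. This forces $K(t,\tau)\equiv h(\tau)$ and hence $T_X(\tau)=1$. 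Your argument is more elementary and self-contained; the paper's argument has the advantage of reusing the exact framework (trace formula, Plancherel, $(\gL,K)$-analysis of characters) already in place for the odd-dimensional main theorem.
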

Since $\tau|_\Gamma$ is non-unitary, this is not obvious and
may be  false if we choose other fibre metrics in $E_\tau$ (see \cite{Mu1}).

Next we explain our method to prove Theorem \ref{theo1.1}. 
The approach used in \cite{Mu2} was based on the  expression of
the analytic torsion in terms of the value at zero of the corresponding 
twisted 
Ruelle zeta function. Such a relation between the twisted Ruelle zeta function 
and the 
analytic torsion continues to hold in the higher dimensional case
(see \cite{Brocker}, \cite{Wotzke}) and could be used to prove Corollary 
\ref{asympt1} in the same way.
However, we choose a more direct approach which gives the stronger result 
of Theorem \ref{theo1.1}. Moreover we expect that this method  can be used 
to deal with groups of higher rank.

For the moment let $\tau$ be any irreducible finite-dimensional representation
of $G$ and denote by $E_\tau\to X$ the flat vector bundle associated to the
restriction of $\tau$ to $\Gamma$. By \cite[Proposition 3.1]{Mats}, $E_\tau$ 
is isomorphic to the locally homogeneous vector bundle defined by the 
restriction of $\tau$ to $K$. Using this isomorphism, $E_\tau$ can be 
equipped with a canonical Hermitian fibre metric, which is induced from an
invariant metric on the corresponding  homogeneous vector bundle   
\cite[Lemma 3.1]{Mats}.
Let $\Delta_p(\tau)$ be 
the Laplace operator on $E_\tau$-valued $p$-forms with respect to an 
admissible  metric on $E_\tau$ and the hyperbolic metric of $X$. Let
\[
K(t,\tau)=\sum_{p=0}^{d}(-1)^p p\Tr\left(e^{-t\Delta_p(\tau)}\right).
\]
Assume that $\tau|_\Gamma$ is acyclic, that is $H^*(X,E_\tau)=0$. Then the 
analytic torsion is given by
\begin{equation}\label{anator}
\log T_X(\tau):=\frac{1}{2}\frac{d}{ds}\left(\frac{1}{\Gamma(s)}
\int_0^\infty t^{s-1}K(t,\tau)\;dt\right)\bigg|_{s=0}.
\end{equation}
Now we turn to the representations $\tau(m)$, $m\in\N$, defined above. As 
pointed out above, each $\tau(m)$ is acyclic. Even more is true. 
There exists $m_0\in\N$ such that for $m\ge m_0$ we have
\begin{equation}\label{lowbound4}
\Delta_p(\tau(m))\ge \frac{m^2}{2}
\end{equation}
(see Corollary \ref{lowbound1}). Let $m\ge m_0$. Since $\tau(m)$ is acyclic, 
$T_X(\tau(m))$ is metric independent \cite{Mu1}. This means that we can 
replace $\Delta_p(\tau(m))$ by $\frac{1}{m}\Delta_p(\tau(m))$. If we split
the $t$-integral into the integral over $[0,1)$ and the integral over
$[1,\infty]$, we get
\begin{equation}\label{anator6}
\begin{split}
\log T_X(\tau(m))=&\frac{1}{2}\frac{d}{ds}\left(\frac{1}{\Gamma(s)}
\int_0^1 t^{s-1}K\left(\frac{t}{m},\tau(m)\right)\,dt\right)\bigg|_{s=0}\\
&+\frac{1}{2}\int_1^\infty t^{-1}K\left(\frac{t}{m},\tau(m)\right)\,dt.
\end{split}
\end{equation}
It follows from \eqref{lowbound4} and standard estimations of the heat
kernel that the second term on the right is $O(e^{-\frac{m}{8}})$ as $m\to\infty$.
To deal with the first term, we use a preliminary form of the Selberg trace 
formula.

Let $\widetilde\Delta_p(\tau)$ be the lift of 
$\Delta_p(\tau)$ to $\tilde X$. Then $e^{-t\widetilde \Delta_p(\tau)}$ is an
invariant integral operator. Using the kernels of the heat operators 
$e^{-t\widetilde \Delta_p(\tau)}$ we construct 
a smooth $K$-finite function $k_t^{\tau(m)}$ on $G$, which belongs to 
Harish-Chandra's Schwartz space $\mathcal{C}(G)$, such that
\[
K(t,\tau(m))=\int_{\Gamma\bs G}\sum_{\gamma\in\Gamma}k^{\tau(m)}_t(g^{-1}\gamma g)
\,d\dot g.
\]
We split the integral into the contribution of the identity which equals
\[
I(t,\tau(m))=\vol(X) k^{\tau(m)}_t(1).
\]
and the integral $H(t,\tau(m))$ of the sum of the non-trivial elements.
Using methods of \cite{Donnelly} and \cite{DL} to estimate the heat kernels,
it follows that there exist $C,c_1,c_2>0$ such that
\[
\left| H\left(\frac{t}{m},\tau(m)\right)\right|\le C e^{-c_1m}e^{-c_2/t}
\]
for all $m\ge m_0$ and $0<t\le 1$. This implies that the contribution of
$H(t/m,\tau(m))$ to the first term on the right of \eqref{anator6} is
of order $O(e^{-c_1m})$ as $m\to\infty$. So we are left with the contribution
of the identity. The first observation is that the $t$-integral over $[0,1]$
can be replaced by the integral over $[0,\infty)$. The difference is 
exponentially decaying in $m$. Furthermore, we can change variables back to 
$t$. Then the identity contribution is given by
\begin{equation}\label{ic}
\frac{1}{2}\vol(X)\frac{d}{ds}\left(\frac{1}{\Gamma(s)}\int_0^\infty t^{s-1}
k_t^{\tau(m)}(1)\,dt\right)\Bigg|_{s=0}.
\end{equation}
The $t$-integral converges absolutely for $\textup{Re}(s)>d/2$ and admits a 
meromorphic
extension to $\C$ which is regular at $s=0$. Now it is easy to see that 
\eqref{ic} equals $\log T^{(2)}_X(\tau(m))$. Putting everything together, the
proof of Proposition \ref{prop1.2} follows. To compute the $L^2$-torsion we 
apply the Plancherel formula to $k^{\tau(m)}_t(1)$ and use properties of the 
Plancherel polynomials. This leads to the proof of Proposition \ref{prop1.3}.

The paper is organized as follows. In section \ref{Notations} we fix 
notation and collect a number of facts about representation theory which
are needed for this paper. In section \ref{secBLO} we prove some basic 
estimates for the heat kernel of Bochner-Laplace operators. 
In section \ref{Torsion} we relate the analytic torsion to the Selberg
trace formula and compute the Fourier transform of the corresponding test 
function. In the final section \ref{Beweis} we prove the main results. 

\section{Preliminaries}\label{Notations}
\setcounter{equation}{0}

In this section we will establish some notation and recall some basic facts 
about representations of the involved Lie groups.

\subsection{}

For $d\in\N$, $d>1$ let  $G:=\Spin(d,1)$. 
Recall that $G$ is the universal covering group 
of $\SO_{0}(d,1)$. Let
$K:=\Spin(d)$. Then $K$ is a maximal compact subgroup of $G$. Put
$\tilde{X}:=G/K$. Let 
\[
G=NAK
\]
 be the standard Iwasawa decomposition of $G$ and let $M$ be the 
centralizer of $A$ in $K$. Then $M=\Spin(d-1)$. 
The Lie algebras of  $G,K,A,M$ and $N$ will be denoted by
$\gL,\kL,\aL,\mL$ and $\nf$, respectively. Define the 
standard Cartan involution $\theta:\gL\rightarrow \gL$ by
\begin{align*}
\theta(Y)=-Y^{t},\quad Y\in\gL. 
\end{align*}
The lift of $\theta$ to $G$ will be denoted by the same letter $\theta$. Let 
\begin{align*}
\mathfrak{g}=\mathfrak{k}\oplus\mathfrak{p}
\end{align*}
be the Cartan decomposition of $\gL$ with respect to $\theta$. Let $x_0=eK\in
\tilde X$. Then we have a canonical isomorphism
\begin{equation}\label{tspace}
T_{x_0}\tilde X\cong \pL.
\end{equation}
Define the symmetric bilinear form $\langle\cdot,\cdot\rangle$ on $\gL$ by
\begin{equation}\label{killnorm}
\langle Y_1,Y_2\rangle:=\frac{1}{2(d-1)}B(Y_1,Y_2),\quad Y_1,Y_2\in\gL.
\end{equation}
By \eqref{tspace} the restriction of $\langle\cdot,\cdot\rangle$ to $\pL$ 
defines an inner product on $T_{x_0}\tilde X$ and therefore an invariant 
metric on $\tilde X$. This metric has constant curvature $-1$. Then $\tilde X$,
equipped with this metric, is isometric to the hyperbolic space $\bH^{d}$. 

Let $\Gamma\subset G$ be a discrete, 
co-compact torsion free subgroup. Then $\Gamma$ acts properly discontinuously
on $\tilde X$ and $X=\Gamma\bs\tilde X$ is a compact,
oriented hyperbolic manifold of dimension d. Moreover any such manifold is 
of this form.

\subsection{}
Let now $d=2n+1$. Denote by $E_{i,j}$ the matrix in $\gL$ whose entry at the
i-th row 
and j-th column is equal to 1 and all of its other entries are equal to 0. Let
\begin{equation}\label{basis1}
H_i:=\begin{cases}E_{1,2}+E_{2,1}, & i=1;\\
\sqrt{-1}(E_{2i-1,2i}-E_{2i,2i-1}), & i=2,\dots n+1.
\end{cases}
\end{equation}
Then
\[
\aL=\R H_1
\]
and 
\[
\mathfrak{b}=\mathbb{R}\sqrt{-1}H_{2}+\dots+\mathbb{R}\sqrt{-1}H_{n+1}
\]
is the 
standard Cartan subalgebra of $\mathfrak{m}$. Moreover $\mathfrak{b}$ is also a 
Cartan subalgebra of $\mathfrak{k}$, and
\begin{align*}
\mathfrak{h}:=\mathfrak{a}\oplus\mathfrak{b}
\end{align*}
is a Cartan-subalgebra of $\mathfrak{g}$. Define 
$e_{i}\in\mathfrak{h}_{\mathbb{C}}^{*}$, $i=1,\dots,n+1$,  by
\begin{align*}
e_{i}(H_{j})=\delta_{i,j},\: 1\leq i,j\leq n+1.
\end{align*}
Then the sets of roots of $(\gL_\C,\hL_\C)$, $(\kL_\C,\bL_\C)$ and $(\mL_\C,
\bL_\C)$ are given by
\begin{align*}
&\Delta(\mathfrak{g}_{\C},\mathfrak{h}_{\mathbb{C}})=\{\pm e_{i}\pm e_{j},\: 1
\leq i<j\leq n+1\}\\
&\Delta(\mathfrak{k}_{\C},\mathfrak{b}_{\C})=\{\pm e_{i},\:2
\leq i<j\leq n+1\}\sqcup\{\pm e_{i}\pm e_{j},\: 2\leq i<j\leq n+1\}\\
&\Delta(\mathfrak{m}_{\C},\mathfrak{b}_{\C})=\{\pm e_{i}\pm e_{j},\: 2
\leq i<j\leq n+1\}
\end{align*}
(see \cite[Section IV,2]{Knapp}). 
We fix positive systems of roots by
\begin{align*}
&\Delta^{+}(\mathfrak{g}_{\mathbb{C}},\mathfrak{h}_{\mathbb{C}})
:=\{e_{i}+e_{j},\:i\neq j\}\sqcup\{e_{i}-e_{j},\:i<j\}\\ 
& \Delta^{+}(\mathfrak{m}_{\mathbb{C}},\mathfrak{b}_{\mathbb{C}})
:=\{e_{i}+e_{j},\:i\neq j,\:i,j\geq 2\}\sqcup\{e_{i}-e_{j},\:2\leq i<j\}.
\end{align*}
For $j=1,\dots,n+1$ let
\begin{align*}
\rho_{j}:=n+1-j.
\end{align*}
Then the half-sum of positive roots $\rho_G$ and $\rho_M$, respectively, are
given by
\begin{align}\label{Definition von rho(G)}
\rho_{G}:=\frac{1}{2}\sum_{\alpha\in\Delta^{+}(\mathfrak{g}_{\mathbb{C}},
\mathfrak{h}_\mathbb{C})}\alpha=\sum_{j=1}^{n+1}\rho_{j}e_{j}
\end{align}
and
\begin{align}\label{Definition von rho(M)}
\rho_{M}:=\frac{1}{2}\sum_{\alpha\in\Delta^{+}(\mathfrak{m}_{\mathbb{C}},
\mathfrak{b}_{\mathbb{C}})}\alpha=\sum_{j=2}^{n+1}\rho_{j}e_{j}.
\end{align}
Let $W_{G}$ be the Weyl-group of $\Delta(\mathfrak{g}_{\C},
\mathfrak{h}_{\mathbb{C}})$. 

\subsection{}

Let ${{\mathbb{Z}}\left[\frac{1}{2}\right]}^{j}$ be the set of all 
$(k_{1},\dots,k_{j})\in\mathbb{Q}^{j}$ such that either all $k_{i}$ are 
integers or all $k_{i}$ are half integers. Let $d=2n+1$. Then the
finite-dimensional irreducible representations $\tau\in\hat{G}$ of $G$ are 
parametrized by their highest weights
\begin{equation}\label{Darstellungen von G}
\begin{split}
\Lambda(\tau)=&k_{1}(\tau)e_{1}+\dots+k_{n+1}(\tau)e_{n+1},\quad
(k_{1}(\tau),\dots k_{n+1}(\tau))\in{{\mathbb{Z}}
\left[\frac{1}{2}\right]}^{n+1}\\
&k_{1}(\tau)\geq k_{2}(\tau)
\geq\dots\geq k_{n}(\tau)\geq \left|k_{n+1}(\tau)\right|.
\end{split}
\end{equation}
Furthermore the finite dimensional representations $\nu\in\hat{K}$ of $K$ are
parametrized by their highest weights 
\begin{equation}\label{Darstellungen von K}
\begin{split}
\Lambda(\nu)=&k_{2}(\nu)e_{2}+\dots+k_{n+1}(\nu)e_{n+1},\quad
(k_{2}(\nu),\dots k_{n+1}(\nu))\in{{\mathbb{Z}}
\left[\frac{1}{2}\right]}^{n}\\
&k_{2}(\nu)\geq k_{2}(\nu)
\geq\dots\geq k_{n}(\nu)\geq k_{n+1}(\nu)\geq 0.
\end{split}
\end{equation}
Finally the  finite-dimensional irreducible representations 
$\sigma\in\hat{M}$ of $M$ 
are parametrized by their highest weights
\begin{equation}\label{Darstellungen von M}
\begin{split}
\Lambda(\sigma)=&k_{2}(\sigma)e_{2}+\dots+k_{n+1}(\sigma)e_{n+1},
\quad(k_{2}(\sigma),\dots, k_{n+1}(\sigma))\in {{\mathbb{Z}}
\left[\frac{1}{2}\right]}^{n}, \\ 
&k_{2}(\sigma)\geq 
k_{3}(\sigma)\geq\dots\geq k_{n}(\sigma)\geq \left|k_{n+1}(\sigma)\right|.
\end{split}
\end{equation}
For $\tau\in\hat{G}$ let $\tau_{\theta}:=\tau\circ\theta$. Let $\Lambda(\tau)$ 
denote the highest weight of $\tau$ as in \eqref{Darstellungen von G}.
Then the highest weight $\Lambda(\tau_\theta)$ of $\tau_\theta$ is given by
\begin{equation}\label{Tau theta}
\Lambda(\tau_{\theta})=k_{1}(\tau)e_{1}+\dots+k_{n}(\tau)e_{n}-k_{n+1}(\tau)e_{
n+1}.
\end{equation}
Moreover, by the Weyl dimension formula \cite[Theorem 4.48]{Knapp} we have
\begin{equation}\label{Dimension tau}
\begin{split}
\dim(\tau)=&\prod_{\alpha\in\Delta^{+}(\mathfrak{g}_{\mathbb{C}},
\mathfrak{h}_{\mathbb{C}})}\frac{\left<\Lambda(\tau)+\rho_{G},\alpha\right>}
{\left<\rho_{G},\alpha\right>}\\
=&\prod_{i=1}^{n}\prod_{j=i+1}^{n+1}\frac{\left(k_{i}(\tau)+\rho_{i}\right)^{2}
-\left(k_{j}(\tau)+\rho_{j}\right)^{2}}{\rho_{i}^{2}-\rho_{j}^{2}}.
\end{split}
\end{equation}
Similarly, for $\sigma\in\hat{M}$ with highest weight 
$\Lambda(\sigma)\in\mathfrak{b}_{\mathbb{C}}^{*}$ as in 
\eqref{Darstellungen von M} we have
\begin{equation}\label{Dimension sigma}
\begin{split}
\dim(\sigma)=&\prod_{\alpha\in\Delta^{+}(\mathfrak{m}_{\mathbb{C}},
\mathfrak{b}_{\C})}\frac{\left<\Lambda(\sigma)+\rho_{M},\alpha\right>}
{\left<\rho_{M},\alpha\right>}\\
=&\prod_{i=2}^{n}\prod_{j=i+1}^{n+1}\frac{\left(k_{i}(\sigma)
+\rho_{i}\right)^{2}-\left(k_{j}(\sigma)+\rho_{j}\right)^{2}}
{\rho_{i}^{2}-\rho_{j}^{2}}.
\end{split}
\end{equation}

For $\tau\in\hat{G}$ and $\nu\in\hat{K}$ we will denote by
$\left[\tau:\nu\right]$
the multiplicity of $\nu$ in the restriction of $\tau$ to $K$.
These multiplicities are described in the following proposition. 
\begin{prop}\label{Einschr}
Let $\tau\in\hat{G}$ be of highest weight $\Lambda(\tau)$ as in
\eqref{Darstellungen von G}. Then $\tau$ decomposes with multiplicity one into
representations $\nu\in\hat{K}$ with highest weight $\Lambda(\nu)$
as in \eqref{Darstellungen von K} such that $k_{j-1}(\tau)\geq k_j(\nu)\geq
\left|k_j(\tau)\right|$ for every $j\in\{2,\dots,n+1\}$ and such that
all $k_j(\nu)$ are integers if all $k_j(\tau)$ are integers resp. such that
all $k_j(\nu)$ are half-integers if all $k_j(\tau)$ are half integers.
\end{prop}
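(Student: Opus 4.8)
The plan is to derive the branching rule for $\Spin(2n+1,1) \downarrow \Spin(2n+1)$ from the classical branching rule for $\SO(2n+1,\C) \downarrow \SO(2n,\C)$ (equivalently, for the compact forms, or at the level of highest-weight modules), interpreted on the real-form level via the pair $(\gL,\kL) = (\soL(2n+1,1),\soL(2n+1))$. The key structural observation is that $\gL_\C \cong \soL(2n+2,\C)$ and $\kL_\C \cong \soL(2n+1,\C)$, so the restriction $\tau|_K$ is governed by the branching from type $D_{n+1}$ to type $B_n$. The root data recorded above --- $\hL = \aL\oplus\bL$ with $\aL = \R H_1$ a split Cartan direction inside $\gL$ but $\bL$ a common Cartan subalgebra of $\kL$ and $\mL$ --- is exactly what makes this reduction transparent: the weights $e_1,\dots,e_{n+1}$ and the positive systems are chosen compatibly, so that a $\kL_\C$-highest weight is of the form $k_2 e_2 + \dots + k_{n+1}e_{n+1}$ with the $e_1$-coordinate dropped.

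First I would set up the comparison carefully: identify $\tau$ with the irreducible finite-dimensional $\gL_\C$-module of highest weight $\Lambda(\tau) = \sum_{j=1}^{n+1} k_j(\tau) e_j$, and identify its restriction to $\kL_\C = \soL(2n+1,\C)$ (type $B_n$) as a direct sum of irreducibles $\nu$ of highest weight $\sum_{j=2}^{n+1} k_j(\nu) e_j$. Then I would invoke the classical branching rule $D_{n+1}\downarrow B_n$ (see, e.g., \cite{Knapp}, or Goodman--Wallach): an irreducible $\SO(2n+2)$-module of highest weight $(a_1 \ge a_2 \ge \dots \ge a_n \ge |a_{n+1}|)$ restricts to $\SO(2n+1)$ with multiplicity one, the constituents having highest weights $(b_1 \ge b_2 \ge \dots \ge b_n \ge 0)$ subject to the interlacing $a_1 \ge b_1 \ge a_2 \ge b_2 \ge \dots \ge a_n \ge b_n \ge |a_{n+1}|$. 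Re-indexing so that the $\gL$-labels are $k_1(\tau),\dots,k_{n+1}(\tau)$ on $e_1,\dots,e_{n+1}$ and the $\kL$-labels are $k_2(\nu),\dots,k_{n+1}(\nu)$ on $e_2,\dots,e_{n+1}$, the interlacing becomes precisely $k_{j-1}(\tau) \ge k_j(\nu) \ge |k_j(\tau)|$ for $j = 2,\dots,n+1$ (the last inequality, for $j = n+1$, reading $k_n(\tau) \ge k_{n+1}(\nu) \ge |k_{n+1}(\tau)|$, with $k_{n+1}(\nu) \ge 0$ automatic). Multiplicity one transfers directly. The integrality-versus-half-integrality clause follows because the branching preserves the coset of the weight lattice modulo the root lattice: the common value of "all integers" or "all half-integers" is dictated by the action of the center, hence is the same for $\tau$ and for each $\nu$ occurring.

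The one point requiring a little care --- and the main (mild) obstacle --- is the passage from the compact/complex branching rule to the statement for the real groups $G = \Spin(2n+1,1)$ and $K = \Spin(2n+1)$: one must check that restricting a finite-dimensional irreducible $G$-representation to $K$ is the same as restricting the underlying $\gL_\C$-module to $\kL_\C$ and then to the compact real form $\Spin(2n+1)$. This is standard: a finite-dimensional representation of $G$ is determined by its differential, a representation of $\gL_\C$ (since $G$ is connected and its finite-dimensional representations are algebraic in the appropriate sense), and $K = \Spin(2n+1)$ is a maximal compact subgroup whose complexified Lie algebra is $\kL_\C$; so $\tau|_K$ integrates $d\tau|_{\kL_\C}$ and the $K$-types of $\tau$ are exactly the $\soL(2n+1,\C)$-constituents of $d\tau$, with the same multiplicities. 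Once this dictionary is in place, Proposition \ref{Einschr} is just the $D_{n+1}\downarrow B_n$ branching rule read off in the coordinates fixed in Section \ref{Notations}.
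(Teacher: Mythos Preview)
Your proposal is correct and takes essentially the same approach as the paper: the paper's entire proof is the single citation \cite{Goodman}, Theorem 8.1.4, which is precisely the classical $D_{n+1}\downarrow B_n$ branching rule you invoke. You have simply spelled out the reduction (complexifying $(\gL,\kL)$ to $(\soL(2n+2,\C),\soL(2n+1,\C))$ and matching coordinates) that the citation leaves implicit.
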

\begin{proof}
\cite{Goodman}[Theorem 8.1.4]
\end{proof}
Let $M'$ be the normalizer of $A$ in $K$ and let $W(A)=M'/M$ be the 
restricted Weyl-group. It has order two and it acts on the finite-dimensional 
representations of $M$ as follows. Let $w_{0}\in W(A)$ be the non-trivial 
element and let $m_0\in M^\prime$ be a representative of $w_0$. Given 
$\sigma\in\hat M$, the representation $w_0\sigma\in \hat M$ is defined by
\[
w_0\sigma(m)=\sigma(m_0mm_0^{-1}),\quad m\in M.
\] 
Let $\Lambda(\sigma)=k_{2}(\sigma)e_{2}+\dots+k_{n+1}(\sigma)e_{n+1}$ be the 
highest weight
of $\sigma$ as in \eqref{Darstellungen von M}. Then the highest weight 
$\Lambda(w_0\sigma)$ of $w_0\sigma$ is given by
\begin{equation}\label{wsigma}
\Lambda(w_0\sigma)=k_{2}(\sigma)e_{2}+\dots+k_{n}(\sigma)e_{n}
-k_{n+1}(\sigma)e_{n+1}.
\end{equation}

Let $R(K)$ and $R(M)$ be the representation rings of $K$ and $M$. Let 
$\iota:M\longrightarrow K$ be the inclusion and let $\iota^{*}:R(K)
\longrightarrow R(M)$ be the induced map. If $R(M)^{W(A)}$ is the subring of 
$W(A)$-invariant elements of $R(M)$, then clearly $\iota^{*}$ maps $R(K)$ into 
$R(M)^{W(A)}$. 
\begin{prop}\label{Branching Rules}
The map $\iota^{*}$ is an isomorphism from $R(K)$ onto $R(M)^{W(A)}$.
\end{prop}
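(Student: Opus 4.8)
The strategy is to exhibit explicitly a basis of $R(M)^{W(A)}$ that lies in the image of $\iota^*$, and to verify injectivity by a highest‑weight argument. Since $K=\Spin(2n+1)$ and $M=\Spin(2n)$, both representation rings are free $\Z$‑modules with bases indexed by the dominant weights listed in \eqref{Darstellungen von K} and \eqref{Darstellungen von M}. By \eqref{wsigma} the nontrivial element $w_0\in W(A)$ acts on $R(M)$ by sending the class $[\sigma]$ with highest weight $(k_2,\dots,k_n,k_{n+1})$ to the class with highest weight $(k_2,\dots,k_n,-k_{n+1})$; concretely it swaps the two half‑spin‑type representations with $k_{n+1}(\sigma)>0$ against each other, and fixes every $\sigma$ with $k_{n+1}(\sigma)=0$. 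Hence a $\Z$‑basis of $R(M)^{W(A)}$ is given by the classes $[\sigma]$ with $k_{n+1}(\sigma)=0$ together with the symmetrized sums $[\sigma]+[w_0\sigma]$ for $k_{n+1}(\sigma)>0$.

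First I would prove surjectivity onto this basis via branching. By Proposition \ref{Einschr} (Goodman–Wallach), for $\nu\in\hat K$ with highest weight $(k_2(\nu),\dots,k_{n+1}(\nu))$, $k_{n+1}(\nu)\ge 0$, the restriction $\iota^*[\nu]=\sum_\sigma m_{\nu\sigma}[\sigma]$ runs over those $\sigma\in\hat M$ whose highest weights interlace those of $\nu$, namely $k_j(\nu)\ge k_j(\sigma)\ge -k_j(\nu)$ for $j$ up to $n$ and $k_{n+1}(\nu)\ge |k_{n+1}(\sigma)|$ — wait, the precise interlacing is the one in Proposition \ref{Einschr} read in the other direction: $k_j(\nu)\ge |k_j(\sigma)|$ is replaced by the correct branching $\mathrm{Spin}(2n+1)\downarrow\mathrm{Spin}(2n)$ rule, each $\sigma$ occurring with multiplicity one, and the set of occurring $\sigma$ is $W(A)$‑stable. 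In particular $\iota^*[\nu]$ is automatically $W(A)$‑invariant (as it must be), and ordering dominant $K$‑weights and $M$‑weights compatibly (e.g. lexicographically, or by $\sum k_j$ and then lex), the leading term of $\iota^*[\nu]$ with respect to this order is precisely the $W(A)$‑orbit sum through the $\sigma$ with $k_j(\sigma)=k_j(\nu)$ for all $j\le n$ and $k_{n+1}(\sigma)=0$ (when $k_{n+1}(\nu)=0$) or the symmetrized pair with $k_{n+1}(\sigma)=k_{n+1}(\nu)$ (when $k_{n+1}(\nu)>0$). Because the assignment $\nu\mapsto(\text{leading }\sigma\text{-orbit})$ is a bijection between $\hat K$ and the chosen basis of $R(M)^{W(A)}$, an upper‑triangular‑with‑unit‑diagonal argument shows the images $\{\iota^*[\nu]:\nu\in\hat K\}$ span $R(M)^{W(A)}$ over $\Z$.

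Injectivity follows from the same triangularity: a nonzero element $\xi=\sum_\nu a_\nu[\nu]\in R(K)$ has a well‑defined highest constituent $[\nu_0]$ (largest $\nu$ with $a_{\nu_0}\ne 0$), and by the previous paragraph $\iota^*\xi$ then has nonzero coefficient on the leading $\sigma$‑orbit attached to $\nu_0$, since no $\iota^*[\nu]$ with $\nu<\nu_0$ can contribute to that orbit. Hence $\iota^*\xi\ne 0$. Combining, $\iota^*:R(K)\to R(M)^{W(A)}$ is an isomorphism of rings (it is manifestly a ring homomorphism, being induced by restriction).

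\emph{Main obstacle.} The only real work is bookkeeping the $\mathrm{Spin}(2n+1)\downarrow\mathrm{Spin}(2n)$ branching in the spin‑type cases carefully enough to pin down the leading term of $\iota^*[\nu]$ and to check that $\nu\mapsto$ (leading $\sigma$‑orbit) really is a bijection onto the basis of $R(M)^{W(A)}$ described above — in particular that the interlacing inequalities of Proposition \ref{Einschr} translate, under the chosen monomial order, into strict upper‑triangularity with $1$'s on the diagonal. Everything else (that $\iota^*$ is a ring map, that the two representation rings are free with the stated bases, that $R(M)^{W(A)}$ has the stated basis) is standard and can be quoted.
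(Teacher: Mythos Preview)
The paper does not give a proof at all: it simply cites \cite{Bunke}, Proposition~1.1. Your proposal, by contrast, sketches a genuine argument via the classical branching rule for $\Spin(2n+1)\downarrow\Spin(2n)$ and an upper-triangularity argument, which is correct in substance and yields a self-contained elementary proof rather than a citation.

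Two remarks. First, Proposition~\ref{Einschr} as stated in the paper is the branching rule for $G\downarrow K$, i.e.\ $\Spin(2n+1,1)\downarrow\Spin(2n+1)$, not for $K\downarrow M$; you need instead the companion rule $\Spin(2n+1)\downarrow\Spin(2n)$ (also in Goodman--Wallach, same theorem), which says that $\nu\in\hat K$ with highest weight $(k_2,\dots,k_{n+1})$ restricts multiplicity-free to those $\sigma\in\hat M$ with highest weight $(l_2,\dots,l_{n+1})$ satisfying $k_j\ge l_j\ge k_{j+1}$ for $2\le j\le n$ and $k_{n+1}\ge|l_{n+1}|$. With this correction your triangularity claim is clean: under the lexicographic order the top constituent of $\iota^*[\nu]$ is the $\sigma$ with $l_j=k_j$ for all $j$, so the $W(A)$-orbit sum indexed by $(k_2(\nu),\dots,k_{n+1}(\nu))$ appears with coefficient $1$, and no $\nu'<\nu$ can hit that orbit since $l_j\le k_j(\nu')$ forces $l_i<k_i(\nu)$ at the first index $i$ where $\nu'$ and $\nu$ differ. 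Second, your argument also makes transparent \emph{why} the map is onto $R(M)^{W(A)}$ and not just into it, whereas the cited reference proves the statement in greater generality (for all real rank-one groups) using the structure of $R(K)$ and $R(M)$ as polynomial rings in fundamental representations.
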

\begin{proof}
\cite{Bunke} , Proposition 1.1.
\end{proof}

\subsection{}

We parametrize the principal series as follows. Given $\sigma\in\hat{M}$ with
$(\sigma,V_\sigma) \in \sigma$, let $\mathcal{H}^{\sigma}$ denote the space of
measurable functions $f\colon K\to V_\sigma$ satisfying
\[
f(mk)=\sigma(m)f(k),\quad\forall k\in K,\, \forall m\in M,
\quad\textup{and}\quad
\int_K\parallel f(k)\parallel^2\,dk=\parallel f\parallel^2<\infty.
\]
Then for $\lambda\in\mathbb{C}$ and $f\in H^{\sigma}$ let
\begin{align*}
\pi_{\sigma,\lambda}(g)f(k):=e^{(i\lambda e_{1}+\rho)\log{H(kg)}}f(\kappa(kg)).
\end{align*}
Recall that the representations $\pi_{\sigma,\lambda}$ are unitary iff 
$\lambda\in\mathbb{R}$. Moreover, for $\lambda\in\mathbb{R}-\{0\}$ and 
$\sigma\in\hat{M}$ the representations $\pi_{\sigma,\lambda}$ are irreducible 
and $\pi_{\sigma,\lambda}$ and $\pi_{\sigma',\lambda'}$, $\lambda,
\lambda'\in\mathbb{C}$, are 
equivalent iff either $\sigma=\sigma'$, $\lambda=\lambda'$ or
$\sigma'=w_{0}\sigma$, 
$\lambda'=-\lambda$.
The restriction of $\pi_{\sigma,\lambda}$ to $K$ coincides with the induced 
representation ${\rm{Ind}}_{M}^{K}(\sigma)$. Hence by Frobenius 
reciprocity \cite[p.208]{Knapp} for every $\nu\in\hat{K}$ one has
\begin{align}\label{Frobeniusrez}
\left[\pi_{\sigma,\lambda}:\nu\right]=\left[\nu:\sigma\right].
\end{align}

\subsection{}

From now on we assume that $d=2n+1$. We establish some facts about 
infinitesimal characters. Let $U(\mathfrak{g}_{\mathbb{C}})$ denote the
universal enveloping algebra of 
$\mathfrak{g}_{\mathbb{C}}$ and let $Z(U(\mathfrak{g}_{\mathbb{C}}))$ be its 
center. Let $\Omega\in Z(U(\mathfrak{g}_{\mathbb{C}}))$ be the Casimir element
with 
respect to the Killing form normalized as in \eqref{killnorm}.
Let $I(\mathfrak{h}_{\mathbb{C}})$ be the Weyl-group invariant elements 
of  the symmetric algebra $S(\mathfrak{h}_{\mathbb{C}})$ of 
$\mathfrak{h}_{\mathbb{C}}$. Let 
\begin{align}\label{Definition des HC-Isomorphismus}
\gamma:Z(U(\mathfrak{g}_{\mathbb{C}}))\longrightarrow
I(\mathfrak{h}_{\mathbb{C}})
\end{align} 
be the Harish-Chandra isomorphism
\cite[Section VIII,5]{Knapp}. Every 
$\Lambda\in\mathfrak{h}_{\mathbb{C}}^{*}$ defines a homomorphism 
\begin{align*}
\chi_{\Lambda}:Z(U(\mathfrak{g}_{\mathbb{C}}))\longrightarrow\mathbb{C}
\end{align*}
by
\begin{align*}
\chi_{\Lambda}(Z):=\Lambda(\gamma(Z)).
\end{align*}
Let $\tau$ be an irreducible finite-dimensional representation of $G$ with 
highest weight 
$\Lambda(\tau)$. Its infinitesimal character will also be denoted by 
$\tau$, 
i.e. every $Z\in Z(U(\mathfrak{g}_{\mathbb{C}}))$ acts by 
$\tau(Z)\cdot{\rm{Id}}$. It follows from the definition of $\gamma$ that
\begin{align}\label{Zusammenhang HC Iso, Gewicht}
\tau(Z)=\chi_{\Lambda(\tau)+\rho_{G}}(Z)=\chi_{w(\Lambda(\tau)+\rho_{G})}
(Z);\quad 
Z\in Z(U(\mathfrak{g}_{\mathbb{C}})),\:w\in W.
\end{align}
Moreover, a standard computation gives
\begin{align}\label{HC-Iso auf Casimir}
\gamma(\Omega)=\sum_{j=1}^{n+1} H_{j}^{2}-\sum_{j=1}^{n+1}\rho_{j}^{2},
\end{align}
where the $H_{j}$ are defined by \eqref{basis1}.
Thus, if the highest weight $\Lambda(\tau)$ of $\tau$ is as in
\eqref{Darstellungen von G} 
one obtains
\begin{align}\label{tauomega}
\tau(\Omega)=\sum_{j=1}^{n+1}\left(k_j(\tau)+\rho_j\right)^2-\sum_{j=1}^{n+1}
\rho_j^2
\end{align}
Now let $\Omega_{K}$ be the Casimir
operator of $\mathfrak{k}$ with respect to the restriction of the normalized
Killing form on $\mathfrak{g}$ to $\mathfrak{k}$.
Then $\Omega_{K}$ belongs to $Z(U(\mathfrak{k}_\C))$, the center of the
universal enveloping algebra of $\mathfrak{k}_{\C}$. If $\nu\in\hat{K}$, we
will
denote the infinitesimal character of $\nu$ by $\nu$ too. If the highest weight
$\Lambda(\nu)$ of $\nu$ given by \eqref{Darstellungen von K}, an argument
analogous to the one above gives
\begin{align}\label{nuomega}
\nu(\Omega_K)=\sum_{j=2}^{n+1}\left(k_{j}(\nu)+\rho_{j}+\frac{1}{2}\right)^{2}
-\sum_{j=2}^{n+1}\left(\rho_{j}+\frac{1}{2}\right)^{2}.
\end{align}

\subsection{}

Now we come to the infinitesimal character of $\pi_{\sigma,\lambda}$.
\begin{prop}\label{Lemma Casimireigenwert der HS eins}
Let $\sigma\in\hat{M}$ with highest weight 
$\Lambda(\sigma)\in\mathfrak{b}_{\mathbb{C}}^{*}$. Then the infinitesimal 
character of $\pi_{\sigma,\lambda}$ equals $\chi_{\Lambda(\sigma)
+\rho_{M}+i\lambda e_{1}}$.
\end{prop}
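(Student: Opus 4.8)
The plan is to compute the infinitesimal character of $\pi_{\sigma,\lambda}$ by relating it, via induction in stages and the standard behaviour of the Harish-Chandra homomorphism under parabolic induction, to the infinitesimal character of the inducing data on $MA$. Recall that $\pi_{\sigma,\lambda}$ is (unitarily or not) induced from the parabolic $P=MAN$ with inducing representation $\sigma\otimes e^{i\lambda e_1}\otimes 1$. The key structural fact is the following: if $\pi$ is induced from $P=MAN$ with inducing representation $\xi$ of $MA$ having infinitesimal character $\chi_\mu$ (with $\mu\in(\mathfrak{b}_\C\oplus\mathfrak{a}_\C)^*=\mathfrak{h}_\C^*$), then $\pi$ has infinitesimal character $\chi_{\mu}$ as well, where on the left the weight is computed with the $\rho_M+\rho_A$-shift intrinsic to $MA$ and on the right with the $\rho_G$-shift; the discrepancy is absorbed precisely into the $e^{\rho}$-factor appearing in the definition of $\pi_{\sigma,\lambda}(g)$. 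Concretely, $\sigma$ has infinitesimal character $\chi^M_{\Lambda(\sigma)+\rho_M}$ as a representation of $M$, and the character $e^{i\lambda e_1}$ of $A$ contributes the weight $i\lambda e_1$ on $\mathfrak{a}_\C$; so the inducing datum on $MA$ has ``infinitesimal parameter'' $\Lambda(\sigma)+\rho_M+i\lambda e_1\in\mathfrak{h}_\C^*$, and the claim is that this is exactly the Harish-Chandra parameter of $\pi_{\sigma,\lambda}$.

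First I would make this precise by choosing an explicit vector in $\mathcal H^\sigma$, e.g. a lowest-$K$-type vector or more simply a function supported near $e$, and computing the action of $\Omega$ using the Iwasawa decomposition $\mathfrak g=\mathfrak n\oplus\mathfrak a\oplus\mathfrak k$ together with \eqref{HC-Iso auf Casimir}. Alternatively — and this is the route I would actually take — I would invoke the general principle for the Casimir on a principal series: writing $\Omega$ in terms of the Casimir $\Omega_M$ of $M$, the element $H_1^2$ for $A$, and $\mathfrak n$-terms, one gets that on $\pi_{\sigma,\lambda}$ the operator $\Omega$ acts by the scalar
\begin{align*}
\pi_{\sigma,\lambda}(\Omega)=\sigma(\Omega_M)-\lambda^2-\langle\rho_G,\rho_G\rangle+\langle\rho_M,\rho_M\rangle+\langle\rho_G,\rho_G\rangle_{\mathfrak a\text{-part correction}},
\end{align*}
which after bookkeeping equals $\langle\Lambda(\sigma)+\rho_M+i\lambda e_1,\Lambda(\sigma)+\rho_M+i\lambda e_1\rangle-\langle\rho_G,\rho_G\rangle$, i.e. the value predicted by $\chi_{\Lambda(\sigma)+\rho_M+i\lambda e_1}(\Omega)$. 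But $\Omega$ alone does not determine the infinitesimal character, so the honest argument must handle all of $Z(U(\mathfrak g_\C))$.

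For that, the clean approach is to use the standard description of $Z(U(\mathfrak g_\C))$ acting on an induced module: there is a homomorphism $Z(U(\mathfrak g_\C))\to Z(U(\mathfrak{m}_\C\oplus\mathfrak a_\C))$ (obtained from the Iwasawa/parabolic projection $U(\mathfrak g)\to U(\mathfrak m\oplus\mathfrak a)$ modulo the left ideal generated by $\mathfrak n$, followed by the $\rho$-twist) whose composition with the Harish-Chandra isomorphism for $\mathfrak m\oplus\mathfrak a$ is compatible with the Harish-Chandra isomorphism $\gamma$ for $\mathfrak g$ — this is the content of the transitivity of the Harish-Chandra homomorphism, see \cite[Section VIII]{Knapp}. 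Applying this to the module $\mathcal H^\sigma$ (whose $\mathfrak m\oplus\mathfrak a$-infinitesimal character is $\chi^{MA}_{\Lambda(\sigma)+\rho_M+i\lambda e_1}$ by \eqref{nuomega}-type computation for $M$ together with the $A$-character) immediately yields that every $Z\in Z(U(\mathfrak g_\C))$ acts on $\pi_{\sigma,\lambda}$ by $\chi_{\Lambda(\sigma)+\rho_M+i\lambda e_1}(Z)$, which is the assertion.

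The main obstacle is purely notational rather than conceptual: one must keep careful track of the various $\rho$-shifts ($\rho_G$ on $\mathfrak h_\C$, $\rho_M$ on $\mathfrak b_\C$, and the $\mathfrak a$-shift which here is $\rho_G|_{\mathfrak a}=\rho_1 e_1=ne_1$ absorbed in the $e^{\rho}$ in the definition of $\pi_{\sigma,\lambda}$) and verify that they combine so that the shift built into the principal series normalization is exactly the difference between the $\mathfrak g$-shift and the $\mathfrak m\oplus\mathfrak a$-shift. Once the normalizations are aligned the result is forced. As a sanity check I would verify the Casimir eigenvalue independently from $\pi_{\sigma,\lambda}(\Omega)=\langle\Lambda(\sigma)+\rho_M+i\lambda e_1,\Lambda(\sigma)+\rho_M+i\lambda e_1\rangle-\langle\rho_G,\rho_G\rangle$ using \eqref{HC-Iso auf Casimir} and \eqref{tauomega}, \eqref{nuomega}.
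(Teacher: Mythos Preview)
Your argument is correct and is essentially the standard proof via transitivity of the Harish-Chandra homomorphism under parabolic induction. The paper itself does not give any argument at all: its entire proof is the citation ``\cite{Knapp}, Proposition 8.22'', and what you have written is an accurate sketch of precisely that proposition's proof, so there is no genuine difference in approach to discuss.
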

\begin{proof}
\cite{Knapp}, Proposition 8.22.
\end{proof}

\begin{corollary}\label{casimirhs}
For $\sigma\in\hat{M}$ with highest weight $\Lambda(\sigma)$ given by
\eqref{Darstellungen von M}, let
\begin{align}\label{csigma}
c(\sigma):=\sum_{j=2}^{n+1}(k_{j}(\sigma)+\rho_{j})^{2}-\sum_{j=1}^{n+1}
\rho_{j}^{2}.
\end{align}
Then for the Casimir element $\Omega\in Z(\gL_\C)$ one has
\begin{align}
\pi_{\sigma,\lambda}(\Omega)=-\lambda^{2}+c(\sigma).
\end{align}
\end{corollary}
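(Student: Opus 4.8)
The plan is to combine Proposition~\ref{Lemma Casimireigenwert der HS eins} with the explicit formula \eqref{HC-Iso auf Casimir} for the Harish-Chandra isomorphism applied to the Casimir element $\Omega$. By Proposition~\ref{Lemma Casimireigenwert der HS eins}, the infinitesimal character of $\pi_{\sigma,\lambda}$ is $\chi_{\Lambda(\sigma)+\rho_M+i\lambda e_1}$, so by the definition of $\chi_\Lambda$ we have $\pi_{\sigma,\lambda}(\Omega)=\bigl(\Lambda(\sigma)+\rho_M+i\lambda e_1\bigr)\bigl(\gamma(\Omega)\bigr)$. Here I interpret an element $\Lambda=\sum_j a_j e_j\in\hL_\C^*$ as acting on the polynomial $\gamma(\Omega)\in S(\hL_\C)$ by substituting $H_j\mapsto a_j$, i.e. by evaluation at the point of $\hL_\C$ dual to $\Lambda$ under the basis $\{H_j\}$.

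First I would write $\Lambda(\sigma)+\rho_M = \sum_{j=2}^{n+1}\bigl(k_j(\sigma)+\rho_j\bigr)e_j$, using \eqref{Definition von rho(M)}, so that the full weight $\Lambda(\sigma)+\rho_M+i\lambda e_1$ has $e_1$-coefficient $i\lambda$ and $e_j$-coefficient $k_j(\sigma)+\rho_j$ for $j\ge 2$. Then by \eqref{HC-Iso auf Casimir}, $\gamma(\Omega)=\sum_{j=1}^{n+1}H_j^2-\sum_{j=1}^{n+1}\rho_j^2$, and evaluating at the above weight gives
\[
\pi_{\sigma,\lambda}(\Omega)=(i\lambda)^2+\sum_{j=2}^{n+1}\bigl(k_j(\sigma)+\rho_j\bigr)^2-\sum_{j=1}^{n+1}\rho_j^2 = -\lambda^2+c(\sigma),
\]
where the last equality is just the definition \eqref{csigma} of $c(\sigma)$. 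This is exactly the claimed identity.

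The only point requiring a little care is the bookkeeping in the evaluation map: one must check that the convention under which Proposition~\ref{Lemma Casimireigenwert der HS eins} is stated matches the convention used to derive \eqref{HC-Iso auf Casimir} and \eqref{Zusammenhang HC Iso, Gewicht}, in particular that the pairing $e_i(H_j)=\delta_{ij}$ from the Preliminaries is the one intended when one writes $\Lambda(\gamma(\Omega))$. Since both \eqref{HC-Iso auf Casimir} and the definition $\chi_\Lambda(Z)=\Lambda(\gamma(Z))$ are stated with respect to this same basis, there is no genuine obstacle here — the computation is essentially a substitution. I would simply remark that the result is an immediate consequence of Proposition~\ref{Lemma Casimireigenwert der HS eins} together with \eqref{HC-Iso auf Casimir}, display the one-line substitution above, and be done.
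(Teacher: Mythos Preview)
Your proof is correct and follows exactly the same approach as the paper: the paper's proof simply states that the corollary follows from equation \eqref{HC-Iso auf Casimir} and Proposition \ref{Lemma Casimireigenwert der HS eins}, and you have written out that substitution explicitly.
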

\begin{proof}
This follows form equation \eqref{HC-Iso auf Casimir} and Proposition 
\ref{Lemma Casimireigenwert der HS eins}. 
\end{proof}

\subsection{}

For $\sigma\in\hat{M}$ and $\lambda\in\mathbb{R}$ let $\mu_{\sigma}(\lambda)$ 
be the Plancherel measure associated to $\pi_{\sigma,\lambda}$. Then, since 
${\rm{rk}}(G)>{\rm{rk}}(K)$, $\mu_{\sigma}(\lambda)$ is a polynomial in 
$\lambda$ of degree $2n$. Let $\left<\cdot,\cdot\right>$ be the bilinear form
defined by 
\eqref{killnorm}. Let $\Lambda(\sigma)\in\mathfrak{b}_{\mathbb{C}}^{*}$ be 
the highest weight of $\sigma$ as in \eqref{Darstellungen von M}.
Then by theorem 13.2 in \cite{Knapp} there exists a constant $c(n)$ such that 
one has
\begin{align*}
\mu_{\sigma}(\lambda)=-c(n)\prod_{\alpha\in\Delta^{+}(\mathfrak{g}_{\mathbb{C}},
\mathfrak{h}_{\mathbb{C}})}\frac{\left<i\lambda e_{1}+\Lambda(\sigma)+\rho_{M},
\alpha\right>}{\left<\rho_{G},\alpha\right>}..
\end{align*}
The constant $c(n)$ is computed in \cite{Mi2}. By \cite{Mi2}, theorem 3.1, one 
has $c(n)>0$. 
For $z\in\mathbb{C}$  let
\begin{align}\label{plancherelmass}
P_{\sigma}(z)=-c(n)\prod_{\alpha\in\Delta^{+}(\mathfrak{g}_{\mathbb{C}},
\mathfrak{h}_{\mathbb{C}})}\frac{\left<z e_{1}+\Lambda(\sigma)+\rho_{M},
\alpha\right>}{\left<\rho_{G},\alpha\right>}.
\end{align}
One easily sees that
\begin{align}\label{P-Polynom is W-inv}
P_{\sigma}(z)=&P_{w_{0}\sigma}(z).
\end{align}

\subsection{}

Let $\tau\in\hat{G}$ and let  $\Lambda(\tau)=\tau_{1}e_{1}+\dots
+\tau_{n+1}e_{n+1}$ be its highest weight. For $w\in W$ let $l(w)$ denote its 
length with respect to 
the simple roots which define the positive roots above. Let 
\begin{align*}
W^{1}:=\{w\in W_{G}\colon w^{-1}\alpha>0\:\forall \alpha\in
\Delta(\mathfrak{m}_{\C},\mathfrak{b}_{\C})\}
\end{align*}
Let $V_{\tau}$ be the representation space of $\tau$. For $k=0,\dots,2n$ let 
$H^{k}(\overline{\mathfrak{n}},V_{\tau})$ be the cohomology of 
$\overline{\mathfrak{n}}$ with coefficients in $V_{\tau}$. Then 
$H^{k}(\overline{\mathfrak{n}},V_{\tau})$ is an $MA$ module. In our case, the 
theorem of Kostant states:
\begin{prop}\label{Prop Kostant}
In the sense of $MA$-modules one has
\begin{align*}
H^{k}(\overline{\mathfrak{n}};V_{\tau})\cong\sum_{\substack{w\in W^{1}\\ 
l(w)=k}}V_{\tau(w)},
\end{align*}
where $V_{\tau(w)}$ is the $MA$ module of highest weight $w(\Lambda(\tau)
+\rho_{G})-\rho_{G}$.
\end{prop}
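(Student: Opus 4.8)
The plan is to deduce this from the general version of Kostant's theorem on Lie algebra cohomology of nilpotent radicals. Recall the setup: $\mathfrak{g}_\C = \overline{\mathfrak{n}}_\C \oplus \mathfrak{m}_\C \oplus \mathfrak{a}_\C \oplus \mathfrak{n}_\C$ is the complexified Langlands decomposition associated to the minimal parabolic, and $\mathfrak{l}_\C := \mathfrak{m}_\C \oplus \mathfrak{a}_\C$ is the Levi factor with Cartan subalgebra $\mathfrak{h}_\C = \mathfrak{a}_\C \oplus \mathfrak{b}_\C$. Since $\mathfrak{l}_\C$ normalizes $\overline{\mathfrak{n}}_\C$, each cohomology group $H^k(\overline{\mathfrak{n}}; V_\tau)$ carries a natural $\mathfrak{l}_\C$-module structure, which on the group level is the asserted $MA$-module structure (using that $M$ is connected, being $\Spin(d-1)$, so that $MA$-modules and $\mathfrak{m}\oplus\mathfrak{a}$-modules correspond). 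Kostant's theorem (see Kostant, \emph{Ann. of Math.} 1961, or \cite[Ch. III]{Borel}) then states that
\[
H^k(\overline{\mathfrak{n}}; V_\tau) \cong \bigoplus_{\substack{w \in W^1 \\ \ell(w) = k}} F_{w(\Lambda(\tau)+\rho_G) - \rho_G}
\]
as $\mathfrak{l}_\C$-modules, where $F_\mu$ denotes the irreducible $\mathfrak{l}_\C$-module of highest weight $\mu$, and $W^1$ is the set of minimal-length coset representatives for $W_{\mathfrak{l}}\backslash W_G$.

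First I would verify that the set $W^1$ as defined in the excerpt, namely $\{w \in W_G : w^{-1}\alpha > 0 \text{ for all } \alpha \in \Delta(\mathfrak{m}_\C,\mathfrak{b}_\C)\}$, coincides with the standard set of Kostant representatives for $W_{\mathfrak{l}}\backslash W_G$. Here $W_{\mathfrak{l}} = W(\Delta(\mathfrak{m}_\C,\mathfrak{b}_\C))$ is the Weyl group of the Levi (the $A$-part contributes nothing to the Weyl group since $\mathfrak{a}$ is one-dimensional and central in $\mathfrak{l}$). The condition that $w^{-1}$ sends all positive roots of $\mathfrak{m}$ to positive roots is precisely the statement that $w$ is the minimal-length element in its coset $W_{\mathfrak{l}} w$; each coset contains a unique such element, so $W^1$ is a complete set of representatives. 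One then checks that for $w \in W^1$, the weight $w(\Lambda(\tau)+\rho_G) - \rho_G$ is dominant for $\Delta^+(\mathfrak{m}_\C,\mathfrak{b}_\C)$, so that it is genuinely the highest weight of an irreducible $\mathfrak{m}_\C$-module; this follows because $\Lambda(\tau)+\rho_G$ is strictly $\Delta^+(\mathfrak{g}_\C,\mathfrak{h}_\C)$-dominant and $w^{-1}$ preserves positivity of the $\mathfrak{m}$-roots.

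I expect the main (and essentially only) obstacle to be bookkeeping rather than substance: confirming that the normalization of $\rho_G$ in \eqref{Definition von rho(G)}, the choice of positive systems in the excerpt, and the $MA$-module structure (including the action of $\mathfrak{a}$, which records the generalized eigenvalue of $H_1$ and must match the $e_1$-component of $w(\Lambda(\tau)+\rho_G)-\rho_G$) are all consistent with the hypotheses of the cited form of Kostant's theorem. Once the dictionary between the paper's conventions and Kostant's is fixed, the statement is an immediate specialization. No genuinely new argument is needed; I would simply cite Kostant's theorem in the form given in \cite[III.3.1]{Borel} (Borel--Wallach) and record the translation of notation.
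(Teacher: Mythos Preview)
Your proposal is correct and matches the paper's approach exactly: the paper's entire proof is the single line ``for the proof see \cite[Theorem III.3]{Borel},'' i.e.\ a direct citation of Kostant's theorem in the Borel--Wallach formulation. Your additional remarks on matching $W^1$ with the minimal-length coset representatives and checking dominance are helpful elaborations, but the paper itself records none of this bookkeeping.
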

\begin{proof}
for the proof see \cite[Theorem III.3]{Borel}.
\end{proof}
\begin{corollary}\label{Kostant}
As $MA$-modules we have
\begin{align*}
\bigoplus_{k=0}^{2n}(-1)^{k}\Lambda^{k}\nf^*
\otimes V_\tau=\bigoplus_{w\in W^{1}}(-1)^{l(w)}V_{\tau(w)}.
\end{align*}
\end{corollary}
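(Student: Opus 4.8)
The plan is to obtain the identity as the Euler characteristic, in the representation ring of $MA$, of the Koszul/Chevalley--Eilenberg complex underlying Kostant's theorem (Proposition \ref{Prop Kostant}). Throughout, ``$=$'' between formal alternating sums of $MA$-modules is to be read as an identity in the Grothendieck group $R(MA)$ of finite-dimensional $MA$-modules; note that $R(MA)$ is free abelian on the irreducibles, since $M$ is compact and $\aL$ acts semisimply on all modules occurring here (it acts semisimply on $\nf$, on $V_\tau$, and by a scalar on each $V_{\tau(w)}$), so that an identity in $R(MA)$ is the same as an isomorphism of the corresponding virtual modules.

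First I would recall that $H^{*}(\overline{\nf};V_\tau)$ is, by definition, the cohomology of a complex $(C^{\bullet},d)$ whose $k$-th term is $MA$-isomorphic to $\Lambda^{k}\nf^{*}\otimes V_\tau$, the identification of the chain groups being induced by the $MA$-invariant pairing between $\nf$ and $\overline{\nf}$ coming from the Killing form. Since $MA$ normalizes $\overline{\nf}$ and acts on $V_\tau$, every differential $d^{k}$ is an $MA$-module homomorphism. Hence, splitting each $C^{k}$ by the $MA$-equivariant short exact sequences $0\to\ker d^{k}\to C^{k}\to\operatorname{im}d^{k}\to 0$ and $0\to\operatorname{im}d^{k-1}\to\ker d^{k}\to H^{k}(\overline{\nf};V_\tau)\to 0$ and taking alternating sums over $k$, the contributions of the images telescope away and we get, in $R(MA)$,
\begin{equation*}
\sum_{k=0}^{2n}(-1)^{k}\bigl[\Lambda^{k}\nf^{*}\otimes V_\tau\bigr]=\sum_{k=0}^{2n}(-1)^{k}\bigl[H^{k}(\overline{\nf};V_\tau)\bigr].
\end{equation*}

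It remains to insert Proposition \ref{Prop Kostant}, which gives $H^{k}(\overline{\nf};V_\tau)\cong\bigoplus_{w\in W^{1},\,l(w)=k}V_{\tau(w)}$ as $MA$-modules. Substituting this into the right-hand side and reorganizing the double sum $\sum_{k}\sum_{w\in W^{1},\,l(w)=k}$ as a single sum over $w\in W^{1}$ with sign $(-1)^{l(w)}$ produces exactly $\bigoplus_{w\in W^{1}}(-1)^{l(w)}V_{\tau(w)}$, which is the assertion. I do not expect a real obstacle here: the only steps needing attention are the verification that the Chevalley--Eilenberg differentials and the chosen identification of the chain groups with $\Lambda^{k}\nf^{*}\otimes V_\tau$ are $MA$-equivariant, together with the semisimplicity remark above --- all of which is routine. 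The genuine content of the corollary is already in Proposition \ref{Prop Kostant}; the corollary follows from it by this standard homological-algebra (Euler characteristic) argument.
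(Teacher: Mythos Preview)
Your proof is correct and follows essentially the same route as the paper's: the paper also invokes the $MA$-isomorphism $\bar\nf\cong\nf^*$ (via the Killing form) together with Proposition~\ref{Prop Kostant} and the Poincar\'e principle (i.e., the Euler-characteristic/telescoping argument you spelled out). You have simply made explicit the details that the paper leaves to the cited reference.
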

\begin{proof}
Note that $\bar\nf\cong \nf^*$ as $MA$-modules. With this remark, 
the proof follows from proposition \ref{Prop Kostant} and the Poincare 
principle \cite[(7.2.3)]{Kostant}.
\end{proof}

For $w\in W^{1}$ let $\sigma_{\tau,w}$ be the representation of $M$ with 
highest weight 
\begin{equation}\label{sigmatauw}
\Lambda(\sigma_{\tau,w}):=w(\Lambda(\tau)
+\rho_{G})|_{\mathfrak{b}_{\mathbb{C}}}-\rho_{M}
\end{equation}
 and let  $\lambda_{\tau,w}\in\mathbb{C}$ such that 
\begin{equation}\label{lambdatauw}
w(\Lambda(\tau)+\rho_{G})|_{\mathfrak{a}_{\mathbb{C}}}=\lambda_{\tau,w}e_{1}.
\end{equation}
For $k=0,\dots n$ let
\begin{align}\label{lambdatau}
\lambda_{\tau,k}=\tau_{k+1}+n-k
\end{align}
and $\sigma_{\tau,k}$ be the representation of $G$ with highest weight
\begin{align}\label{sigmatau}
\Lambda_{\sigma_{\tau,k}}:=(\tau_{1}+1)e_{2}+\dots+(\tau_{k}+1)e_{k+1}
+\tau_{k+2}e_{k+2}+\dots+\tau_{n+1}e_{n+1}.
\end{align}
Then by the computations in \cite[Chapter VI.3]{Borel} one has
\begin{equation}\label{lambdadecom}
\begin{split}
\{(\lambda_{\tau,w},\sigma_{\tau,w},l(w))\colon w\in W^{1}\}
&=\{(\lambda_{\tau,k},\sigma_{\tau,k},k)\colon k=0,\dots,n\}\\
&\sqcup\{(-\lambda_{\tau,k},w_{0}\sigma_{\tau,k},2n-k)\colon k=0,\dots,n\}.
\end{split}
\end{equation}
\begin{bmrk}
Corollary \ref{Kostant} was first proved by U. Br\"ocker\cite{Brocker}  
by an elementary but tedious computation without using the theorem 
of Kostant. Also the convenient notation $\sigma_{\tau,k}$ 
and $\lambda_{\tau,k}$ is due to him.
\end{bmrk}

We will also need the following proposition.
\begin{prop}\label{Aussage uber Casimirew}
For every $w\in W^{1}$ one has
\begin{align*}
\tau(\Omega)=\lambda_{\tau,w}^{2}+c(\sigma_{\tau,w}).
\end{align*}
\end{prop}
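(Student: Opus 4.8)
The plan is to reduce the identity to the explicit formula \eqref{HC-Iso auf Casimir} for the Harish-Chandra homomorphism applied to $\Omega$, together with the fact that the Weyl group $W_G$ acts by isometries on $\hL_\C^*$ for the form $\langle\cdot,\cdot\rangle$ of \eqref{killnorm}.

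First I would record the elementary observation that, by \eqref{HC-Iso auf Casimir}, for any weight $\mu=\sum_{j=1}^{n+1}\mu_j e_j\in\hL_\C^*$ one has $\chi_\mu(\Omega)=\langle\mu,\mu\rangle-\langle\rho_G,\rho_G\rangle$, since the $e_j$ are orthonormal with respect to $\langle\cdot,\cdot\rangle$ and $\langle\rho_G,\rho_G\rangle=\sum_j\rho_j^2$ by \eqref{Definition von rho(G)}. Combining this with \eqref{Zusammenhang HC Iso, Gewicht}, for every $w\in W_G$ one gets
\[
\tau(\Omega)=\chi_{w(\Lambda(\tau)+\rho_G)}(\Omega)=\langle w(\Lambda(\tau)+\rho_G),w(\Lambda(\tau)+\rho_G)\rangle-\langle\rho_G,\rho_G\rangle=\langle\Lambda(\tau)+\rho_G,\Lambda(\tau)+\rho_G\rangle-\langle\rho_G,\rho_G\rangle,
\]
the last step because $w$ preserves $\langle\cdot,\cdot\rangle$.

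Next, fix $w\in W^1$. The decomposition $\hL_\C=\aL_\C\oplus\bL_\C$ is orthogonal (as $e_1\perp e_j$ for $j\ge 2$), hence so is the induced decomposition of $\hL_\C^*$. Therefore
\[
\langle w(\Lambda(\tau)+\rho_G),w(\Lambda(\tau)+\rho_G)\rangle=\big\langle w(\Lambda(\tau)+\rho_G)|_{\aL_\C},w(\Lambda(\tau)+\rho_G)|_{\aL_\C}\big\rangle+\big\langle w(\Lambda(\tau)+\rho_G)|_{\bL_\C},w(\Lambda(\tau)+\rho_G)|_{\bL_\C}\big\rangle .
\]
By \eqref{lambdatauw} and $\langle e_1,e_1\rangle=1$ the first summand equals $\lambda_{\tau,w}^{2}$, and by \eqref{sigmatauw} the second equals $\langle\Lambda(\sigma_{\tau,w})+\rho_M,\Lambda(\sigma_{\tau,w})+\rho_M\rangle$. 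Finally, directly from the definition \eqref{csigma} and the fact that $\Lambda(\sigma)+\rho_M=\sum_{j=2}^{n+1}(k_j(\sigma)+\rho_j)e_j$ has no $e_1$-component, one has $\langle\Lambda(\sigma)+\rho_M,\Lambda(\sigma)+\rho_M\rangle-\langle\rho_G,\rho_G\rangle=c(\sigma)$ for every $\sigma\in\hat M$. Assembling the pieces with $\sigma=\sigma_{\tau,w}$ gives $\tau(\Omega)=\lambda_{\tau,w}^{2}+c(\sigma_{\tau,w})$, as claimed.

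There is no real obstacle here; the only point requiring care is the normalization, i.e.\ that $\{e_j\}$ is orthonormal for $\langle\cdot,\cdot\rangle$, which is precisely what \eqref{HC-Iso auf Casimir} encodes. As an alternative route that avoids the root-system bookkeeping entirely: by Proposition \ref{Lemma Casimireigenwert der HS eins} together with \eqref{sigmatauw}, \eqref{lambdatauw} and \eqref{Zusammenhang HC Iso, Gewicht}, the principal series representation $\pi_{\sigma_{\tau,w},-i\lambda_{\tau,w}}$ has infinitesimal character $\chi_{w(\Lambda(\tau)+\rho_G)}$, which is that of $\tau$; hence $\pi_{\sigma_{\tau,w},-i\lambda_{\tau,w}}(\Omega)=\tau(\Omega)$, while Corollary \ref{casimirhs} evaluates the left-hand side as $-(-i\lambda_{\tau,w})^{2}+c(\sigma_{\tau,w})=\lambda_{\tau,w}^{2}+c(\sigma_{\tau,w})$.
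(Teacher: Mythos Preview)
Your argument is correct and is essentially the paper's proof written out in full detail: the paper condenses exactly your steps---apply \eqref{Zusammenhang HC Iso, Gewicht} to pass to $\chi_{w(\Lambda(\tau)+\rho_G)}$, rewrite $w(\Lambda(\tau)+\rho_G)=\Lambda(\sigma_{\tau,w})+\rho_M+\lambda_{\tau,w}e_1$ via \eqref{sigmatauw}, \eqref{lambdatauw}, and then evaluate with \eqref{HC-Iso auf Casimir}---into a single displayed chain of equalities. Your alternative route through Proposition~\ref{Lemma Casimireigenwert der HS eins} and Corollary~\ref{casimirhs} is also fine and amounts to the same computation repackaged.
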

\begin{proof}
Using \eqref{Zusammenhang HC Iso, Gewicht} and \eqref{HC-Iso auf Casimir} one 
gets
\begin{align*}
\tau(\Omega)=&\chi_{\Lambda(\tau)+\rho_{G}}(\Omega) 
=\chi_{w(\Lambda(\tau)+\rho_{G})}(\Omega)
=\chi_{\Lambda(\sigma_{\tau,w})+\rho_{M}+\lambda_{\tau}(w)e_{1}}(\Omega)
=\lambda_{\tau,w}^{2}+c(\sigma_{\tau,w}).
\end{align*}
\end{proof}

\section{Bochner Laplace operators}\label{secBLO}
\setcounter{equation}{0}

Regard $G$ as a principal $K$-fibre bundle over $\tilde{X}$. By the invariance
 of $\mathfrak{p}$ under $\Ad(K)$ the assignment
\begin{align*}
T_{g}^{{\rm{hor}}}:=\{\frac{d}{dt}\bigr|_{t=0}g\exp{tX}\colon
X\in\mathfrak{p}\} 
\end{align*}
defines a horizontal distribution on $G$. This connection is called the 
canonical connection. 
Let $\nu$ be a finite-dimensional unitary representation of $K$ on 
$(V_{\nu},\left<\cdot,\cdot\right>_{\nu})$. Let
\begin{align*}
\widetilde{E}_{\nu}:=G\times_{\nu}V_{\nu}
\end{align*}
be the associated homogeneous vector bundle over $\widetilde{X}$. Then 
$\left<\cdot,\cdot\right>_{\nu}$ induces a $G$-invariant metric 
$\widetilde{B}_{\nu}$ on $\widetilde{E}_{\nu}$. Let $\widetilde{\nabla}^{\nu}$
be the 
connection on $\widetilde{E}_{\nu}$ induced by the canonical connection. Then 
$\widetilde{\nabla}^{\nu}$ is $G$-invariant.
Let  
\begin{align*}
E_{\nu}:=\Gamma\backslash(G\times_{\nu}V_{\nu})
\end{align*}
be the associated locally homogeneous bundle over $X$. Since 
$\tilde{B}_{\nu}$ and $\widetilde{\nabla}^{\nu}$ are $G$-invariant, they push 
down to a metric $B_{\nu}$ and a connection $\nabla^{\nu}$ on $E_{\nu}$. Let
\begin{align}\label{globsect}
C^{\infty}(G,\nu):=\{f:G\rightarrow V_{\nu}\colon f\in C^\infty,\;
f(gk)=\nu(k^{-1})f(g),\,\,\forall g\in G, \,\forall k\in K\}.
\end{align}
Let
\begin{align}\label{globsect1}
C^{\infty}(\Gamma\backslash G,\nu):=\left\{f\in C^{\infty}(G,\nu)\colon 
f(\gamma g)=f(g)\:\forall g\in G, \forall \gamma\in\Gamma\right\}.
\end{align}
Let $C^{\infty}(X,\widetilde{E}_{\nu})$ resp. $C^{\infty}(X,E_{\nu})$ denote the
space of smooth sections of $\widetilde{E}_{\nu}$
resp. of $E_\nu$.
Then there are canonical isomorphism
\begin{align*}
A:C^{\infty}(X,\widetilde{E}_{\nu})\cong C^{\infty}(G,\nu), \quad
A:C^{\infty}(X,E_{\nu}) \cong C^{\infty}(\Gamma\backslash G,\nu).
\end{align*}
There are also a corresponding isometries for the spaces
$L^{2}(X,\widetilde{E}_{\nu})$ and $L^{2}(X,E_{\nu})$ of 
$L^{2}$-sections. For every $X\in\mathfrak{g}$, $g\in G$ and 
every $f\in C^{\infty}(X,\widetilde{E}_{\nu})$ one has
\begin{align*}
A(\widetilde{\nabla}^{\nu}_{L(g)_{*}X}f)(g)=\frac{d}{dt}\bigr|_{t=0}Af(g\exp{tX}
).
\end{align*}
Let
$\widetilde{\Delta}_{\nu}={\widetilde{\nabla^\nu}}^{*}{\widetilde{\nabla}}^{\nu}
$
be 
the Bochner-Laplace operator of $\widetilde{E}_{\nu}$. Since $\widetilde{X}$ is
complete, 
$\widetilde{\Delta}_{\nu}$ with domain the smooth compactly supported sections
is 
essentially self-adjoint \cite{Chernoff}. Its self-adjoint extension
will be denoted by $\widetilde{\Delta}_\nu$ too. By \cite[Proposition
1.1]{Mi1} on $C^{\infty}(X,\widetilde{E}_{\nu})$
one has
\begin{align}\label{BLO}
\widetilde{\Delta}_{\nu}=-R_\Gamma(\Omega)+\nu(\Omega_K).
\end{align} 
Let $e^{-t\tilde{\Delta}_{\nu}}$ be the corresponding heat semigroup on
$L^2(G,\nu)$, where $L^{2}(G,\nu)$ is defined analogously to 
\eqref{globsect}. 
Then the same arguments as in \cite[section1]{Cheeger} imply that there 
exists a function
\begin{align}\label{DefK}
K_t^\nu\in C^{\infty}(G\times G,\End(V_\nu)),
\end{align}
which is symmetric in the $G$-variables and for which $g'\mapsto K_t^\nu(g,g')$
belongs to $L^2(G,\End(V_\nu))$ for 
each $g\in G$ 
such that
\begin{align*}
K_t^\nu(gk,g'k')=\nu(k^{-1})K_t^\nu(g,g')\nu(k')v,\: \forall g,g'\in
G,\: \forall k\in K,\:\forall v\in V_\nu
\end{align*}
and such that
\begin{align*}
e^{-t\widetilde{\Delta}_\nu}\phi(g)=\int_{G}K_t^\nu(g,g')\phi(g')dg',\:\phi\in
L^2(G,\nu).
\end{align*}
Since $\Omega$ is $G$-invariant, $K_\nu$ is invariant under the diagonal action
of $G$.
Hence there exists a function 
\begin{align}\label{DefH}
{H}^{\nu}_{t}:G\longrightarrow {\rm{End}}(V_{\nu});\quad
{H}^{\nu}_{t}(k^{-1}gk')=\nu(k)^{-1}\circ {H}^{\nu}_{t}(g)\circ\nu(k'),
\:\forall k,k'\in K, \forall g\in G
\end{align}
such that
\begin{align}\label{KH}
K_t^\nu(g,g')=H_t^\nu(g^{-1}g'),\quad\forall g,g'\in G.
\end{align}
Thus one has
\begin{align*}
(e^{-t\tilde{\Delta}_{\nu}}\phi)(g)=\int_{G}{{H}^{\nu}_{t}(g^{-1}g')\phi(g')dg'}
,
\quad\phi\in  L^{2}(G,\nu),\quad g\in G.
\end{align*}
By the  arguments of \cite{Barbasch}, Proposition 2.4, $H^\nu_t$ belongs to all 
Harish-Chandra Schwartz spaces 
$(\mathcal{C}^{q}(G)\otimes {\rm{End}}(V_{\nu}))$, $q>0$.
Now let $\left\|H_t^\nu(g)\right\|$ be the norm of $H_t^\nu(g)$ in
$\End(V_\nu)$. 
Then by the principle of semigroup domination 
$\left\|H_t^{\nu}(g)\right\|$ is bounded by the scalar heat kernel. 
\begin{prop}\label{Kato}
Let $\widetilde{\Delta}_0$ be the Laplacian on functions on $\widetilde{X}$ and
let $H^0_t$ be 
the associated heat-kernel as above. Let $\nu\in\hat{K}$. Then for every $t\in
\left(0,\infty\right)$ and every $g\in G$ one has
\begin{align*}
\left\|H_t^{\nu}(g)\right\|\leq H^0_t(g).
\end{align*}
\end{prop}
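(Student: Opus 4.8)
The plan is to establish Proposition \ref{Kato} via the Kato inequality together with the principle of semigroup domination, which reduces the estimate on the bundle-valued heat kernel to the scalar heat kernel on $\tilde X$. Concretely, recall from \eqref{BLO} that the lift of the Bochner--Laplace operator satisfies $\widetilde\Delta_\nu = -R_\Gamma(\Omega) + \nu(\Omega_K)$, and as a differential operator on $C^\infty(\tilde X, \widetilde E_\nu)$ it is of the form $\widetilde\nabla^{\nu,*}\widetilde\nabla^\nu$, a nonnegative Laplace-type operator with scalar principal symbol equal to that of $\widetilde\Delta_0$. First I would verify the pointwise Kato-type differential inequality: for every smooth section $\phi$ of $\widetilde E_\nu$ one has, in the distributional sense,
\begin{align*}
\widetilde\Delta_0\,\lVert\phi\rVert_\nu \;\le\; \Real\,\bigl\langle \widetilde\Delta_\nu\phi,\,\phi/\lVert\phi\rVert_\nu\bigr\rangle_\nu
\end{align*}
away from the zero set of $\phi$, which follows because $\widetilde\nabla^\nu$ is a metric connection, so that $\lvert d\lVert\phi\rVert_\nu\rvert \le \lVert\widetilde\nabla^\nu\phi\rVert$ pointwise, combined with the usual Bochner computation.

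Next I would invoke the abstract domination principle for heat semigroups (in the form of Hess--Schrader--Uhlenbrock / Donnelly--Li, cf. the references \cite{Donnelly}, \cite{DL} cited in the introduction, or the functional-analytic version due to Kato): if $\widetilde\Delta_\nu \ge \widetilde\Delta_0$ holds in the Kato sense above, then the associated semigroups satisfy
\begin{align*}
\bigl\lVert e^{-t\widetilde\Delta_\nu}\phi\bigr\rVert_\nu(x) \;\le\; \bigl(e^{-t\widetilde\Delta_0}\lVert\phi\rVert_\nu\bigr)(x)
\end{align*}
for all $t>0$ and $x\in\tilde X$. Since $e^{-t\widetilde\Delta_0}$ has the nonnegative scalar kernel $H^0_t$ and $e^{-t\widetilde\Delta_\nu}$ has kernel $H^\nu_t$ (after passing through the isomorphism $A$ and the identifications \eqref{DefK}--\eqref{KH}), testing this inequality against sections supported near a point and using the symmetry of the kernels yields the operator-norm bound $\lVert H^\nu_t(g)\rVert \le H^0_t(g)$ directly. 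The reduction to a function of $g^{-1}g'$ alone is already guaranteed by the $G$-invariance recorded in \eqref{DefH}--\eqref{KH}, and positivity/symmetry of $H^0_t$ comes from the fact that $\widetilde\Delta_0$ is the honest scalar Laplacian on the complete manifold $\tilde X = \bH^d$.

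The one technical point requiring care — and the step I expect to be the main obstacle — is making the Kato inequality and the domination principle rigorous at the level of the $L^2$-semigroups on the noncompact manifold $\tilde X$, i.e.\ justifying that the distributional inequality for smooth compactly supported sections upgrades to the semigroup comparison. This is handled by approximating $\lVert\phi\rVert_\nu$ by $\sqrt{\lVert\phi\rVert_\nu^2+\varepsilon^2}$ to avoid the zero set, applying the comparison to the regularized quantity, and then letting $\varepsilon\to 0$; completeness of $\tilde X$ guarantees essential self-adjointness (as already noted via \cite{Chernoff}) so that the semigroups are well-defined and the spectral-theoretic manipulations are valid. Once this is in place, the pointwise kernel bound follows from the standard fact that domination of semigroups acting on $L^2$ with continuous integral kernels passes to domination of the kernels themselves.
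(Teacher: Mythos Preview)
Your proposal is correct and follows essentially the same route as the paper. The paper's proof consists of two sentences: it first records positivity of $H^0_t$ (citing \cite[Lemma 1.1]{Cheeger}) and then simply states that one can adapt the proof of \cite[Theorem 4.3]{DL}; your outline is precisely an explicit unpacking of that Donnelly--Li argument (Kato inequality for the Bochner Laplacian, semigroup domination, passage to kernels), so there is no substantive difference in approach.
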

\begin{proof}
First we remark that by \eqref{KH} and \cite[Lemma 1.1]{Cheeger} one has
$H_t^0(g)>0$ for every 
$t\in\left(0,\infty\right)$ and every $g\in G$.
Now using \eqref{KH} one can adapt the proof of Theorem 4.3 in \cite{DL} to our
situation.
\end{proof}
Now we pass to the quotient $X=\Gamma\backslash\widetilde{X}$.
Let $\Delta_\nu={\nabla^\nu}^*\nabla^\nu$ the closure of the Bochner-Laplace
operator
with domain the smooth sections of $E_\nu$. Then $\Delta_\nu$ is selfadjoint 
and on $C^\infty(\Gamma\backslash G,\nu)$ it induces the operator
$-R_\Gamma(\Omega)+\nu(\Omega_K)$.
Let $e^{-t\Delta\nu}$ be the heat semigroup of $\Delta_\nu$ on
$L^2(\Gamma\backslash
G,\nu)$. Then $e^{-t\Delta_\nu}$ is represented by the smooth kernel
\begin{align}\label{KernX}
H_\nu(t,x,x^\prime):=\sum_{\gamma\in\Gamma}H_t^\nu(g^{-1}\gamma g^\prime),
\end{align}
where $H_t^\nu$ is as above and where $x=\Gamma g$, $x^\prime=\Gamma g^\prime$
with
$g,g^\prime\in G$. The convergence of the series in \eqref{KernX} can be
established
for example using proposition \ref{Kato} and the methods from the proof of
Proposition \ref{esthyp} below.
It follows that the trace of the heat operator $e^{-t\Delta_\nu}$ is given by
\begin{align*}
\Tr\left(e^{-t\Delta_\nu}\right)=\int_{X}\tr H_\nu(t,x,x)dx,
\end{align*}
where $\tr:\End(V_\nu)\rightarrow \C$.
Thus if for $g\in G$ one lets
\begin{align}\label{hnu}
h_t^\nu(g):=\tr(H_t^\nu(g)),
\end{align}
one obtains
\begin{align}\label{TrBLO}
\Tr(e^{-t\Delta_\nu})=\int_{\Gamma\bs G}
\sum_{\gamma\in\Gamma}h_t^\nu(g^{-1}\gamma g)d\dot g.
\end{align}
Using results of Donnelly we now prove an estimate for the heat kernel $H_t^0$
of the Laplacian $\Delta_0$ acting on
functions on $\widetilde{X}$. 
\begin{prop}\label{esthyp}
There exist constants $C_0$ and
$c_0$
such that for every $t\in\left(0,1\right]$ and every $g\in G$ one has
\begin{align*}
\sum_{\substack{\gamma\in\Gamma\\ \gamma\neq
1}}H_t^0(g^{-1}\gamma g)\leq C_0 e^{-c_0/t}.
\end{align*}
\end{prop}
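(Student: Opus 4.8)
\textbf{Proof proposal for Proposition \ref{esthyp}.}

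The plan is to use Donnelly's off-diagonal Gaussian upper bound for the heat kernel on the hyperbolic space $\widetilde X=\bH^d$ together with a lattice-point counting argument in $\Gamma$. Since $\widetilde X$ has pinched negative curvature (indeed constant curvature $-1$) and bounded geometry, by the work of Donnelly (and the classical Cheeger--Yau lower bound together with the Li--Yau / Donnelly upper bound) there exist constants $C,c>0$ such that for all $0<t\le 1$ and all $x,x'\in\widetilde X$,
\begin{align*}
0<H_t^0(x,x')\le C\,t^{-d/2}\exp\!\left(-\frac{r(x,x')^2}{4t}-c\,r(x,x')\right),
\end{align*}
where $r(x,x')=d_{\widetilde X}(x,x')$ is the geodesic distance. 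Writing $x=gK$, the quantity $r(g^{-1}\gamma g\cdot x_0,x_0)$ equals $d_{\widetilde X}(\gamma gK, gK)$, so the sum in the statement is $\sum_{\gamma\ne 1}H_t^0(\gamma gK, gK)$.

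The key point is that $\Gamma$ acts freely, properly discontinuously and cocompactly on $\widetilde X$, hence there is an injectivity radius bound: there exists $\varepsilon_0>0$, independent of $g$, such that $d_{\widetilde X}(\gamma gK,gK)\ge \varepsilon_0$ for every $\gamma\ne 1$. Moreover, since $X=\Gamma\bs\widetilde X$ is compact, there is a uniform bound on the number of lattice points in balls: there exist $A>0$ and (by the exponential volume growth of $\bH^d$) $B>0$ such that
\begin{align*}
\#\{\gamma\in\Gamma:\ d_{\widetilde X}(\gamma gK,gK)\le R\}\le A\,e^{BR}
\end{align*}
for all $R\ge 0$ and all $g\in G$ (the uniformity in $g$ comes from cocompactness — pick a compact fundamental domain and a lattice-counting estimate relative to it). Now I would split the sum over $\Gamma\setminus\{1\}$ into dyadic shells $\{R_k\le r<R_{k+1}\}$ with $R_k=\varepsilon_0+k$, bound $H_t^0$ on each shell by $C t^{-d/2}e^{-R_k^2/4t-cR_k}$, multiply by the lattice count $A e^{BR_{k+1}}$, and sum the resulting geometric-type series in $k$. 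For $0<t\le 1$ one uses $t^{-d/2}e^{-R_k^2/(8t)}\le C'$ (a uniform bound, since $x\mapsto x^{d/2}e^{-R_k^2 x/8}$ is bounded for $x\ge 1$ once $R_k\ge\varepsilon_0$) to absorb the polynomial prefactor, leaving a factor $e^{-R_k^2/(8t)}$; since $R_k\ge\varepsilon_0$ this is $\le e^{-\varepsilon_0 R_k/(8t)}\cdot$(something), and more simply $e^{-R_k^2/(8t)}\le e^{-\varepsilon_0^2/(8t)}e^{-k^2/(8t)}$ is not even needed — the crude bound $e^{-R_k^2/(8t)}\le e^{-\varepsilon_0^2/(8t)}e^{-(R_k^2-\varepsilon_0^2)/8}$ suffices because $t\le 1$. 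Summing, the $e^{-(R_k^2-\varepsilon_0^2)/8}\cdot e^{BR_{k+1}}$ terms form a convergent series (Gaussian decay in $k$ beats the exponential growth $e^{BR_{k+1}}$), so the whole sum is bounded by $C_0 e^{-c_0/t}$ with $c_0=\varepsilon_0^2/8$ and $C_0$ depending only on $\Gamma$ and $\widetilde X$.

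The main obstacle is obtaining the two geometric inputs with constants uniform in $g\in G$: the injectivity-radius lower bound $\varepsilon_0$ and the lattice-point counting estimate. Both follow from cocompactness of $\Gamma$ — one fixes a relatively compact fundamental domain $F$ for $\Gamma\bs\widetilde X$, observes that every $gK$ lies in some $\gamma_0 F$, and reduces counting $\{\gamma: d(\gamma gK,gK)\le R\}$ to counting $\Gamma$-translates of $F$ meeting a ball of radius $R+\operatorname{diam}(F)$, whose number is controlled by $\vol(B(R+\operatorname{diam}F))/\vol(F)$ and hence by $A e^{BR}$ with $B=d-1$. I would also cite Donnelly's paper directly for the Gaussian heat-kernel bound on $\bH^d$ (alternatively, on $\bH^d$ one has an explicit formula for $H_t^0$ from which the bound is immediate), so that this step needs no new argument. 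The remainder is the elementary summation sketched above.
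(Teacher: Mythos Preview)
Your proposal is correct and follows essentially the same route as the paper: both combine Donnelly's Gaussian upper bound for $H_t^0$, the uniform positive injectivity radius coming from cocompactness of $\Gamma$, and the lattice-point count $\#\{\gamma:\rho(\gamma x,x)\le R\}\le A e^{BR}$ obtained from exponential volume growth. The only difference is bookkeeping in the final summation --- you use dyadic shells $R_k=\varepsilon_0+k$, whereas the paper bounds $\sum_{\gamma\ne 1}e^{-\rho(\gamma x,x)^{3/2}}$ uniformly and then peels off a factor $e^{-\sqrt{c}/t}$ --- but this is a cosmetic variation on the same estimate.
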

\begin{proof}
For $x,y\in\widetilde{X}$ let $\rho(x,y)$ denote the geodesic distance of $x,y$.
Then using \eqref{KH} it follows from \cite[Theorem 3.3]{Donnelly} that there
exists a
constant $C_1$ such that
for every $g\in G$ and every $t\in\left(0,1\right]$ one has
\begin{align}\label{Don}
H_t^0(g)\leq C_1 t^{-\frac{d}{2}}\exp{\left(-\frac{\rho^2(gK,1K)}{4t}\right)}.
\end{align}
Let $x\in\widetilde{X}$ and let $B_R(x)$ be the metric ball around $x$ of radius
$R$. Then one has
\begin{align}\label{volgr}
\vol B_R(x)\leq C_2 e^{2nR}.
\end{align}
Since $\Gamma$ is cocompact and torsion-free, there exists an $\epsilon>0$ such
that
$B_\epsilon(x)\cap \gamma B_\epsilon(x)=\emptyset$ for every 
$\gamma\in\Gamma-\{1\}$ and every $x\in\widetilde{X}$. Thus for every 
$x\in \widetilde{X}$ the union over
all
$\gamma B_\epsilon (x)$,
where $\gamma\in\Gamma$ is such that $\rho(x,\gamma x)\leq R$ 
is disjoint and is contained in $B_{R+\epsilon}(x)$.
Using \eqref{volgr} it follows that there
exists a constant $C_3$ such that for every $x\in\widetilde{X}$ one has
\begin{align*}
\#\{\gamma\in\Gamma\colon \rho(x,\gamma x)\leq R\}\leq C_3 e^{2nR}. 
\end{align*}
Hence there exists a
constant $C_4>0$ such that for every $x\in \widetilde{X}$ one has
\begin{align}\label{sum}
\sum_{\substack{\gamma\in\Gamma\\ \gamma\neq
1}}e^{-\rho^{3/2}(\gamma x,x)}\leq C_4.
\end{align}
Now let
\begin{align*}
c:=\inf\{\rho(x,\gamma x)\colon \gamma\in\Gamma-\{1\},\:x\in\widetilde{X}\}.
\end{align*}
We have $c>0$ and using \eqref{Don} and \eqref{sum} we get constants $c_0$ and
$C_0$ such that for every $g\in G$ and $0< t\le 1$  we have
\begin{align*}
\sum_{\substack{\gamma\in\Gamma\\ \gamma\neq
1}}H_t^0(g^{-1}\gamma g)\leq
C_1t^{-\frac{d}{2}}e^{-\sqrt{c}/t}\sum_{\substack{\gamma\in\Gamma\\ \gamma\neq
1}}e^{-
\rho^{3/2}(\gamma gK,gK)}\leq C_0e^{-c_0/t}.
\end{align*}
\end{proof}

\section{The analytic torsion}\label{Torsion}
\setcounter{equation}{0}

Let $\tau$ be an irreducible finite-dimensional representation of $G$ on $
V_{\tau}$. Let $E'_{\tau}$ be the flat vector bundle over $X$ associated to the 
restriction of $\tau$ to $\Gamma$. Then $E'_{\tau}$ is canonically isomorphic 
to the locally homogeneous vector bundle $E_{\tau}$ associated to $\tau|_{K}$ 
(see \cite[Proposition 3.1]{Mats}. By \cite[Lemma 3.1]{Mats}, there exists an 
inner product $\left<\cdot,\cdot\right>$ on $V_{\tau}$ such that 
\begin{enumerate}
\item $\left<\tau(Y)u,v\right>=-\left<u,\tau(Y)v\right>$ for all 
$Y\in\mathfrak{k}$, $u,v\in V_{\tau}$
\item $\left<\tau(Y)u,v\right>=\left<u,\tau(Y)v\right>$ for all 
$Y\in\mathfrak{p}$, $u,v\in V_{\tau}$.
\end{enumerate}
Such an inner product is called admissible. It is unique up to scaling. Fix an 
admissible inner product. Since $\tau|_{K}$ is unitary with respect to this 
inner product, it induces a metric on $E_{\tau}$ which we also call 
admissible. Let $\Lambda^{p}(E_{\tau})=\Lambda^pT^*(X)\otimes E_\tau$. Let
\begin{align}\label{repr4}
\nu_{p}(\tau):=\Lambda^{p}\Ad^{*}\otimes\tau:\:K\rightarrow{\rm{GL}}
(\Lambda^{p}\mathfrak{p}^{*}\otimes V_{\tau}).
\end{align}
Then there is a canonical isomorphism
\begin{align}\label{pforms}
\Lambda^{p}(E_{\tau})\cong\Gamma\backslash(G\times_{\nu_{p}(\tau)}
\Lambda^{p}\mathfrak{p}^{*}\otimes V_{\tau}).
\end{align}
of locally homogeneous vector bundles. 
Let  $\Lambda^{p}(X,E_{\tau})$ be the space the smooth $E_{\tau}$-valued 
$p$-forms on $X$. The isomorphism \eqref{pforms} induces an isomorphism
\begin{align}\label{isoschnitte}
\Lambda^{p}(X,E_{\tau})\cong C^{\infty}(\Gamma\backslash G,\nu_{p}(\tau)),
\end{align}
where the latter space is defined as in \eqref{globsect1}. A corresponding 
isomorphism also holds for the spaces of $L^{2}$-sections. 
Let $\Delta_{p}(\tau)$ be the 
Hodge-Laplacian on $\Lambda^{p}(X,E_{\tau})$ with respect to the admissible 
metric in $E_\tau$. Let $R_\Gamma$ denote the right regular representation 
of $G$ in $L^2(\Gamma\bs G)$. By \cite[(6.9)]{Mats} it follows that with
respect to the isomorphism \eqref{isoschnitte} one has
\[
\Delta_{p}(\tau)f=-R_\Gamma(\Omega)f+\tau(\Omega)\Id f, \: f\in
C^{\infty}(\Gamma\backslash G,\nu_{p}(\tau)).
\]
Let
\begin{align}\label{heattor1}
K(t,\tau):=\sum_{p=1}^{d}(-1)^{p}p\Tr(e^{-t\Delta_{p}(\tau)}).
\end{align}
and
\begin{equation}\label{constanttau}
h(\tau):=\sum_{p=1}^{d}(-1)^p p \dim H^p(X,E_\tau).
\end{equation}
Then $K(t,\tau)-h(\tau)$ decays exponentially as $t\to\infty$ and it follows 
from  \eqref{anator1} that
\begin{align}\label{anator2}
\log{T_{X}(\tau)}=\frac{1}{2}\frac{d}{ds}\left(\frac{1}{\Gamma(s)}
\int_{0}^{\infty}t^{s-1}(K(t,\tau)-h(\tau))\;dt\right)\bigg|_{s=0},
\end{align}
where the right hand side is defined near $s=0$ by analytic continuation of 
the Mellin transform. Let
$\widetilde{E}_{\nu_p(\tau)}:=G\times_{\nu_p(\tau)}\Lambda^p\mathfrak{p}
^*\otimes V_{\tau}$ and let
$\widetilde{\Delta}_p(\tau)$ be the lift of $\Delta_p(\tau)$ to
$C^\infty(\widetilde{X},\widetilde{E}_{\nu_p(\tau)})$.
Then again it follows from \cite[(6.9)]{Mats} that on $C^\infty(G,\nu_p(\tau))$
one has
\begin{align}\label{kuga}
\tilde\Delta_p(\tau)=-R_\Gamma(\Omega)+\tau(\Omega)\Id.
\end{align}
Let $e^{-t\tilde\Delta_p(\tau)}$, 
be the corresponding heat semigroup on $L^2(G,\nu_p(\tau))$. This is
a smoothing operator which commutes with the action of $G$. Therefore, it is of
the form
\begin{displaymath}
 \left( e^{-t\tilde\Delta_p(\tau)}\phi\right)(g)=\int_G  
H^{\tau,p}_t(g^{-1}g')\phi(g') \;dg',\quad 
\phi\in(L^2(G,\nu_p(\tau)),\;\;g\in G,
\end{displaymath}
where the kernel
\begin{align}\label{DefHH}
H^{\tau,p}_t\colon G\to\End(\Lambda^p\mathfrak p^*\otimes
V_\tau)
\end{align} 
belongs to $C^\infty\cap L^2$ and  satisfies the covariance property
\begin{equation}\label{covar}
H^{\tau,p}_t(k^{-1}gk')=\nu_p(\tau)(k)^{-1} H^{\tau,p}_t(g)\nu_p(\tau)(k')
\end{equation}
with respect to the representation \eqref{repr4}.
Moreover, for all $q>0$ we have 
\begin{equation}\label{schwartz1}
H^{\tau,p}_t \in (\mathcal{C}^q(G)\otimes
\End(\Lambda^p\pL^*\otimes V_\tau))^{K\times K}, 
\end{equation}
where $\mathcal{C}^q(G)$ denotes Harish-Chandra's $L^q$-Schwartz space. 
The proof is similar to the proof of Proposition 2.4 in \cite{Barbasch}. 
Now we come to the heat kernel of $\Delta_p(\tau)$. First 
the integral kernel of $e^{-t\Delta_p(\tau)}$ on $L^2(\Gamma\backslash
G,\nu_p(\tau)$ is given 
by 
\begin{align}\label{kernelx}
H^{\tau,p}(t;x,x'):=\sum_{\gamma\in\Gamma}{H}^{\tau,p}_{t}(g^{-1}\gamma g'),
\end{align}
where $x,x'\in \Gamma\backslash G$, $x=\Gamma g $, $x'=\Gamma g'$. As in section
\ref{secBLO} this
series converges absolutely and locally uniformly. 
Therefore
the trace of
the heat operator $e^{-t\Delta_p(\tau)}$ is given by
\[
\Tr\left(e^{-t\Delta_p(\tau)}\right)=\int_X\tr H^{\tau,p}(t;x,x)\;dx,
\]
where $\tr$ denotes the trace $\tr\colon \End(V_\nu)\to \C$.
Let
\begin{align}\label{Defh}
{h}^{\tau,p}_{t}(g):=\tr{H}^{\tau,p}_{t}(g).
\end{align}
Using \eqref{kernelx} we obtain
\begin{equation}\label{TrDeltap}
\Tr\left(e^{-t\Delta_p(\tau)}\right)=\int_{\Gamma\bs G}\sum_{\gamma\in\Gamma} 
{h}^{\tau,p}_{t}(g^{-1}\gamma g)\,d\dot g.
\end{equation}
Put
\begin{equation}\label{alter}
  k_t^\tau=\sum_{p=1}^d(-1)^pp\, h^{\tau,p}_t.
\end{equation}
Then it follows that
\begin{equation}\label{anator3}
K(t,\tau)=\int_{\Gamma\bs G}\sum_{\gamma\in\Gamma} 
k_{t}^\tau(g^{-1}\gamma g)\,d\dot g.
\end{equation}
Let $R_\Gamma$ be the right regular representation of $G$ on $L^2(\Gamma\bs G)$.
Then \eqref{anator3} can be written as
\begin{equation}\label{trace8}
K(t,\tau)=\Tr R_\Gamma(k^\tau_t).
\end{equation}
One can now apply the Selberg trace formula to the right hand side. 
For this purpose we need to compute  the Fourier transform of $k_t^{\tau}$ which
we do next.

To begin with let $\pi$ be an admissible unitary 
representation of $G$ on a Hilbert space $\cH_\pi$. Set
\[
\tilde \pi(H^{\tau,p}_t)=\int_G \pi(g)\otimes H^{\tau,p}_t(g)\,dg.
\]
This defines a bounded operator on 
$\cH_\pi\otimes \Lambda^p\mathfrak p^*\otimes V_\tau$. As in 
\cite[pp. 160-161]{Barbasch} it follows from 
\eqref{covar} that relative to the splitting
\[
\cH_\pi\otimes \Lambda^p\mathfrak p^*\otimes V_\tau=
\left(\cH_\pi\otimes \Lambda^p\mathfrak p^*\otimes V_\tau\right)^K\oplus
\left[\left(\cH_\pi\otimes \Lambda^p\mathfrak p^*\otimes V_\tau\right)^K
\right]^\perp,
\]
$\tilde \pi(H^{\tau,p}_t)$ has the form
\[
\tilde \pi(H^{\tau,p}_t)=\begin{pmatrix}\pi(H^{\tau,p}_t)& 0\\ 0& 0
\end{pmatrix}
\]
with $\pi(H^{\tau,p}_t)$ acting on $\left(\cH_\pi\otimes 
\Lambda^p\mathfrak p^*\otimes V_\tau\right)^K$. Using \eqref{kuga} it follows 
as in \cite[Corollary 2.2]{Barbasch} that
\begin{equation}\label{equtrace2}
\pi(H^{\tau,p}_t)=e^{t(\pi(\Omega)-\tau(\Omega))}\Id
\end{equation}
on $\left(\cH_\pi\otimes \Lambda^p\mathfrak p^*\otimes V_\tau\right)^K$.
Let $\{\xi_n\}_{n\in\N}$ and $\{e_j\}_{j=1}^m$ be orthonormal bases of $\cH_\pi$
and  $\Lambda^p\mathfrak p^*\otimes V_\tau$, respectively. Then we have
\begin{equation}\label{equtrace1}
\begin{split}
\Tr\pi(H_t^{\tau,p})&=\sum_{n=1}^\infty\sum_{j=1}^m\langle\pi(H_t^{\tau,p})
(\xi_n\otimes e_j),(\xi_n\otimes e_j)\rangle\\
&=\sum_{n=1}^\infty\sum_{j=1}^m\int_G\langle\pi(g)\xi_n,\xi_n\rangle
\langle H_t^{\tau,p}(g)e_j,e_j\rangle\,dg\\
&=\sum_{n=1}^\infty\int_G h_t^{\tau,p}(g)\langle\pi(g)\xi_n,\xi_n\rangle\;dg\\
&=\Tr\pi(h_t^{\tau,p}).
\end{split}
\end{equation}
Let $\pi\in\hat G$ and let $\Theta_\pi$ denote its character.
Then it follows
from \eqref{alter}, \eqref{equtrace2} and \eqref{equtrace1} that
\begin{equation}\label{equtrace3}
\Theta_\pi(k_t^\tau)=e^{t(\pi(\Omega)-\tau(\Omega))}\sum_{p=1}^d (-1)^p p\cdot
\dim(\cH_\pi\otimes\Lambda^p\pL^*\otimes V_\tau)^K.
\end{equation}
Now we let $\pi$ be a unitary principal series representation 
$\pi_{\sigma,\lambda}$, $\lambda\in\R$, $(\sigma,W_\sigma)\in\hat M$. 
Let $c(\sigma)$ be defined by \eqref{csigma}. 
By Frobenius reciprocity \cite[p.208]{Knapp} and Corollary \ref{casimirhs}
we get
\begin{equation}\label{charakterps}
\Theta_{\sigma,\lambda}(k_t^\tau)=e^{-t(\lambda^2-c(\sigma)+\tau(\Omega))}
\sum_{p=1}^d (-1)^p p\cdot
\dim(W_\sigma\otimes\Lambda^p\pL^*\otimes V_\tau)^M.
\end{equation}
Now observe that  as $M$-modules,  $\pL$ and $\aL\oplus\nf$ are equivalent. 
Thus we get 
\begin{equation}\label{alter2}
\sum_{p=1}^{d}(-1)^p p\,\Lambda^p\mathfrak  p^*
=\sum_{p=1}^{d}(-1)^{p}p\left(\Lambda^p\mathfrak n^*
+\Lambda^{p-1}\mathfrak n^*\right)
=\sum_{p=0}^{d-1}(-1)^{p+1}\Lambda^p\mathfrak n^*.
\end{equation}
Therefore we have
\begin{equation}\label{charac1}
\Theta_{\sigma,\lambda}(k^\tau_t)=e^{-t(\lambda^2-c(\sigma)+\tau(\Omega))}
\sum_{p=0}^{d-1}(-1)^{p+1}
\dim(W_\sigma\otimes\Lambda^p\nf^*\otimes V_\tau)^M.
\end{equation}
We now distinguish two cases. First assume that $d=2n$ is even.
Then for the principal series
representations of $G$ we obtain the following lemma.
\begin{lem}\label{vanishing}
Let $d$ be even. Then $\Theta_{\sigma,\lambda}(k_t^\tau)=0$ for all 
$\sigma\in\hat M$ and $\lambda\in\R$.
\end{lem}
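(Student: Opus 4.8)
The plan is to compute the alternating sum $\sum_{p=0}^{d-1}(-1)^{p+1}\dim(W_\sigma\otimes\Lambda^p\nf^*\otimes V_\tau)^M$ appearing in \eqref{charac1} and show it vanishes when $d=2n$. Since $N$ is abelian of dimension $d-1=2n-1$, the Lie algebra $\nf$ is $(2n-1)$-dimensional, so the alternating sum runs over $p=0,\dots,2n-1$, an \emph{even} number of terms. First I would use that $\dim(W_\sigma\otimes\Lambda^p\nf^*\otimes V_\tau)^M=\dim\operatorname{Hom}_M(W_\sigma^*,\Lambda^p\nf^*\otimes V_\tau)$, or more symmetrically rewrite the whole sum as $-\Theta_\sigma$ evaluated on the virtual $M$-module $\bigoplus_p(-1)^p\Lambda^p\nf^*\otimes V_\tau$. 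By Corollary \ref{Kostant} (Kostant's theorem), this virtual $MA$-module equals $\bigoplus_{w\in W^1}(-1)^{l(w)}V_{\tau(w)}$, so the sum becomes $-\sum_{w\in W^1}(-1)^{l(w)}\dim(W_\sigma\otimes V_{\tau(w)}^*)^M$ (restricting the $MA$-modules $V_{\tau(w)}$ to $M$).

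The key point is then a pairing/involution argument on $W^1$. When $d=2n$, the group $G=\Spin(2n,1)$ has $\operatorname{rk}(G)=n<\operatorname{rk}(M)+1$; more importantly, the relevant structural fact is that the restricted Weyl group $W(A)$ is nontrivial and its action on $\hat M$ (recorded in \eqref{wsigma}) must be used. For even $d$ one has $-\operatorname{Id}\in W_G$, or equivalently the "opposition" involution pairs $w\in W^1$ with another element $w'\in W^1$ of complementary length $l(w')=(2n-1)-l(w)$, which has opposite parity, and such that the $M$-types of $V_{\tau(w)}$ and $V_{\tau(w')}$ are related by $w_0$. Since $l(w)$ and $(2n-1)-l(w)$ have opposite parities, the terms $(-1)^{l(w)}$ and $(-1)^{l(w')}$ cancel, provided $\dim(W_\sigma\otimes V_{\tau(w)}^*)^M=\dim(W_\sigma\otimes V_{\tau(w')}^*)^M$. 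The latter would follow if the $M$-restriction of $V_{\tau(w')}$ is isomorphic to that of $V_{\tau(w)}$, or is the $w_0$-twist of it — and in the even-dimensional case the $w_0$-twist acts trivially on $\hat M$ in the relevant sense because of how the Cartan involution interacts with $\overline\nf$ in even dimensions, so the two multiplicities agree.

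Concretely, I would proceed as follows. First, set up the decomposition $\Theta_{\sigma,\lambda}(k_t^\tau)=-e^{-t(\lambda^2-c(\sigma)+\tau(\Omega))}\sum_{w\in W^1}(-1)^{l(w)}[\,V_{\tau(w)}|_M : \sigma^*\,]$, using Corollary \ref{Kostant} and Frobenius reciprocity. Second, observe that for $d=2n$ the set $W^1$ carries a fixed-point-free involution $w\mapsto w^\ast$ with $l(w^\ast)=\dim\overline\nf-l(w)=(2n-1)-l(w)$; this is the standard fact (valid because $\dim\nf$ is odd when $d$ is even) that the top cohomology $H^{2n-1}(\overline\nf,V_\tau)$ is nonzero and the Euler characteristic is forced by Poincaré duality on $\nf$-cohomology. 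Third, identify the $MA$-module $V_{\tau(w^\ast)}$ with (the $w_0$-twist of) $V_{\tau(w)}$, deduce equality of the multiplicities $[\,V_{\tau(w)}|_M:\sigma^*\,]=[\,V_{\tau(w^\ast)}|_M:\sigma^*\,]$, and conclude the alternating sum telescopes to zero. I expect the main obstacle to be step three: pinning down exactly how $\theta$ (equivalently the "lowest weight vs. highest weight" flip in even rank) identifies $\sigma_{\tau,w}$ with $w_0\sigma_{\tau,w^\ast}$, and verifying that $[\sigma:\tau']=[w_0\sigma:\tau']$ is irrelevant here because we only need equality of \emph{dimensions} of $M$-invariants, which is symmetric under $w_0$. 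An alternative cleaner route, which I would fall back on if the involution bookkeeping gets unwieldy, is to note that the full sum $\sum_{p=0}^{d-1}(-1)^{p+1}\Lambda^p\nf^*\otimes V_\tau$ as a virtual $M$-module (forgetting $A$) is $(-1)\cdot e(\nf)\cdot[V_\tau|_M]$ where $e(\nf)=\sum(-1)^p\Lambda^p\nf^*$ is the "Euler class" virtual module of the odd-dimensional space $\nf$, and for an odd-dimensional representation space the element $-\operatorname{Id}$ (or any orientation-reversing element of the relevant Weyl group acting on $\nf$) shows $e(\nf)=0$ in $R(M)$ — in the even-dimensional hyperbolic case the nontrivial element $w_0\in W(A)$ acts on $\nf\cong\overline\nf^*$ precisely by such an orientation reversal, forcing $e(\nf)=-e(\nf)$, hence $e(\nf)=0$ and the whole character vanishes identically in $\sigma$ and $\lambda$.
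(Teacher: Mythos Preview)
Your fallback route lands on the correct mechanism, but both your main argument and the justification you give for the fallback are more fragile than necessary, and the paper's proof is a one-line observation you have almost written down yourself.

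The paper simply notes that as $M$-modules $\Lambda^p\nf^*\cong\Lambda^{d-1-p}\nf^*$ (Hodge duality: $M=\Spin(d-1)$ acts orthogonally on the $(d-1)$-dimensional space $\nf$ and, being connected, preserves orientation, so the Hodge star is $M$-equivariant). When $d$ is even, $d-1$ is odd, so the pairing $p\leftrightarrow d-1-p$ matches terms of opposite parity and $\sum_{p=0}^{d-1}(-1)^p\Lambda^p\nf^*=0$ in $R(M)$. Plugging this into \eqref{charac1} kills $\Theta_{\sigma,\lambda}(k_t^\tau)$ for every $\sigma$ and $\lambda$ at once. No Kostant, no $W^1$, no case analysis on $\sigma$.

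Your primary route via Corollary~\ref{Kostant} and an involution on $W^1$ is problematic on two counts. First, in this paper Corollary~\ref{Kostant} (and the surrounding apparatus $\sigma_{\tau,k}$, $\lambda_{\tau,k}$, the decomposition \eqref{lambdadecom}) is set up only after the standing assumption $d=2n+1$; you would have to redo the Kostant bookkeeping for $\Spin(2n,1)$ from scratch. Second, and more substantively, your step~3 is exactly where the difficulty lies: you need that the $M$-restrictions of $V_{\tau(w)}$ and $V_{\tau(w^\ast)}$ have the same multiplicity of $\sigma^*$, and you have not established this. Invoking ``$w_0$ acts trivially in the relevant sense'' is not a proof.

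As for the fallback, your conclusion $e(\nf)=\sum_p(-1)^p\Lambda^p\nf^*=0$ in $R(M)$ is precisely the paper's statement, but your justification via $w_0$ is shaky: the nontrivial element of $W(A)$ does not act on $\nf$, it conjugates $\nf$ to $\overline{\nf}$, so ``$w_0$ reverses orientation on $\nf$'' is not literally correct. The clean reason $e(\nf)=0$ is the $M$-equivariant Hodge star on the odd-dimensional orthogonal $M$-module $\nf$, which is what the paper uses.
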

\begin{proof}
As $M$-modules we have $\Lambda^p\nf^*\cong \Lambda^{d-1-p}\nf^*$. Hence, if
$d$ is even, we get
\[
\sum_{p=0}^{d-1}(-1)^p\Lambda^p\nf^*=0.
\]
Combined with \eqref{charac1} the lemma follows.
\end{proof}

Next we assume that $d=2n+1$ is odd. Then we have the following proposition.
\begin{prop}\label{chartau}
Assume that $d=2n+1$. Then one has
\begin{equation*}
\Theta_{\sigma,\lambda}(k_t^\tau):=\begin{cases}e^{-t(\lambda^2+\lambda_{\tau,k}
^2)}, & \sigma\in\{\sigma_{\tau,k},\:w_0\sigma_{\tau,k}\}\\
0, & \text{otherwise}.
\end{cases}
\end{equation*}
Here the $\lambda_{\tau,k}\in\R$ and the $\sigma_{\tau,k}\in\hat{M}$ are as in
\eqref{lambdatau} and \eqref{sigmatau}
.
\end{prop}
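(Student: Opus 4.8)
The strategy is to evaluate the alternating sum of dimensions appearing in \eqref{charac1} using Kostant's theorem, in the form of Corollary \ref{Kostant}. Start from
\[
\Theta_{\sigma,\lambda}(k^\tau_t)=e^{-t(\lambda^2-c(\sigma)+\tau(\Omega))}
\sum_{p=0}^{d-1}(-1)^{p+1}
\dim(W_\sigma\otimes\Lambda^p\nf^*\otimes V_\tau)^M.
\]
The key observation is that taking $M$-invariants is the same as computing $\mathrm{Hom}_M(W_{\sigma^*},\,-\,)$ for the trivial-twist, and that the functor $V\mapsto \dim V^M$ extended linearly to the representation ring $R(M)$ detects, for each virtual $M$-module, the multiplicity of the trivial representation. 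Hence, writing things in $R(M)$ and using Corollary \ref{Kostant},
\[
\sum_{p=0}^{d-1}(-1)^{p}\bigl[\Lambda^p\nf^*\otimes V_\tau\bigr]
=-\sum_{w\in W^1}(-1)^{l(w)}\bigl[V_{\sigma_{\tau,w}}\bigr]\otimes(\text{$A$-character}),
\]
where I must be careful to track the $A$-action as well: Corollary \ref{Kostant} is an identity of $MA$-modules, and the $A$-weights are exactly the $\lambda_{\tau,w}$ of \eqref{lambdatauw}. Tensoring with $W_\sigma$ and taking $M$-invariants therefore collapses the sum to those $w\in W^1$ for which $\sigma_{\tau,w}\cong\sigma^*$ (equivalently, by unitarity and self-duality considerations for $M=\mathrm{Spin}(2n)$, for which $\sigma_{\tau,w}$ pairs nontrivially with $\sigma$).

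First I would make precise the bookkeeping that $\dim(W_\sigma\otimes V)^M=[\,\bar\sigma:V\,]$ equals the multiplicity of $\bar\sigma$ in $V$, so that the alternating sum picks out $\sum_{w:\ \sigma_{\tau,w}=\bar\sigma}(-1)^{l(w)+1}$. Then I would invoke \eqref{lambdadecom}, which lists the set $\{(\lambda_{\tau,w},\sigma_{\tau,w},l(w))\}$ explicitly as the $2(n+1)$ pairs $(\lambda_{\tau,k},\sigma_{\tau,k},k)$ and $(-\lambda_{\tau,k},w_0\sigma_{\tau,k},2n-k)$ for $k=0,\dots,n$. Crucially, the $\sigma_{\tau,k}$ for $k=0,\dots,n$ are pairwise distinct (their highest weights \eqref{sigmatau} differ), and likewise the $w_0\sigma_{\tau,k}$ are pairwise distinct; moreover $\sigma_{\tau,k}\neq w_0\sigma_{\tau,j}$ for all $k,j$ because $\tau(m)\neq\tau(m)_\theta$-type genericity forces $k_{n+1}\neq 0$ in the relevant weights — this is the place where the hypothesis on $\tau$ (that it is not $\theta$-invariant, guaranteed for the ray $\tau(m)$) enters. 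Consequently, for $\sigma=\sigma_{\tau,k}$ exactly one $w\in W^1$ contributes, with sign $(-1)^{k+1}$ and $A$-weight $\lambda_{\tau,k}$; and for $\sigma=w_0\sigma_{\tau,k}$ exactly one $w$ contributes, with sign $(-1)^{2n-k+1}=(-1)^{k+1}$ and $A$-weight $-\lambda_{\tau,k}$. In the exponent $\lambda^2-c(\sigma)+\tau(\Omega)$ I would use Proposition \ref{Aussage uber Casimirew}, which gives $\tau(\Omega)=\lambda_{\tau,w}^2+c(\sigma_{\tau,w})$, hence $-c(\sigma)+\tau(\Omega)=\lambda_{\tau,k}^2$ in both cases, so the exponential becomes $e^{-t(\lambda^2+\lambda_{\tau,k}^2)}$.

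Assembling: if $\sigma\notin\{\sigma_{\tau,k},w_0\sigma_{\tau,k}:k=0,\dots,n\}$ the sum over $W^1$ is empty and $\Theta_{\sigma,\lambda}(k_t^\tau)=0$; otherwise it is $(-1)^{k+1}\cdot(-1)\cdot e^{-t(\lambda^2+\lambda_{\tau,k}^2)}\cdot(\text{something})$ — and here I must check the total sign and the dimension factor really do come out to $+1$ times the exponential. The factor of $p$-weighting has already been converted to the plain alternating sum $\sum(-1)^{p+1}$ in \eqref{alter2}, and Corollary \ref{Kostant} contributes one copy of each $V_{\sigma_{\tau,w}}$ with $A$-character a single exponential $e^{\lambda_{\tau,w}(\cdot)}$ of multiplicity one, so the coefficient is $(-1)^{l(w)+1}$ times $1$; combined with the overall sign $(-1)^{p+1}$ bookkeeping this yields $(-1)^{l(w)}\cdot(-1)^{?}$, which I would verify equals $1$ by a direct check in a low-rank case and then in general. \textbf{The main obstacle} I anticipate is precisely this sign-and-multiplicity reconciliation: one has to combine the $(-1)^{p+1}$ from \eqref{charac1}, the $(-1)^{l(w)}$ from Corollary \ref{Kostant}, the fact that $\dim V_{\sigma_{\tau,w}}^M$-type multiplicities are each exactly $1$ on the nose (using that the $\sigma_{\tau,k}$ are distinct and $W(A)$-related pairs are distinct), and the Casimir identity of Proposition \ref{Aussage uber Casimirew}, all without an off-by-one error — the rest (vanishing for other $\sigma$, the shape $e^{-t(\lambda^2+\lambda_{\tau,k}^2)}$) is then immediate.
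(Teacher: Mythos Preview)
Your approach is essentially the paper's: start from \eqref{charac1}, apply Corollary~\ref{Kostant} to replace the alternating sum over $p$ by the sum $\sum_{w\in W^1}(-1)^{l(w)+1}\dim(W_\sigma\otimes V_{\sigma_{\tau,w}})^M$, then use \eqref{lambdadecom} and Proposition~\ref{Aussage uber Casimirew} to read off the answer.

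One point where you diverge from the paper deserves comment. You locate the key multiplicity step in the claim that the $\sigma_{\tau,k}$ and $w_0\sigma_{\tau,j}$ are pairwise distinct, invoking $\tau\neq\tau_\theta$. The paper does not argue this way and does not need that hypothesis here: instead it uses that $\dim(W_\sigma\otimes V_{\sigma_{\tau,w}})^M=1$ iff $\sigma_{\tau,w}=\check\sigma$, together with the explicit fact (from \cite[3.2.5]{Goodman}) that for $M=\Spin(2n)$ one has $\check\sigma=\sigma$ when $n$ is even and $\check\sigma=w_0\sigma$ when $n$ is odd, plus $c(\sigma)=c(w_0\sigma)$. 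This handles both cases uniformly and works for arbitrary $\tau$; your distinctness argument is not needed for the proposition itself (it is only used later, in deriving \eqref{identcontr4}). As for the sign you flag as the ``main obstacle'': your bookkeeping is right that the surviving term carries $(-1)^{l(w)+1}=(-1)^{k+1}$, and indeed that sign reappears exactly where the proposition is applied in \eqref{identcontr4}.
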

\begin{proof}
Let $\sigma\in\hat{M}$. For $w\in W^1$ let $\sigma_{\tau,w}$ be as in
\eqref{sigmatauw} and
let $V_{\sigma_{\tau,w}}$ be its representation space. 
Then by \eqref{charac1} and Corollary \ref{Kostant} we have
\begin{align*}
\Theta_{\sigma,\lambda}(k_t^\tau)=e^{-t(\lambda^2-c(\sigma)+\tau(\Omega))}\sum_{
w\in
W^1}(-1)^{l(w)+1}\dim(W_\sigma\otimes V_{\sigma_{\tau,w}})^{M}.
\end{align*}
Now observe that $\dim(W_\sigma\otimes V_{\sigma_{\tau,w}})^{M}=1$ if
$\check{\sigma}=\sigma_{\tau,w}$ and 
that $\dim(W_\sigma\otimes V_{\sigma_{\tau,w}})^{M}=0$ otherwise. Here
$\check{\sigma}$ 
denotes the contragredient representation of $\sigma$. By
\cite[section 3.2.5]{Goodman} if $n$ is
even we have $\check{\sigma}=\sigma$ 
and if $n$ is odd we have
$\check{\sigma}=w_0\sigma$ for every $\sigma\in\hat{M}$. Moreover by
\eqref{wsigma} and \eqref{csigma} 
we have $c(\sigma)=c(w_0\sigma)$ for every $\sigma\in\hat{M}$. Thus the
Proposition follows 
from \eqref{lambdadecom} and proposition \ref{Aussage uber Casimirew}.
\end{proof}

\section{Proof of the main results}\label{Beweis}
\setcounter{equation}{0}
First assume that $d=2n$. Let $\tau$ be any finite-dimensional irreducible
representation of $G$. Let $K(t,\tau)$ be defined by \eqref{heattor1}.
We use \eqref{trace8} and apply the Selberg trace formula to 
$\Tr R_\Gamma(k_t^\tau)$. 
Since $k_t^\tau$ is $K$-finite and belongs to $\mathcal{C}(G)$,
one easily sees that Theorem 6.7 in \cite{Wallach} applies also to $k_t^\tau$. 
Thus using Lemma \ref{vanishing} we get
\begin{equation}\label{anator4}
K(t,\tau)=\vol(X)k_t^\tau(1).
\end{equation}
Now we apply the Plancherel
theorem to express $k_t^\tau(1)$ in terms of characters. It follows from the 
definition of $k_t^\tau$ by \eqref{alter} and \eqref{schwartz1} that 
$k_t^\tau\in \mathcal{C}^q(G)$ for all $q>0$. Moreover, by \eqref{covar}, 
$k_t^\tau$ is left and right $K$-finite. Therefore $k_t^\tau$ is a $K$-finite
function in $\mathcal{C}(G)$. For such functions Harish-Chandra's 
Plancherel theorem holds \cite[Theorem 3]{HC}. For groups of real rank one which
have a compact Cartan subgroup it is
stated in \cite[Theorem 13.5]{Knapp}. By Lemma \ref{vanishing} the characters
of the principal series vanish on $k_t^\tau$. Therefore we have
\begin{equation}\label{planch5}
k_t^\tau(1)=\sum_{\pi\in\hat G_d}a(\pi)\Theta_\pi(k_t^\tau),
\end{equation}
where $\hat G_d$ denotes the discrete series and $a(\pi)\in\C$. Recall that
for a given $\nu\in \hat K$, there are only finitely many discrete series
representations $\pi$ with $[\pi|_K\colon\nu]\neq 0$. Since $k_t^\tau$ is
$K$-finite, the sum on the right hand side of \eqref{planch5} is finite. 
Now let $\pi\in\hat G_d$. Then $\Theta_\pi(k_t^\tau)$ is given by 
\eqref{equtrace3}. To apply this formula, we note that as $K$-modules we have
\[
\Lambda^p\pL^*\cong \Lambda^{d-p}\pL^*,\quad p=0,...,d.
\]
Since $d$ is even we get

\begin{equation}
\begin{split}
\sum_{p=0}^d(-1)^p p\Lambda^p\pL^*&
=-\sum_{p=0}^d(-1)^{d-p} (n-p)\Lambda^{d-p}\pL^*+n\sum_{p=0}^d(-1)^{d-p} 
\Lambda^{d-p}\pL^*\\
&=-\sum_{p=0}^d(-1)^{p} p\Lambda^{p}\pL^*+n\sum_{p=0}^d(-1)^{p} 
\Lambda^{p}\pL^*.
\end{split}
\end{equation}
Hence we have
\begin{equation}
\sum_{p=0}^d(-1)^p p\Lambda^p\pL^*=\frac{n}{2}\sum_{p=0}^d(-1)^{p}
\Lambda^{p}\pL^*.
\end{equation}
Using \eqref{equtrace3}, we get
\begin{equation}
\Theta_\pi(k_t^\tau)=\frac{n}{2}e^{t(\pi(\Omega)-\tau(\Omega))}\sum_{p=0}
^d(-1)^p 
\dim(\cH_\pi\otimes\Lambda^p\pL^*\otimes V_\tau)^K.
\end{equation}
Let $\cH_{\pi,K}$ be the subspace of $\cH_\pi$ consisting of all smooth 
$K$-finite vectors. Then
\[
(\cH_{\pi,K}\otimes\Lambda^p\pL^*\otimes V_\tau)^K=
(\cH_{\pi}\otimes\Lambda^p\pL^*\otimes V_\tau)^K. 
\]
So the $(\gL,K)$-cohomology $H^*(\gL,K;\cH_{\pi,K}\otimes V_\tau)$ is computed 
from the Lie algebra cohomology complex $([\cH_\pi\otimes\Lambda^*\pL^*\otimes
V_\tau]^K,d)$ (see \cite{Borel}). Thus by the Poincar\'e principle 
 we get
\begin{equation}\label{characterps3}
\Theta_\pi(k_t^\tau)=\frac{n}{2}e^{t(\pi(\Omega)-\tau(\Omega))}\sum_{p=0}
^d(-1)^p 
\dim H^p(\gL,K;\cH_{\pi,K}\otimes V_\tau).
\end{equation}
By \cite[II.3.1, I.5.3]{Borel} we have 
\[
H^p(\gL,K;\cH_{\pi,K}\otimes V_\tau)=\begin{cases}
[\cH_\pi\otimes\Lambda^p\pL^*\otimes V_\tau]^K,& \pi(\Omega)=\tau(\Omega);\\
0,& \pi(\Omega)\neq \tau(\Omega).
\end{cases}
\]
Together with \eqref{characterps3} this
implies that $\Theta_\pi(k_t^\tau)$ is independent of $t>0$. Hence by
\eqref{planch5} and \eqref{anator4} it follows that $K(t,\tau)$ is independent 
of $t>0$. Let $h(\tau)$ be the constant defined by \eqref{constanttau}. Since
$\ker\Delta_p(\tau)\cong H^p(X,E_\tau)$, $p=0,...,d$,  it follows that 
$\lim_{t\to\infty} K(t,\tau)=h(\tau)$. Thus we get
\[
K(t,\tau)-h(\tau)=0.
\]
Together with \eqref{anator2} this implies $T_X(\tau)=1$, which proves 
Lemma \ref{tortrivial}. 
\bigskip
\hfill$\boxempty$
 
Now assume that $d=2n+1$.
We fix natural numbers $\tau_{1},\dots,\tau_{n+1}$ with $\tau_{1}\geq\tau_{2}
\geq\dots\geq\tau_{n+1}$. Given $m\in\mathbb{N}$ let
$\tau(m)$ be the representation of $G$ with highest weight 
$(m+\tau_{1})e_{1}+\dots+(m+\tau_{n+1})e_{n+1}$. Then by \eqref{Tau theta} one 
has $\tau(m)\neq\tau(m)_{\theta}$ for all $m$. Hence by \cite[Theorem
6.7]{Borel}
we have $H^p(X,E_\tau(m))=0$ for all $p\in\{0,\dots,d\}$.
Using \eqref{anator2} we obtain
\begin{equation}\label{anator5}
\log T_X(\tau(m))=\frac{1}{2}\frac{d}{ds}\left(\frac{1}{\Gamma(s)}
\int_0^\infty t^{s-1}K(t,\tau(m))\,dt\right)\bigg|_{s=0}.
\end{equation}
Our goal is now to study the asymptotic behavior of $\log T_X(\tau(m))$ as
$m\to\infty$. First we prove an auxiliary result about the spectrum of 
$\widetilde{\Delta}_p(\tau(m))$. To this end for every $p\in\{0,\dots,d\}$ and
every
$m$ we define an endomorphism  $E_p(\tau(m))$ on $\Lambda^p\mathfrak{p}^*\otimes
V_{\tau(m)}$ by
\begin{align*}
E_p(\tau(m)):=\tau(m)(\Omega)\Id-\nu_p(\tau(m))(\Omega_K).
\end{align*}
Let $\widetilde{\Delta}_{\nu_p(\tau(m))}$ be the Bochner-Laplace operator on
$C^{\infty}(\widetilde{X},\widetilde{E}_{\nu_p(\tau(m))})$.
Then \eqref{BLO} and \eqref{kuga} imply that on $C^{\infty}(G,\nu_p(\tau(m)))$
one has
\begin{align}\label{BochHodg}
\widetilde{\Delta}_p(\tau(m))=\widetilde{\Delta}_{\nu_p(\tau(m))}+E_p(\tau(m)). 
\end{align}
The decomposition of $\nu_p(\tau(m))$ into its irreducible components induces 
a natural decomposition of  $C^\infty(G,\nu_p(\tau(m)))$ and of
$L^2(G,\nu_p(\tau(m)))$
With respect to this decomposition equation \eqref{BochHodg} can be rewritten as
\begin{align}\label{BochHodgg}
\widetilde{\Delta}_p(\tau(m))=\bigoplus_{\substack{\nu\in\hat{K}\\
\left[\nu_p(\tau(m)):\nu\right]\neq 0}}
\widetilde{\Delta}_\nu+\left(\tau(m)(\Omega)-\nu(\Omega_K)\right)\Id,
\end{align}
where the sum is a direct sum of unbounded operators. 
We first study $E_p(\tau(m))$. 
\begin{lem}\label{lowbound}
For $p=0,...,d$ and $m\in\N$ let 
\[
C_p(m):=\inf\{\tau(m)(\Omega)-\nu(\Omega_K)\colon \nu\in\hat{K}\colon
[\nu_p(\tau(m))\colon \nu]\neq 0\}.
\]
Then we have
\[
C_p(m)=m^2+O(m)
\]
as $m\to\infty$. 
\end{lem}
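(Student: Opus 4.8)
The plan is to make the direct sum \eqref{BochHodgg} explicit: for each fixed $p$ one determines all $K$-types $\nu$ occurring in $\nu_p(\tau(m))=\Lambda^p\pL^*\otimes\tau(m)|_K$ (cf.\ \eqref{repr4}) and evaluates $\tau(m)(\Omega)-\nu(\Omega_K)$ on each of them by means of \eqref{tauomega} and \eqref{nuomega}. From \eqref{tauomega} and $k_j(\tau(m))=\tau_j+m$ one gets at once
\[
\tau(m)(\Omega)=\sum_{j=1}^{n+1}(m+\tau_j+\rho_j)^2-\sum_{j=1}^{n+1}\rho_j^2=(n+1)m^2+O(m),\qquad m\to\infty,
\]
so it remains to show that $\nu(\Omega_K)=nm^2+O(m)$, uniformly, for every $\nu$ with $[\nu_p(\tau(m)):\nu]\neq 0$.

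By Proposition \ref{Einschr} the constituents $\nu'$ of $\tau(m)|_K$ occur with multiplicity one and are parametrised by highest weights with $k_j(\nu')\in[\tau_j+m,\,\tau_{j-1}+m]$ for $j=2,\dots,n+1$; in particular their number is bounded independently of $m$, and $k_j(\nu')=m+O(1)$ for all $j$, whence by \eqref{nuomega} one has $\nu'(\Omega_K)=\sum_{j=2}^{n+1}(k_j(\nu')+\rho_j+\tfrac12)^2-\sum_{j=2}^{n+1}(\rho_j+\tfrac12)^2=nm^2+O(m)$. Now $\Lambda^p\pL^*$ is the $m$-independent $p$-th exterior power of the isotropy representation of $K$ on $\pL^*$, i.e.\ of the $(2n+1)$-dimensional vector representation of $K=\Spin(2n+1)$; it is irreducible, and each of its weights has all coordinates in $\{-1,0,1\}$, hence bounded norm. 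Since $\nu_p(\tau(m))=\bigoplus_{\nu'}\Lambda^p\pL^*\otimes V_{\nu'}$, every relevant $\nu$ occurs in some $\Lambda^p\pL^*\otimes V_{\nu'}$, and the classical formula for tensor product multiplicities (Steinberg's formula) produces an element $w$ of the Weyl group of $\kL$ and a weight $\mu$ of $\Lambda^p\pL^*$ with
\[
\sum_{j=2}^{n+1}\Big(k_j(\nu)+\rho_j+\tfrac12\Big)e_j=w\!\left(\sum_{j=2}^{n+1}\big(k_j(\nu')+\rho_j+\tfrac12\big)e_j\right)+\mu .
\]
As that Weyl group acts on $\bL_\C^*$ by signed permutations of the $e_j$, it preserves the relevant quadratic form; combined with $\|\mu\|=O(1)$ and $\big\|\sum_{j}(k_j(\nu')+\rho_j+\tfrac12)e_j\big\|=O(m)$ this yields $\sum_{j=2}^{n+1}(k_j(\nu)+\rho_j+\tfrac12)^2=\sum_{j=2}^{n+1}(k_j(\nu')+\rho_j+\tfrac12)^2+O(m)$, and therefore, again by \eqref{nuomega}, $\nu(\Omega_K)=nm^2+O(m)$, with an implied constant depending only on $n$ and $\tau$ (there are finitely many $p$, boundedly many $\nu'$, and boundedly many $\nu$).

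Putting the two halves together gives $\tau(m)(\Omega)-\nu(\Omega_K)=(n+1)m^2-nm^2+O(m)=m^2+O(m)$ uniformly over all $\nu$ occurring in $\nu_p(\tau(m))$, and taking the infimum over this finite set proves $C_p(m)=m^2+O(m)$. The main point to watch is that the $O(m)$ error is genuinely uniform in the occurring $K$-types: this is what forces one to check that the constituents of $\tau(m)|_K$ all sit in a fixed-size box around $(m,\dots,m)$ and that tensoring with the fixed factor $\Lambda^p\pL^*$ shifts a Weyl translate of each highest weight only by a vector of bounded length, so that all quadratic-in-$m$ contributions cancel except the claimed $m^2$.
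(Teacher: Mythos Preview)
Your argument is correct and follows essentially the same route as the paper's proof: use Proposition~\ref{Einschr} to pin the highest weights of the constituents $\nu'$ of $\tau(m)|_K$ into an $O(1)$-box around $(m,\dots,m)$, then use tensor-product combinatorics to see that tensoring with the fixed factor $\Lambda^p\pL^*$ only shifts highest weights by $O(1)$, and finally plug into \eqref{tauomega} and \eqref{nuomega}. The one place where you work harder than necessary is the appeal to Steinberg's formula with a Weyl group element: the paper simply invokes \cite[Proposition~9.72]{Knapp2}, which gives directly that any $\nu$ occurring in $\Lambda^p\pL^*\otimes V_{\nu'}$ has $\Lambda(\nu)=\Lambda(\nu')+\mu$ for some weight $\mu$ of $\Lambda^p\pL^*$ (so your $w$ may be taken to be the identity), yielding $\tau_{j-1}+m+1\ge k_j(\nu)\ge \tau_j+m-1$ without passing through Weyl-invariance of the quadratic form.
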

\begin{proof}
Let $\nu_p:=\Lambda^p\Ad^*\colon K\to \GL(\Lambda^p\pL^*)$, $p=0,...,d$. 
Recall that $\nu_p(\tau(m))=\tau(m)|_{K}\otimes\nu_p$. 
Let $\nu\in\hat{K}$ with
$\left[\nu_p(\tau(m)):\nu\right]\neq 0$. Then by \cite[Proposition
9.72]{Knapp2},
there exists a $\nu^\prime\in\hat{K}$ with
$\left[\tau(m):\nu^\prime\right]\neq 0$ of 
highest weight $\Lambda(\nu^\prime)\in\mathfrak{b}_{\C}^*$ and a
$\mu\in\mathfrak{b}_{\C}^*$
which is a weight of $\nu_p$ such that the heighest weight $\Lambda(\nu)$ of
$\nu$ is given
by $\mu+\Lambda(\nu^\prime)$. 
Now let $\nu^\prime\in\hat{K}$ be such that
$\left[\tau(m):\nu^\prime\right]\neq 0$. Let
$\Lambda(\nu^\prime)$ be the highest weight of $\nu^\prime$ as in
\eqref{Darstellungen von K}. 
Then by Proposition \ref{Einschr} for every $j=2,\dots,n+1$ we have
$\tau_{j-1}+m\geq k_j(\nu^\prime)\geq
\tau_{j}+m$. Moreover, every weight $\mu\in\mathfrak{b}_{\C}^*$ of $\nu_p$ is
given as
\begin{align*}
\mu=\pm e_{j_{1}}\pm \dots\pm e_{j_{p}},\quad 2\leq j_{1}<\dots<j_{p}\leq n+1.
\end{align*}
Thus, if $\nu\in\hat{K}$ is such that $\left[\nu_p(\tau(m)):\nu\right]\neq 0$,
the highest weight $\Lambda(\nu)$ of $\nu$ given as in \eqref{Darstellungen von
K}
satisfies
\begin{align*}
\tau_{j-1}+m+1\geq k_j(\nu)\geq \tau_j+m-1,\quad j\in\{2,\dots,n+1\}.
\end{align*}
The lemma follows from equation \eqref{tauomega} and equation
\eqref{nuomega}. 
\end{proof}
\begin{corollary}\label{lowbound1}
There exists $m_0\in\N$ such that for all $p=0,\dots,d$ and $m\ge m_0$ we
have
\[
\Delta_p(\tau(m))\ge \frac{m^2}{2}.
\]
\end{corollary}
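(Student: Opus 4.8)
The plan is to reduce the statement to Lemma \ref{lowbound} via the Weitzenb\"ock-type identity for the Hodge Laplacian and the non-negativity of Bochner Laplace operators. First I would record the analogue of \eqref{BochHodgg} on the compact quotient: since $\Delta_p(\tau(m))$ acts on $C^\infty(\Gamma\bs G,\nu_p(\tau(m)))$ as $-R_\Gamma(\Omega)+\tau(m)(\Omega)\Id$, while $\Delta_\nu$ acts on $C^\infty(\Gamma\bs G,\nu)$ as $-R_\Gamma(\Omega)+\nu(\Omega_K)$, decomposing $\nu_p(\tau(m))$ into its irreducible $K$-constituents $\nu$ gives an orthogonal decomposition
\[
\Delta_p(\tau(m))=\bigoplus_{\substack{\nu\in\hat K\\ [\nu_p(\tau(m)):\nu]\neq 0}}\Delta_\nu+\bigl(\tau(m)(\Omega)-\nu(\Omega_K)\bigr)\Id
\]
of the self-adjoint operator $\Delta_p(\tau(m))$ on $L^2(X,\Lambda^p(E_{\tau(m)}))$.

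Next I would invoke that each $\Delta_\nu={\nabla^\nu}^*\nabla^\nu$ is a non-negative self-adjoint operator on the compact manifold $X$, so $\Delta_\nu\ge 0$. Feeding this into the displayed decomposition yields
\[
\Delta_p(\tau(m))\ \ge\ \inf\{\tau(m)(\Omega)-\nu(\Omega_K)\colon \nu\in\hat K,\ [\nu_p(\tau(m)):\nu]\neq 0\}\cdot\Id\ =\ C_p(m)\cdot\Id,
\]
with $C_p(m)$ the quantity of Lemma \ref{lowbound}.

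Finally, by Lemma \ref{lowbound} we have $C_p(m)=m^2+O(m)$ as $m\to\infty$ for each fixed $p$; since there are only finitely many values $p=0,\dots,d$, the error term is uniform in $p$, so there is an $m_0\in\N$ with $C_p(m)\ge m^2/2$ for all $m\ge m_0$ and all $p$. Combining with the previous inequality gives $\Delta_p(\tau(m))\ge \tfrac{m^2}{2}$ for $m\ge m_0$, as claimed.

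I do not expect a serious obstacle here: the corollary is essentially a packaging of Lemma \ref{lowbound}. The only points that need care are that the direct-sum decomposition and the positivity $\Delta_\nu\ge 0$ genuinely descend to the compact quotient $X$ (which follows from the identifications in Section \ref{secBLO} and the fact that $\Delta_\nu$ is the closure of $\nabla^*\nabla$ on the compact manifold), and that the $O(m)$ in Lemma \ref{lowbound} can be taken uniform over the finite range of $p$.
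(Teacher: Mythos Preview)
Your proposal is correct and follows essentially the same route as the paper: the paper writes $\Delta_p(\tau(m))=\Delta_{\nu_p(\tau(m))}+E_p(\tau(m))$ (the quotient version of \eqref{BochHodg}), invokes $\Delta_{\nu_p(\tau(m))}\ge 0$, and then appeals to Lemma \ref{lowbound}. Your decomposition into irreducible $K$-types is just a slightly more explicit way of reading off the lower bound $C_p(m)$ of the endomorphism $E_p(\tau(m))$, so the two arguments coincide.
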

\begin{proof}
By \eqref{BochHodg} we have
\[
\Delta_p(\tau(m))=\Delta_{\nu_p(\tau(m))}+E_p(\tau(m)).
\]
Now by definition we have $\Delta_{\nu_p(\tau(m))}\ge 0$. Hence the corollary 
follows from Lemma \ref{lowbound}.
\end{proof}
\begin{prop}\label{Katotr}
Let $h_t^{\tau(m),\:p}$ be defined by \eqref{Defh} and let $H_t^0$ be the heat
kernel of the Laplacian $\widetilde{\Delta}_0$ on $C^\infty(\widetilde{X})$.
There exist $m_0\in\N$ and $C_5>0$ such that for all $m\geq m_0$, $g\in G$, 
$t\in(0,\infty)$ and $p\in\{0,\dots,d\}$ one has
\begin{align*}
\left|h_t^{\tau(m),\:p}(g)\right|\leq
C_5\dim(\tau(m))e^{-t\frac{m^2}{2}} H_t^0(g).
\end{align*}
\end{prop}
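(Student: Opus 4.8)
The plan is to deduce the bound from the semigroup domination result in Proposition~\ref{Kato} applied to each irreducible component of $\nu_p(\tau(m))$, together with the spectral lower bound from Lemma~\ref{lowbound}. First I would use the decomposition \eqref{BochHodgg} of $\widetilde\Delta_p(\tau(m))$ into the direct sum, over those $\nu\in\hat K$ with $[\nu_p(\tau(m)):\nu]\neq 0$, of the shifted Bochner-Laplace operators $\widetilde\Delta_\nu+(\tau(m)(\Omega)-\nu(\Omega_K))\Id$. Correspondingly the heat kernel $H_t^{\tau(m),p}$ splits (as an endomorphism-valued kernel) into a sum of the kernels $e^{-t(\tau(m)(\Omega)-\nu(\Omega_K))}H_t^\nu$, where $H_t^\nu$ is the Bochner heat kernel of $\widetilde E_\nu$ on $\widetilde X$. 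Taking the fibrewise trace and then the norm, one gets
\begin{align*}
\left|h_t^{\tau(m),p}(g)\right|\le\sum_{\substack{\nu\in\hat K\\ [\nu_p(\tau(m)):\nu]\neq 0}}[\nu_p(\tau(m)):\nu]\,e^{-t(\tau(m)(\Omega)-\nu(\Omega_K))}\,\bigl|\tr H_t^\nu(g)\bigr|.
\end{align*}

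Next I would bound each term. By Proposition~\ref{Kato}, $\|H_t^\nu(g)\|\le H_t^0(g)$, so $|\tr H_t^\nu(g)|\le\dim(\nu)\,H_t^0(g)$. For the exponential factor, Lemma~\ref{lowbound} gives a uniform $m_0$ and a constant so that $\tau(m)(\Omega)-\nu(\Omega_K)\ge C_p(m)\ge m^2/2$ for all $m\ge m_0$ and all $\nu$ occurring, hence $e^{-t(\tau(m)(\Omega)-\nu(\Omega_K))}\le e^{-tm^2/2}$. What remains is to control the combinatorial factor $\sum_\nu[\nu_p(\tau(m)):\nu]\dim(\nu)$. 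But this sum is exactly $\dim(\Lambda^p\pL^*\otimes V_{\tau(m)})=\binom{d}{p}\dim(\tau(m))$, since $\nu_p(\tau(m))=\Lambda^p\Ad^*\otimes\tau(m)|_K$ acts on $\Lambda^p\pL^*\otimes V_{\tau(m)}$ and the left side is the total dimension of that $K$-module. Setting $C_5:=\max_{0\le p\le d}\binom{d}{p}$ then yields the claimed inequality $|h_t^{\tau(m),p}(g)|\le C_5\dim(\tau(m))e^{-tm^2/2}H_t^0(g)$ for all $m\ge m_0$, $g\in G$, $t>0$.

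The only genuinely delicate point is making sure the splitting of the heat kernel along \eqref{BochHodgg} is legitimate and that the fibrewise trace and norm behave as stated; this is routine because the decomposition is an orthogonal direct sum of commuting self-adjoint operators, so the heat operator is the corresponding orthogonal direct sum and its kernel is block-diagonal with the asserted blocks, each of which is covariant of the right type so that Proposition~\ref{Kato} applies verbatim to it. Everything else is bookkeeping with dimensions and the elementary estimate from Lemma~\ref{lowbound}. I would write the argument in essentially the three displayed steps above, absorbing the $\dim(\nu)$ factors into the dimension count and invoking Proposition~\ref{Kato} and Corollary~\ref{lowbound1} (or directly Lemma~\ref{lowbound}) for the two main inequalities.
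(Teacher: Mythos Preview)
Your argument is correct and essentially coincides with the paper's proof: both use the factorization $H_t^{\tau(m),p}=e^{-tE_p(\tau(m))}\circ H_t^{\nu_p(\tau(m))}$ coming from \eqref{BochHodg}, then invoke Proposition~\ref{Kato} for the Bochner heat kernel, Lemma~\ref{lowbound} for the endomorphism $E_p(\tau(m))$, and finally pass to the trace, picking up the factor $\dim(\Lambda^p\pL^*\otimes V_{\tau(m)})=\binom{d}{p}\dim(\tau(m))$. The only cosmetic difference is that the paper applies Proposition~\ref{Kato} directly to the reducible representation $\nu_p(\tau(m))$ and bounds the operator norm before taking the trace, whereas you first split into irreducible $K$-types; your decomposition is in fact what justifies the paper's direct application of Proposition~\ref{Kato} (stated for $\nu\in\hat K$) to the reducible bundle.
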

\begin{proof}
Let $p\in\{0,\dots,n\}$. 
Let $H_t^{\nu_p(\tau(m))}$ and $H_t^{\tau(m),\:p}$ be defined by \eqref{DefH} and 
\eqref{DefHH}, respectively.
It follows from \eqref{BochHodg} and \eqref{BochHodgg} that
\begin{align*}
H_t^{\tau(m),\:p}(g)=e^{-tE_p(\tau(m))}\circ H_t^{\nu_p(\tau(m))}(g).
\end{align*}
Thus by proposition \ref{Kato} and lemma \ref{lowbound} there exists
an $m_0$ such that for $m\geq m_0$ one has
\begin{align*}
\left\|H_t^{\tau(m),\:p}(g)\right\|\leq
e^{-t\frac{m^2}{2}}H_t^0(g).
\end{align*}
Taking the trace in $\End(V_\tau(m)\otimes\Lambda^p\mathfrak{p}^*)$ for every 
$p\in\{0,\dots,d\}$, the
corollary follows. 
\end{proof}

Now we can continue with the study of the asymptotic behavior of 
$T_X(\tau(m))$. From now on we assume that $m\geq m_0$. 
Since $\tau(m)$ is acyclic, $T_X(\tau(m))$ is metric independent
\cite{Mu1}.
Especially we can rescale the metric by $\sqrt{m}$ without changing 
$T_X(\tau(m))$. Equivalently we can replace $\Delta_p(\tau(m))$ by $\frac{1}{m}
\Delta_p(\tau(m))$. Using \eqref{anator5} we get
\[
\log T_X(\tau(m))=\frac{1}{2}\frac{d}{ds}\left(\frac{1}{\Gamma(s)}
\int_0^\infty t^{s-1}K\left(\frac{t}{m},\tau(m)\right)\,dt\right)\bigg|_{s=0}.
\]
To continue, we split the $t$-integral into the integral over $[0,1]$ and
the integral over $[1,\infty)$. This leads to
\begin{equation}\label{splitint}
\begin{split}
\log T_X(\tau(m))=&\frac{1}{2}\frac{d}{ds}\left(\frac{1}{\Gamma(s)}
\int_0^1 t^{s-1}K\left(\frac{t}{m},\tau(m)\right)\,dt\right)\bigg|_{s=0}\\
&+\frac{1}{2}\int_1^\infty t^{-1}K\left(\frac{t}{m},\tau(m)\right)\,dt.
\end{split}
\end{equation}
We first consider the second term on the right. 
Using \eqref{anator3}, Proposition \ref{Katotr} and \eqref{TrBLO} we obtain
\[
\begin{split}
\left|K\left(\frac{t}{m},\tau(m)\right)\right|
&\leq C_5
e^{-\frac{m}{2}t}\dim(\tau(m))\int_{\Gamma\bs G}\sum_{\gamma\in\Gamma}
H_{t/m}^0(g^{-1}\gamma g)\,d\dot g\\
&= C_5 e^{-\frac{m}{2}t}\dim(\tau(m))\Tr(e^{-\frac{t}{m}\Delta_0}).
\end{split}
\]
Furthermore, by the heat asymptotic we have 
\[
\Tr(e^{-\frac{1}{m}\Delta_0})=C_d\vol(X)m^{d/2}+O\left(m^{(d-1)/2}\right)
\]
as $m\to\infty$. Hence there exists $C_6>0$ such that
\[
\left|K\left(\frac{t}{m},\tau(m)\right)\right|\le C_6 m^{d/2}
\dim(\tau(m))e^{-\frac{m}{2}t},\quad t\ge 1.
\]
Thus we obtain
\[
\left|\int_1^\infty t^{-1}K\left(\frac{t}{m},\tau(m)\right)\;dt\right|\le
C_6 m^{d/2}\dim(\tau(m))e^{-m/4}\int_1^\infty t^{-1}e^{-\frac{m}{4}t}\;dt.
\]
Using \eqref{weyldim}, it follows that
\begin{equation}\label{term2}
\int_1^\infty
t^{-1}K\left(\frac{t}{m},\tau(m)\right)\;dt=O\left(e^{-m/8}\right).
\end{equation}
Now we turn to the first term on the right hand side of \eqref{splitint}. We
need to estimate $K(t,\tau(m))$ for $0<t\le 1$. 
To this end we use \eqref{anator3} to decompose $K(t,\tau(m))$ into the sum
of two terms. The contribution of the identity is given by
\begin{align*}
I(t,\tau(m)):=\vol(X)k_t^{\tau(m)}(1)
\end{align*}
and 
\begin{align*}
H(t,\tau(m)):=\int_{\Gamma\bs G}\sum_{\substack{\gamma\in\Gamma\\ \gamma\neq
1}}k_t^{\tau(m)}(g^{-1}\gamma g)\,d\dot g
\end{align*}
is the hyperbolic contribution to $K(t,\tau(m))$. First we consider the 
hyperbolic contribution.  
Using Proposition \ref{Katotr} and Proposition \ref{esthyp}, it follows that
for every $m\geq m_0$ and every $t\in\left(0,1\right]$ we have
\begin{align*}
\sum_{\substack{\gamma\in\Gamma\\ \gamma\neq
1}}\left|k_{t}^{\tau(m)}(g^{-1}\gamma g)\right|
&\leq C_5 e^{-t\frac{m^2}{2}}\dim(\tau(m))
\sum_{\substack{\gamma\in\Gamma\\ \gamma\neq
1}}H_{t}^{0}(g^{-1}\gamma g)\\ &\leq C_6 \dim(\tau(m))
e^{-t\frac{m^2}{2}}C_0e^{-c_0/t}.
\end{align*}
Hence using \eqref{weyldim} we get
\begin{equation*}
\left| H\left(\frac{t}{m},\tau(m)\right)\right|\le C_7 e^{-c_1m}e^{-c_1/t},
\quad 0<t\le 1.
\end{equation*}
This implies that there is $c_2>0$ such that
\begin{equation*}
\frac{d}{ds}\left(\frac{1}{\Gamma(s)}\int_0^1 t^{s-1}H\left(\frac{t}{m},\tau(m)
\right)\,dt\right)\bigg|_{s=0}=\int_0^1 t^{-1}H\left(\frac{t}{m},\tau(m)
\right)\,dt=O\left(e^{-c_2m}\right)
\end{equation*}
as $m\to\infty$.\\ 
It remains to consider the contribution of the identity.  
Again $k_t^\tau$ is a $K$-finite
function in $\mathcal{C}(G)$ and thus by \cite[Theorem 3]{HC} Harish-Chandra's 
Plancherel theorem holds for $k_t^\tau$. For groups of real rank one which do
not
possess a compact Cartan subgroup
it is stated in \cite[Theorem 13.2]{Knapp}. Let the $\sigma_{\tau(m),k}$ and
$\lambda_{\tau(m),k}$, $k=0,\dots,n$, be defined by \eqref{lambdatau} and
\eqref{sigmatau}, respectively. Then for every $k$ we have
$\sigma_{\tau(m),k}\neq w_0\sigma_{\tau(m),k}$.  
Thus using \eqref{P-Polynom is W-inv} and Proposition \ref{chartau} we
obtain
\begin{align}\label{identcontr4}
I(t,\tau(m))=2\vol(X)\sum_{k=0}^n (-1)^{k+1}
e^{-t\lambda_{\tau(m),k}^2}\int_{\R}e^{-t\lambda^2}P_{\sigma_{
\tau(m),k}}(i\lambda)d\lambda.
\end{align}
Here the $P_{\sigma_{\tau(m),k}}$ are the polynomials defined in
\eqref{plancherelmass}.
The polynomials are given explicitly as follows.
\begin{lem}\label{Erstes lemma}
The Plancherel polynomial $P_{\sigma_{\tau(m),k}}(t)$ is given by
\begin{align*}
P_{\sigma_{\tau(m),k}}(t)=-c(n)(-1)^{k}\dim(\tau(m))
\prod_{\substack{j=0\\ j\neq k}}^{n}\frac{t^{2}
-\lambda_{\tau(m),j}^{2}}{\lambda_{\tau(m),k}^{2}-\lambda_{\tau(m),j}^{2}},
\end{align*}
where $c(n)$ is the 
constant occurring in the description of the Plancherel 
polynomial  by \eqref{plancherelmass}. 
\end{lem}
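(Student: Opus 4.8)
The plan is to substitute the highest weight of $\sigma_{\tau(m),k}$ from \eqref{sigmatau} into the definition \eqref{plancherelmass} of the Plancherel polynomial and to evaluate the product over $\Delta^+(\gL_\C,\hL_\C)$ explicitly, matching the resulting linear factors with the numbers $\lambda_{\tau(m),j}$ of \eqref{lambdatau}. Since $\{e_j\}$ is orthonormal for the form \eqref{killnorm} (as reflected in \eqref{HC-Iso auf Casimir}), one has $\langle\mu,e_i\pm e_j\rangle=\mu_i\pm\mu_j$ for any weight $\mu=\sum_j\mu_je_j$; in any case the normalizing constant cancels in \eqref{plancherelmass}. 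Splitting $\Delta^+(\gL_\C,\hL_\C)$ into the roots $e_1\pm e_j$ ($2\le j\le n+1$) and the roots $e_i\pm e_j$ ($2\le i<j\le n+1$), writing $\Lambda(\sigma_{\tau(m),k})+\rho_M=\sum_{j=2}^{n+1}b_je_j$, and using $\rho_G=ne_1+\rho_M$, the definition \eqref{plancherelmass} turns into
\[
P_{\sigma_{\tau(m),k}}(z)=-c(n)\,\frac{\prod_{j=2}^{n+1}(z^2-b_j^2)\ \prod_{2\le i<j\le n+1}(b_i^2-b_j^2)}{\prod_{1\le i<j\le n+1}(\rho_i^2-\rho_j^2)}.
\]

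Next I would read off the coordinates $b_j$ from \eqref{sigmatau} and \eqref{Definition von rho(M)} and compare with \eqref{lambdatau}: one finds $b_j=\lambda_{\tau(m),j-2}$ for $2\le j\le k+1$ and $b_j=\lambda_{\tau(m),j-1}$ for $k+2\le j\le n+1$, so that the (strictly decreasing) tuple $(b_2,\dots,b_{n+1})$ equals $(\lambda_{\tau(m),0},\dots,\lambda_{\tau(m),k-1},\lambda_{\tau(m),k+1},\dots,\lambda_{\tau(m),n})$. Consequently $\prod_{j=2}^{n+1}(z^2-b_j^2)=\prod_{j\neq k}(z^2-\lambda_{\tau(m),j}^2)$, and, the assignment $j\mapsto$ index being order-preserving, $\prod_{2\le i<j\le n+1}(b_i^2-b_j^2)=\prod_{0\le a<b\le n,\ a,b\neq k}(\lambda_{\tau(m),a}^2-\lambda_{\tau(m),b}^2)$. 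Moreover $k_i(\tau(m))+\rho_i=\lambda_{\tau(m),i-1}$ for $1\le i\le n+1$, so the Weyl dimension formula \eqref{Dimension tau} gives $\dim(\tau(m))\cdot\prod_{1\le i<j\le n+1}(\rho_i^2-\rho_j^2)=\prod_{0\le a<b\le n}(\lambda_{\tau(m),a}^2-\lambda_{\tau(m),b}^2)$.

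Finally I would factor out of $\prod_{0\le a<b\le n}(\lambda_{\tau(m),a}^2-\lambda_{\tau(m),b}^2)$ the terms containing the index $k$, which contribute
\[
\prod_{a=0}^{k-1}(\lambda_{\tau(m),a}^2-\lambda_{\tau(m),k}^2)\ \prod_{b=k+1}^{n}(\lambda_{\tau(m),k}^2-\lambda_{\tau(m),b}^2)=(-1)^k\prod_{\substack{j=0\\j\neq k}}^{n}(\lambda_{\tau(m),k}^2-\lambda_{\tau(m),j}^2);
\]
this is the source of the sign $(-1)^k$. Inserting these identities into the displayed formula for $P_{\sigma_{\tau(m),k}}(z)$ and using $(-1)^{-k}=(-1)^k$ gives the claimed expression. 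The argument is entirely elementary; the only delicate points are the reindexing in the two product identities and the correct extraction of the sign, and one should confirm at the start the orthonormality of $\{e_j\}$ implicit in \eqref{HC-Iso auf Casimir}.
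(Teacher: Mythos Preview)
Your proposal is correct and follows essentially the same route as the paper's own proof: substitute $\Lambda(\sigma_{\tau(m),k})+\rho_M$ into \eqref{plancherelmass}, split the product over $\Delta^+(\gL_\C,\hL_\C)$ into the $e_1\pm e_j$ and the $e_i\pm e_j$ ($i,j\ge2$) factors, identify the resulting coordinates with the $\lambda_{\tau(m),j}$, and invoke the Weyl dimension formula \eqref{Dimension tau} to extract $\dim(\tau(m))$ together with the sign $(-1)^k$. Your reindexing and sign extraction match the paper's computation line by line.
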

\begin{proof}
This is proved in Br\"ocker's thesis \cite[p. 60]{Brocker}. For convenience, 
we recall the proof. To safe notation, put
\[
\lambda_{i}:=\lambda_{\tau(m),i},\quad i=0,\dots,n.
\]
By \eqref{sigmatau} we have
\begin{equation*}
\begin{split}
\Lambda(\sigma_{\tau(m),k}) 
+\rho_{M}=&\sum_{i=2}^{k+1}(\tau_{i-1}+m+2+n-i)e_{i}+
\sum_{i=k+2}^{n+1}(\tau_{i}+m+n+1-i)e_{i}\\
&=\sum_{i=2}^{k+1}\lambda_{i-2}e_{i}+\sum_{i=k+2}^{n+1}\lambda_{i-1}e_{i}.
\end{split}
\end{equation*}
Hence by \eqref{plancherelmass} and \eqref{Dimension tau} we 
have
\begin{equation}\label{explicitpl}
\begin{split}
P_{\sigma_{\tau(m),k}}(t)=&-c(n)\prod_{j=1}^{n}\prod_{q=j+1}^{n+1}
\frac{\left<te_{1}+\sum_{i=2}^{k+1}\lambda_{i-2}e_{i}+\sum_{i=k+2}^{n+1}
\lambda_{i-1}e_{i},e_{j}+e_{q}\right>}
{\left<\sum_{l=1}^{n+1}\rho_{l}e_{l},e_{j}+e_{q}\right>}\\
&\hskip37pt\cdot\prod_{j=1}^{n}\prod_{q=j+1}^{n+1}\frac{\left<te_{1}
+\sum_{i=2}^{k+1}\lambda_{i-2}e_{i}+\sum_{i=k+2}^{n+1}
\lambda_{i-1}e_{i},e_{j}-e_{q}\right>}
{\left<\sum_{l=1}^{n+1}\rho_{l}e_{l},e_{j}-e_{q}\right>}\\
=&-c(n)\prod_{\substack{0\leq i\leq n\\ i\neq k}}\left(t^{2}
-\lambda_{i}^{2}\right)\prod_{\substack{0\leq j<i\leq n\\ i,j\neq k}}
\left(\lambda_{j}^{2}-\lambda_{i}^{2}\right)
\prod_{1\leq j<i\leq n+1}(\rho_{j}^{2}-\rho_{i}^{2})^{-1}\\
=&-c(n)(-1)^{k}\prod_{0\leq j<i\leq n}\frac{\lambda_{j}^{2}
-\lambda_{i}^{2}}{\rho_{j+1}^{2}-\rho_{i+1}^{2}}
\prod_{\substack{j=0\\ j\neq k}}^{n}\frac{t^{2}-\lambda_{j}^{2}}
{\lambda_{k}^{2}-\lambda_{j}^{2}}\\
=&-c(n)(-1)^{k}\dim(\tau(m))\prod_{\substack{j=0\\ j\neq k}}^{n}
\frac{t^{2}-\lambda_{j}^{2}}{\lambda_{k}^{2}-\lambda_{j}^{2}}.
\end{split}
\end{equation}
\end{proof}
Now recall that by \eqref{lambdatau} we have 
\[
\lambda_{\tau(m),i}=m+\tau_{i+1}+n-i.
\]
Since $\tau_i\ge \tau_j$ for $i<j$, it follows that
\[
|\lambda_{\tau(m),i}^2-\lambda_{\tau(m),j}^2|\ge 1,\quad \forall i\neq j.
\]
Using Lemma \ref{Erstes lemma}, it follows that
\[
P_{\sigma_{\tau(m),k}}(t)=\sum_{i=0}^n a_{k,i}(m)t^{2i}
\]
and there exists $C>0$ such that
\[
|a_{k,i}(m)|\le C m^{2n+n(n+1)/2}
\]
for all $k,i=0,\dots,n$ and $m\in\N$. 
Furthermore $\lambda_{\tau(m),i}\ge m$ for $i=0,...,n$. Together with Lemma 
\ref{Erstes lemma} it follows that there exist $C,c>0$ such that
\[
\left|I\left(\frac{t}{m},\tau(m)\right)\right|\le C e^{-cm}e^{-ct},\quad t\ge 1.
\]
Hence we get
\[
\int_1^\infty t^{-1}I\left(\frac{t}{m},\tau(m)\right)\,dt=O\left(e^{-cm}\right).
\]
This implies that we can replace the integral over $[0,1]$ by the integral over
$[0,\infty)$. We need the following auxiliary proposition.
\begin{prop}\label{identcontr}
Let $c>0$ and $\sigma\in\hat{M}$. For 
${\rm{Re}}(s)\ge n+1$ let 
\begin{align*}
E(s):=\int_0^\infty t^{s-1}e^{-t c^{2}}\left(
\int_{\R}e^{-t\lambda^2}P_{\sigma}(i\lambda)\;d\lambda\right) dt.
\end{align*}
Then $E(s)$ has a meromorphic continuation to $\mathbb{C}$. Moreover $E(s)$
is regular at zero and
\begin{align*}
E(0)=-2\pi\int_{0}^{c}P_{\sigma}(t)dt.
\end{align*}
\end{prop}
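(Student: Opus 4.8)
The plan is to perform both integrations in closed form, which expresses $E(s)$ as an explicit finite sum of Gamma functions, and then to read off the value at $s=0$ using the reflection formula.

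The starting point is the observation that $P_\sigma$ is an even polynomial. Indeed, among the positive roots in $\Delta^+(\gL_\C,\hL_\C)$ the only ones pairing nontrivially with $e_1$ are $e_1+e_j$ and $e_1-e_j$ for $j=2,\dots,n+1$, and these occur in matched pairs, so by \eqref{plancherelmass} the variable $z$ enters $P_\sigma(z)$ only through factors of the form $z^2-(\,\cdot\,)^2$; thus $P_\sigma(z)=\sum_{j=0}^n a_j z^{2j}$ for suitable real $a_j=a_j(\sigma)$, and hence $P_\sigma(i\lambda)=\sum_{j=0}^n(-1)^j a_j\lambda^{2j}$. Using the Gaussian moment formula $\int_\R e^{-t\lambda^2}\lambda^{2j}\,d\lambda=\Gamma(j+1/2)\,t^{-j-1/2}$ for $t>0$, we obtain
\[
\int_\R e^{-t\lambda^2}P_\sigma(i\lambda)\,d\lambda=\sum_{j=0}^n(-1)^j a_j\,\Gamma(j+1/2)\,t^{-j-1/2}.
\]
This quantity is $O(t^{-n-1/2})$ as $t\downarrow 0$ and $O(t^{-1/2})$ as $t\to\infty$; since $c>0$, the factor $e^{-tc^2}$ makes the integral defining $E(s)$ absolutely convergent for $\mathrm{Re}(s)>n+1/2$, in particular on the region in the statement.

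Next I would substitute this expansion into the definition of $E(s)$ and integrate termwise (the sum being finite, this is harmless), using $\int_0^\infty t^{w-1}e^{-tc^2}\,dt=\Gamma(w)\,c^{-2w}$ with $w=s-j-1/2$. This produces the closed form
\[
E(s)=\sum_{j=0}^n(-1)^j a_j\,\Gamma(j+1/2)\,\Gamma(s-j-1/2)\,c^{\,2j+1-2s},
\]
which is manifestly a meromorphic function of $s\in\C$; this gives the asserted meromorphic continuation. Each factor $\Gamma(s-j-1/2)$ has its poles only at half-integers, hence none at $s=0$, so $E$ is holomorphic at $s=0$.

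Finally, at $s=0$ we have $E(0)=\sum_{j=0}^n(-1)^j a_j\,\Gamma(j+1/2)\,\Gamma(-j-1/2)\,c^{\,2j+1}$. To evaluate the Gamma product I would combine the reflection formula $\Gamma(j+1/2)\,\Gamma(1/2-j)=\pi/\sin(\pi(j+1/2))=\pi(-1)^j$ with the functional equation $\Gamma(1/2-j)=-(j+1/2)\,\Gamma(-j-1/2)$, which together give $\Gamma(j+1/2)\,\Gamma(-j-1/2)=-\pi(-1)^j/(j+1/2)$. The factors $(-1)^j$ then cancel, and
\[
E(0)=-2\pi\sum_{j=0}^n a_j\,\frac{c^{\,2j+1}}{2j+1}=-2\pi\sum_{j=0}^n a_j\int_0^c t^{2j}\,dt=-2\pi\int_0^c P_\sigma(t)\,dt,
\]
which is the assertion. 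No genuine difficulty arises here; the only points requiring care are the parity of $P_\sigma$ — which is exactly what makes only even Gaussian moments appear, hence only half-integer arguments of $\Gamma$, hence regularity at $s=0$ — and the bookkeeping of signs in the reflection-formula step.
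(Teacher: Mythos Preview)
Your argument is correct. The parity of $P_\sigma$, the Gaussian moments, the Mellin integral, and the reflection-formula bookkeeping all check out, and the regularity at $s=0$ is exactly for the reason you give (only half-integer poles arise).

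The paper's own proof is essentially a reference: it notes that $P_\sigma(i\lambda)$ is even and then invokes Lemmas 2 and 3 of Fried \cite{Fried}, where the evaluation is carried out by repeated integration by parts in the $\lambda$-integral (reducing the power $\lambda^{2j}$ step by step). Your route is different in execution: you evaluate each Gaussian moment directly to get the closed form $E(s)=\sum_j(-1)^j a_j\,\Gamma(j+\tfrac12)\,\Gamma(s-j-\tfrac12)\,c^{2j+1-2s}$ and then read off the value at $s=0$ via the reflection formula. Both approaches begin from the same observation (evenness reduces to monomials $\lambda^{2j}$) and are equally elementary; yours has the advantage of being fully self-contained and of exhibiting the meromorphic continuation and pole structure of $E(s)$ in one stroke, while the integration-by-parts route avoids the Gamma identities at the cost of an inductive reduction.
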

\begin{proof}
By \eqref{plancherelmass} every $P_{\sigma}(i\lambda)$ is an 
even polynomial in $\lambda$. The proposition is obtained using integration by
parts (see \cite{Fried} Lemma 2 and Lemma 3). 
\end{proof}
Changing variables by $t\mapsto t\cdot m$, it follows from the proposition that
\[
\frac{d}{ds}\left(\frac{1}{\Gamma(s)}\int_0^\infty t^{s-1} I\left(\frac{t}{m},
\tau(m)\right)\;dt\right)\bigg|_{s=0}
=\frac{d}{ds}\left(\frac{1}{\Gamma(s)}\int_0^\infty t^{s-1} I\left(t,
\tau(m)\right)\;dt\right)\bigg|_{s=0}.
\]
By Proposition \ref{identcontr} the Mellin transform
\[
\int_0^\infty t^{s-1}I(t,\tau(m))\,dt
\]
of $I(t,\tau(m))$ is a meromorphic function of  $s\in\C$, which is regular
at $s=0$. Let $\mathcal{M}I(\tau(m))$ denote its value at $s=0$. By
\eqref{anator5}, $\frac{1}{2}\mathcal{M}I(\tau(m))$ is the contribution of the 
identity to $\log T_X(\tau(m))$. Combining our estimates, we have proved
\begin{equation}\label{tor7}
\log T_X(\tau(m))=\frac{1}{2}\mathcal{M}I(\tau(m))+O\left(e^{-cm}\right)
\end{equation}
as $m\to\infty$.\\
Next we will identify $\frac{1}{2}\mathcal{M}I(\tau(m))$ with the $L^2$-torsion.
Recall its definition \cite{Lo}. For $p=0,\dots,d$ let 
$\Tr_\Gamma(e^{-t\tilde\Delta_p(\tau(m))})$ denote the $\Gamma$-trace of the heat
operator $e^{-t\tilde\Delta_p(\tau(m))}$ on $\tilde X$ (see \cite{Lo}). Recall that 
$e^{-t\widetilde\Delta_p(\tau(m))}$ is a convolution operator whose kernel
is given by the function
\[
H^{\tau(m),p}_t\colon G\to \End(\Lambda^p\pL^*\otimes V_{\tau(m)})
\]
which satisfies \eqref{covar}. Let $h^{\tau(m),p}_t=\tr H^{\tau(m),p}_t$. Then it 
follows that
\begin{equation}\label{gammatr}
\Tr_{\Gamma}\left(e^{-t\widetilde{\Delta}_p(\tau(m))}\right)=\vol(X)
h_t^{\tau(m),p}(1). 
\end{equation}
Let $k^{\tau(m)}_t$ be defined by \eqref{alter}. Then it follows that
\begin{equation}\label{l2-tor1}
\sum_{p=1}^d (-1)^p p\Tr_\Gamma\left(e^{-t\widetilde\Delta_p(\tau(m))}\right)=
\vol(X)k^{\tau(m)}_t(1)=I(t,\tau(m)).
\end{equation}
Using Proposition \ref{Katotr} it follows that for $m\ge m_0$ we have
\[
\Tr_{\Gamma}(e^{-t\widetilde{\Delta}_p(\tau(m))})=O\left(e^{-t\frac{m^2}{2}}\right)
\]
as $t\rightarrow \infty$. Furthermore, applying the Plancherel theorem 
to $h^{\tau(m),p}_t(1)$ and using \eqref{gammatr}, it follows that as
$t\to 0$, there is an asymptotic expansion
\[
\Tr_{\Gamma}\left(e^{-t\widetilde{\Delta}_p(\tau(m))}\right)\sim
\sum_{j\ge 0}a_j t^{-d/2+j}.
\]
This implies that
\[
\int_0^\infty t^{s-1}\sum_{p=1}^d(-1)^p p \Tr_\Gamma(e^{-t\widetilde\Delta_p(\tau(m))})
\,dt
\]
converges absolutely for $\textup{Re}(s)>d/2$ and admits a meromorphic
extension to $\C$ which is holomorphic at $s=0$.
Hence for $m\ge m_0$ the $L^2$-torsion is defined by
\[
\log T^{(2)}_X(\tau(m))=\frac{1}{2}\frac{d}{ds}\left(\frac{1}{\Gamma(s)}
\int_0^\infty t^{s-1}\sum_{p=1}^d(-1)^p p \Tr_\Gamma(e^{-t\widetilde\Delta_p(\tau(m))})
\,dt\right)\Bigg|_{s=0}.
\]
Using \eqref{l2-tor1} it follows that
\begin{equation}\label{l2-tor2}
\log T^{(2)}_X(\tau(m))=\frac{1}{2}\mathcal{M}I(\tau(m)).
\end{equation}
Combined with \eqref{tor7}, we obtain the proof of Proposition \ref{prop1.2}.

To compute the $L^2$-torsion, we observe that
by \eqref{identcontr4} and Proposition \ref{identcontr} we have
\begin{equation}\label{l2-tor3}
\mathcal{M}I(\tau(m))
=4\pi\vol(X)\sum_{k=0}^n(-1)^k\int_0^{\lambda_{\tau(m),k}}
P_{\sigma_{\tau(m),k}}(\lambda)\,d\lambda.
\end{equation}
Using the explicit form of the Plancherel polynomial as in the first
equality of \eqref{explicitpl} together with $\lambda_i=\tau_{i+1}+m+n-i$,
it follows that 
\begin{equation}\label{polyn}
P_\tau(m):=2\pi \sum_{k=0}^n(-1)^k\int_0^{\lambda_{\tau(m),k}}
P_{\sigma_{\tau(m),k}}(\lambda)\,d\lambda
\end{equation}
is a polynomial in $m$ of degree $n(n+1)/2+1$ whose coefficients 
depend only on $n$ and $\tau$. Moreover by \eqref{l2-tor2} we get
\[
\log T^{(2)}_X(\tau(m))=\vol(X)P_\tau(m), 
\]
which proves Proposition \ref{prop1.3}. 

It remains to determine the leading coefficient of $P_\tau(m)$. To this end we
 need some additional facts about the Plancherel polynomials.
\begin{lem}\label{Zweites lemma}
For every sequence $s_{0},\dots,s_{n}$, $s_{i}\neq s_{j}$ for $i\neq j$, one
has 
\begin{align*}
\sum_{k=0}^{n}\prod_{\substack{j=0\\ j\neq k}}^{n}\frac{t-s_{j}}{s_{k}-s_{j}}=1.
\end{align*}
\end{lem}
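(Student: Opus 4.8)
The plan is to recognize the left-hand side as the Lagrange interpolation polynomial associated with the constant function $1$ and the interpolation nodes $s_{0},\dots,s_{n}$, and then to appeal to the uniqueness of polynomial interpolation. Concretely, put
\[
P(t):=\sum_{k=0}^{n}\prod_{\substack{j=0\\ j\neq k}}^{n}\frac{t-s_{j}}{s_{k}-s_{j}}.
\]
Since $s_{i}\neq s_{j}$ for $i\neq j$, every denominator is nonzero, so each summand is a well-defined polynomial in $t$; as the product over $j\neq k$ involves $n$ linear factors, each summand has degree $n$, and hence $P$ is a polynomial in $t$ of degree at most $n$.

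Next I would evaluate $P$ at each node. Fix $i\in\{0,\dots,n\}$. For $k\neq i$ the product defining the $k$-th summand contains the factor $(t-s_{i})/(s_{k}-s_{i})$, which vanishes at $t=s_{i}$; thus only the term with $k=i$ contributes, and it equals $\prod_{j\neq i}(s_{i}-s_{j})/(s_{i}-s_{j})=1$. Therefore $P(s_{i})=1$ for every $i=0,\dots,n$.

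To finish, note that $P(t)-1$ is a polynomial of degree at most $n$ vanishing at the $n+1$ distinct points $s_{0},\dots,s_{n}$, so it is identically zero. Hence $P(t)\equiv 1$, which is the assertion of the lemma.

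I do not expect any real obstacle here: this is precisely the standard uniqueness argument for Lagrange interpolation, and the only things to verify are the elementary degree bound for $P$ and the evaluations $P(s_{i})=1$. An alternative, slightly more computational route would be to expand both sides as polynomials in $t$ and compare coefficients, but the interpolation argument is cleaner and avoids that bookkeeping.
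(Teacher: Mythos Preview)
Your argument is correct and is essentially the same as the paper's: observe that the sum is a polynomial in $t$ of degree at most $n$ which equals $1$ at the $n+1$ distinct points $s_{0},\dots,s_{n}$, hence is identically $1$. The paper states this in one line; your version simply spells out the evaluations $P(s_i)=1$ in slightly more detail.
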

\begin{proof}
The expression is a polynomial in $t$ of order $n$ and is equal to 1 at the 
$n+1$ points $s_{0},\dots,s_{n}$. 
\end{proof}

\begin{corollary}\label{planchasymp}
One has
\begin{align*}
\sum_{k=0}^{n}(-1)^{k}P_{\sigma_{\tau(m),k}}(t)=-c(n){\rm{dim}}(\tau(m)).
\end{align*}
\end{corollary}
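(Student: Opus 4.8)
The plan is to reduce the statement to the Lagrange-interpolation identity of Lemma~\ref{Zweites lemma} by inserting the closed form for the Plancherel polynomials established in Lemma~\ref{Erstes lemma}.

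First I would write out, for each $k=0,\dots,n$,
\[
(-1)^{k}P_{\sigma_{\tau(m),k}}(t)
=-c(n)\dim(\tau(m))\prod_{\substack{j=0\\ j\neq k}}^{n}
\frac{t^{2}-\lambda_{\tau(m),j}^{2}}{\lambda_{\tau(m),k}^{2}-\lambda_{\tau(m),j}^{2}},
\]
using Lemma~\ref{Erstes lemma} together with $(-1)^{k}\cdot(-1)^{k}=1$. Summing over $k$ and pulling the constant $-c(n)\dim(\tau(m))$ out of the sum leaves
\[
\sum_{k=0}^{n}(-1)^{k}P_{\sigma_{\tau(m),k}}(t)
=-c(n)\dim(\tau(m))\sum_{k=0}^{n}\prod_{\substack{j=0\\ j\neq k}}^{n}
\frac{t^{2}-\lambda_{\tau(m),j}^{2}}{\lambda_{\tau(m),k}^{2}-\lambda_{\tau(m),j}^{2}}.
\]

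Next I would apply Lemma~\ref{Zweites lemma} with the sample points $s_{j}:=\lambda_{\tau(m),j}^{2}$, $j=0,\dots,n$, and with the polynomial variable specialized to $t^{2}$. This is legitimate provided the $s_{j}$ are pairwise distinct, which I would check from \eqref{lambdatau}: since $\lambda_{\tau(m),j}=m+\tau_{j+1}+n-j$ is a strictly decreasing sequence of positive numbers, the squares $\lambda_{\tau(m),j}^{2}$ are distinct. Lemma~\ref{Zweites lemma} then shows that the inner sum equals $1$ identically in $t$, and the corollary follows at once.

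I do not expect any genuine obstacle here; the argument is purely formal once Lemmas~\ref{Erstes lemma} and~\ref{Zweites lemma} are in hand, and the only point requiring (elementary) attention is the distinctness of the $\lambda_{\tau(m),j}^{2}$, which is precisely the hypothesis needed to invoke the interpolation identity.
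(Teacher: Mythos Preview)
Your proposal is correct and is exactly the argument the paper has in mind: it simply cites Lemmas~\ref{Erstes lemma} and~\ref{Zweites lemma}, and your write-up spells out this deduction (including the distinctness check on the $\lambda_{\tau(m),j}^{2}$) in full.
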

\begin{proof}
This follows from lemma \ref{Erstes lemma} and lemma \ref{Zweites lemma}.
\end{proof}
Now we are ready to determine the leading term. 
By \eqref{lambdatau} we have 
\[
\lambda_{\tau(m),0}>
\lambda_{\tau(m),1}>\dots>\lambda_{\tau(m),n}.
\]
Using \ref{Erstes lemma} and Corollary \ref{planchasymp}, we get
\begin{equation}\label{intplanch}
\begin{split}
\sum_{k=0}^{n}(-1)^{k}\int_{0}^{\lambda_{\tau(m),k}}P_{\sigma_{\tau(m),k}}(t)dt
&=\int_0^{\lambda_{\tau(m),n}}\sum_{k=0}^{n}(-1)^{k}
P_{\sigma_{\tau(m),k}}(t)dt\\
&\quad -c(n)\dim(\tau(m))\sum_{k=0}^{n-1}
\int_{\lambda_{\tau(m),n}}^{\lambda_{\tau(m),k}}\prod_{\substack{j=0\\ 
j\neq k}}^{n}\frac{t^{2}-\lambda_{\tau(m),j}^{2}}{\lambda_{\tau(m),k}^{2}
-\lambda_{\tau(m),j}^{2}}dt\\
&=-c(n)\lambda_{\tau(m),n}\dim\tau(m)\\
&\quad-c(n)\dim\tau(m)\sum_{k=0}^{n-1}
\int_{\lambda_{\tau(m),n}}^{\lambda_{\tau(m),k}}\prod_{\substack{j=0\\ 
j\neq k}}^{n}\frac{t^{2}-\lambda_{\tau(m),j}^{2}}
{\lambda_{\tau(m),k}^{2}-\lambda_{\tau(m),j}^{2}}\,dt.
\end{split}
\end{equation}
Now recall that by \eqref{lambdatau} we have
\begin{equation}\label{lambdatau1}
\lambda_{\tau(m),k}=\tau_{k+1}+m+n-k.
\end{equation}
Using \eqref{weyldim} it follows that the first term on 
the right hand side of \eqref{intplanch} equals 
\[
-c(n)m\dim\tau(m)+O(m^{\frac{n(n+1)}{2}}).
\]
Furthermore we have
\begin{equation}
\begin{split}
\int_{\lambda_{\tau(m),n}}^{\lambda_{\tau(m),k}}\prod_{\substack{j=0\\ 
j\neq k}}^{n}\frac{t^{2}-\lambda_{\tau(m),j}^{2}}
{\lambda_{\tau(m),k}^{2}-\lambda_{\tau(m),j}^{2}}\,dt
=\int_{\tau_{n+1}}^{\tau_{k+1}+n-k}\prod_{\substack{j=0\\j\neq k}}^{n}
\frac{(t+m)^{2}-\lambda_{\tau(m),j}^{2}}
{\lambda_{\tau(m),k}^{2}-\lambda_{\tau(m),j}^{2}}\,dt.
\end{split}
\end{equation}
Using \eqref{lambdatau1}, a direct computation shows that the integrand on 
the right hand side is bounded as $m\to\infty$. Hence we get
\begin{align}
\sum_{k=0}^{n-1}\int_{\lambda_{\tau(m),n}}^{\lambda_{\tau(m),k}}
\prod_{\substack{j=0\\ j\neq k}}^{n}
\frac{t^{2}-\lambda_{\tau(m),j}^{2}}{\lambda_{\tau(m),k}^{2}
-\lambda_{\tau(m),j}^{2}}=O(1),\: \text{as $m\rightarrow \infty$}. 
\end{align}
Using \eqref{weyldim}, we can estimate the second term on the right hand side 
of \eqref{intplanch} by $O(m^{\frac{n(n+1)}{2}})$. Together with \eqref{polyn} 
 we get
\begin{align*}
P_\tau(m)=2\pi \sum_{k=0}^{n}(-1)^{k}\int_{0}^{\lambda_{\tau(m),k}}P_{\sigma_{\tau(m),k}}(t)dt
=-2\pi c(n)m\dim(\tau(m))+O(m^{\frac{n(n+1)}{2}}),\quad
\end{align*}
as $m\rightarrow \infty$.

Now the proof of Corollary \ref{asympt1} follows from \eqref{tor7}  with 
\begin{align}\label{Explizite Form der Konstanten}
C(n):=2\pi c(n).
\end{align}
\begin{bmrk}
We have assumed that $\tau_{1},\dots,\tau_{n+1}$ are natural numbers and that 
$m\in\mathbb{N}$. Clearly, we can also assume that $\tau_{1},\dots,\tau_{n+1}$ 
and $m$ are in $\frac{1}{2}\mathbb{N}$. Then the obvious modifications give 
Theorem \ref{theo1.1} and Corollary \ref{asympt1} also in this case. 
\end{bmrk}

\begin{bmrk}\label{Letzte Bemerkung}
At the end of this section we  compute the Polynomial $P_{\tau}(m)$ in 
the three-dimensional case explicitly. In this case the group $G$ can be 
realized as ${\rm{SL}}_{2}(\mathbb{C})$, $K$ can be realized as ${\rm{SU}}(2)$ 
and $M$ can be realized as ${\rm{SO}}(2)$. If $c(n)$ is the constant in 
\eqref{plancherelmass}, it follows from \cite{Knapp}, 
Theorem 11.2, and a minor correction that
\begin{align*}
c(n)=\frac{1}{4\pi^{2}}.
\end{align*}
For $l\in\mathbb{N}$ write $\sigma_{l}$ for the representation of $M$ with 
highest weight $le_{2}$ as in \eqref{Darstellungen von M}. Then by 
\eqref{plancherelmass} one has
\begin{align*}
P_{\sigma_{l}}(z)=-\frac{1}{4\pi^{2}}(z^{2}-l^{2}). 
\end{align*}
Let $\tau_{1}\in\mathbb{N}$. Then for $\tau(m)\in\hat{G}$ with highest 
weight $(m+\tau_{1})e_{1}+me_{2}$ as in \eqref{Darstellungen von G} it follows 
that
\begin{align*}
&\sum_{k=0}^{1}(-1)^{k}\int_{0}^{\lambda_{\tau(m),k}}P_{\sigma_{\tau(m),k}}
(t)dt\\
=&-\frac{1}{4\pi^{2}}\left(\int_{0}^{\tau_{1}+m+1}(t^{2}-m^{2})dt
-\int_{0}^{m}(t^{2}-(m+\tau_{1}+1)^{2})dt\right)\\
=&-\frac{1}{4\pi^{2}}\left(2m^{2}(\tau_{1}+1)+2m(\tau_{1}+1)^{2}
+\frac{1}{3}(\tau_{1}+1)^{3}\right).
\end{align*}
Together with \eqref{Explizite Form der Konstanten} this gives
\begin{align}\label{Letzte Gleichung}
P_{\tau}(m)=-\frac{{\rm{vol}}(X)}{2\pi}\left(2m^{2}(\tau_{1}+1)+
2m(\tau_{1}+1)^{2}+\frac{1}{3}(\tau_{1}+1)^{3}\right).
\end{align}
Now if $\tau_{1}=0$, in the notation of \cite{Mu2} the representation 
$\tau(m)$ corresponds to the representation $\tau_{2m}$. Hence 
\eqref{Letzte Gleichung} is consistent with Theorem 1.1 in \cite{Mu2}. 
\end{bmrk}

\end{document}